\theoremstyle{plain}
 \newtheorem{thm}{\protect\theoremname}[section]
  \theoremstyle{definition}
  \newtheorem{defn}[thm]{\protect\definitionname}
  \theoremstyle{plain}
  \newtheorem{lem}[thm]{\protect\lemmaname}
  \theoremstyle{plain}
  \newtheorem{question}{\protect\questionname}
  \theoremstyle{plain}
  \newtheorem{cor}[thm]{\protect\corollaryname}
  \theoremstyle{remark}
  \newtheorem{rem}[thm]{\protect\remarkname}
  \theoremstyle{remark}
  \newtheorem{claim}[thm]{\protect\claimname}
\DeclareMathOperator{\acc}{acc}
\DeclareMathOperator{\dom}{dom}
\DeclareMathOperator{\otp}{otp}
\DeclareMathOperator{\cf}{cf}
\DeclareMathOperator{\Refl}{\mathrm{Refl}}
\DeclareMathOperator{\Col}{Col}
\newcommand{\ZFC}{{\rm ZFC}}
\newcommand{\GCH}{{\rm GCH}}
\newcommand{\PFA}{{\rm PFA}}
\providecommand{\claimname}{Claim}
\providecommand{\corollaryname}{Corollary}
\providecommand{\definitionname}{Definition}
\providecommand{\lemmaname}{Lemma}
\providecommand{\questionname}{Question}
\providecommand{\remarkname}{Remark}
\providecommand{\theoremname}{Theorem}
\begin{document}

\title{Simultaneous stationary reflection and square sequences}
\author{Yair Hayut}
\address{School of Mathematical Sciences, Tel Aviv University\\ 
Tel Aviv, 69978, Israel}
\email{yair.hayut@math.huji.ac.il}
\author{Chris Lambie-Hanson}
\address{Department of Mathematics, Bar-Ilan University \\ 
Ramat Gan, 5290002, Israel}
\email{lambiec@macs.biu.ac.il}
\thanks{This research was undertaken while the second author was a Lady Davis 
  Postdoctoral Fellow. The author would like to thank the Lady Davis Fellowship 
Trust and the Hebrew University of Jerusalem.}
\keywords{square principles, stationary reflection, forcing, large cardinals}
\subjclass[2010]{03E05, 03E35, 03E55}
\begin{abstract}
We investigate the relationship between weak square principles and simultaneous reflection of stationary sets.
\end{abstract}
\maketitle

\section{Introduction}
The investigation of the tension between compactness and incompactness phenomena in set theory has been a fruitful line of research, touching on aspects of large cardinals, inner model theory, combinatorial set theory, and cardinal arithmetic. Prominent among incompactness principles in set theory are square principles. Square principles were first introduced by Jensen, and variations have been defined by a number of researchers since then. A notable weakening, which we study here, is due to Todorcevic. On the other hand, stationary reflection is an important compactness phenomenon. It was known early on that Jensen's original square principle is incompatible with stationary reflection. Subsequent work, particularly \cite{Magidor-Cummings-Foreman-Squares}, indicates that weakenings of Jensen's square principle are compatible with certain forms of stationary reflection but preclude stronger forms of simultaneous stationary reflection. 
In \cite{FontanellaHayut}, Fontanella and the first author show that Todorcevic's square principle $\square(\kappa)$ is compatible with a strong form of 
stationary reflection, known as \emph{Delta reflection}, at $\aleph_{\omega^2 + 1}$. In this paper, we continue this line of research by considering the relationship between a hierarchy 
of weakenings of Todorcevic's square principle and simultaneous stationary reflection. 

Our notation is for the most part standard. We use \cite{jech} as a standard reference 
for undefined terms. If $A$ is a set of ordinals, then $\otp(A)$ denotes the order type 
of $A$ and $\acc(A)$ denotes the set of accumulation points of $A$ below the strong supremum 
of $A$, i.e. the set $\{\beta \mid \beta < \sup(\{\alpha + 1 \mid \alpha \in A\})$ and $\beta = 
\sup(A \cap \beta)$\}. If $\kappa < \lambda$ are infinite cardinals, with $\kappa$ regular, then 
$S^\lambda_\kappa = \{\alpha < \lambda \mid \cf(\alpha) = \kappa\}$. $S^\lambda_{<\kappa}$, 
$S^\lambda_{\geq \kappa}$, and similar expressions are interpreted in the obvious way.

We start by recalling the definition of Jensen's original square principle.

\begin{defn}
\cite{Jensen-FineStructure} Let $\kappa$ be a cardinal. We say that $\langle C_{\alpha}\mid\alpha<\kappa^{+}\rangle$ is a $\square_{\kappa}$-sequence if 
the following hold.
\begin{enumerate}
\item For all $\alpha < \kappa^+$, $C_\alpha$ is a club in $\alpha$.
\item (Coherence) For all $\beta < \kappa^+$ and all $\alpha\in\acc(C_{\beta})$, $C_{\beta}\cap\alpha=C_{\alpha}$. 
\item For all $\alpha < \kappa^+$, $\otp(C_{\alpha})\leq\kappa$.
\end{enumerate}
$\square_\kappa$ is the assertion that a $\square_\kappa$-sequence exists. We say that a sequence is coherent if it satisfies $(1)$ and $(2)$. In this case we say that $\kappa^{+}$ is the 
\emph{length} of the sequence.
\end{defn}

\begin{rem}
For our purposes, we may assume that, for every $\alpha < \kappa^+$, $C_{\alpha+1}=\{\alpha\}$.
\end{rem}

We can think of $\square_{\kappa}$ as a coherent way of witnessing the singularity of every ordinal between $\kappa$ and $\kappa^{+}$. This coherence allows us to build various objects by induction. It also witnesses the incompactness of $\kappa^{+}$, since it is impossible to extend the sequence to a coherent sequence of length $\kappa^{+}+1$ without collapsing $\kappa^+$. From $\square_\kappa$, one can obtain more natural incompactness phenomena, such as the existence of a non-free Abelian group of cardinality $\kappa^+$ such that all smaller subgroups are free \cite{MagidorShelah}, 
the existence of a non-metrizable first-countable topological space such that all smaller subspaces are metrizable (see \cite{magidor_ast}), and others.  

Square sequences appear naturally in core models. In fact, the square principle was isolated from the investigation of the structure of the constructible universe. 
\begin{thm}
\cite{Jensen-FineStructure} Assume $V=L$. For every cardinal $\kappa$, $\square_{\kappa}$ holds.
\end{thm}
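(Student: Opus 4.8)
The plan is to prove this fine-structurally, building the sequence $\langle C_\alpha \mid \alpha < \kappa^+\rangle$ directly from the constructible hierarchy and reading off the three clauses from condensation. Fix $\kappa$. By the Remark and trivial choices below $\kappa$, it suffices to define $C_\alpha$ for limit ordinals $\alpha \in (\kappa, \kappa^+)$. To each such $\alpha$ I attach a \emph{canonical singularizing structure}. Since $V = L$ and $\alpha < \kappa^+$, $\alpha$ has cardinality $\kappa$ in $L$, so there is a least ordinal $\beta(\alpha)$ with $L_{\beta(\alpha)} \models |\alpha| \le \kappa$; by minimality of $\beta(\alpha)$ the witnessing surjection is new at this level, which, via the fine-structure theory, forces the $\Sigma_n$ projectum of $L_{\beta(\alpha)}$ to drop to $\le \kappa$ for some $n$. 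Let $n(\alpha)$ be the least such $n$ and let $p(\alpha)$ be the corresponding standard parameter; then there is a $\Sigma_{n(\alpha)}$-definable (from $p(\alpha)$) partial surjection $h_\alpha$ from $\kappa$ onto $L_{\beta(\alpha)}$.

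Using $h_\alpha$ I would build a continuous increasing chain of $\Sigma_{n(\alpha)}$-elementary Skolem hulls indexed by $\kappa$: set $X_\nu = \mathrm{Hull}^{\Sigma_{n(\alpha)}}_{L_{\beta(\alpha)}}(\{h_\alpha(\xi) \mid \xi < \nu\} \cup \{p(\alpha)\})$ for $\nu < \kappa$, taking unions at limits. Define $C_\alpha = \{\bar\alpha < \alpha \mid \text{there is } \nu < \kappa \text{ with } X_\nu \cap \alpha = \bar\alpha\}$, i.e. the set of stages at which the trace of the hull on $\alpha$ is a proper initial segment. Two of the three clauses are then essentially bookkeeping. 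The map $\nu \mapsto X_\nu \cap \alpha$ is $\subseteq$-increasing, so the ordinals occurring as values increase with $\nu$; as the index set is $\kappa$ this forces $\otp(C_\alpha) \le \kappa$, giving clause (3). For clause (1), closure of $C_\alpha$ follows from continuity of $\nu \mapsto X_\nu$ at limit stages, and unboundedness from the surjectivity of $h_\alpha$, so that $\bigcup_{\nu < \kappa}(X_\nu \cap \alpha) = \alpha$ while a standard closure argument produces cofinally many stages at which $X_\nu \cap \alpha$ is an initial segment.

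The heart of the matter, and the step I expect to be the main obstacle, is the coherence clause (2), which is exactly where condensation and the minimality built into the canonical data are needed. Suppose $\bar\alpha \in \acc(C_\alpha)$; then $\bar\alpha = X_{\nu^*} \cap \alpha$ for a limit stage $\nu^*$, and $X_{\nu^*} \prec_{\Sigma_{n(\alpha)}} L_{\beta(\alpha)}$. Let $\pi \colon X_{\nu^*} \to L_{\bar\beta}$ be the transitive collapse; by condensation $L_{\bar\beta}$ is a genuine level of $L$, and $\pi$ fixes every ordinal below $\bar\alpha$ (in particular it fixes $\kappa$, since $\kappa < \bar\alpha$), sends $\alpha$ to $\bar\alpha$, and carries $h_\alpha, p(\alpha)$ to a $\Sigma_{n(\alpha)}$-definable surjection of $\kappa$ onto $L_{\bar\beta}$ and a parameter $\bar p$. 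The crucial claim to verify is that this collapsed data is \emph{precisely} the canonical singularizing structure of $\bar\alpha$, namely $\bar\beta = \beta(\bar\alpha)$, $n(\alpha) = n(\bar\alpha)$, and $\bar p = p(\bar\alpha)$. This is the delicate fine-structural point: one must argue that the minimality of $\beta(\bar\alpha)$, of $n(\bar\alpha)$, and of the standard parameter is inherited under the collapse, so that no smaller singularizing level or simpler parameter for $\bar\alpha$ can exist without contradicting the minimality of the corresponding choice for $\alpha$. Once this identification is in place, the hull stages $\langle X_\nu \mid \nu < \nu^* \rangle$ collapse exactly to the stages defining $C_{\bar\alpha}$, whence $C_\alpha \cap \bar\alpha = C_{\bar\alpha}$, as required. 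The stability and preservation of the standard parameter under condensation is exactly what the full machinery of Jensen's fine structure is designed to guarantee, and it is the technical core on which the whole construction rests.
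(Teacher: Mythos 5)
The paper does not prove this theorem at all --- it is quoted from Jensen's fine structure paper with a citation and used as a black box --- so there is no proof of record to compare yours against; I can only assess the sketch on its own terms. What you have written is a correct outline of the standard Jensen construction (canonical singularizing level, projectum, standard parameter, Skolem hulls, condensation), and you have correctly located where the difficulty lives. But as a proof it has a genuine gap, and you say so yourself: the entire technical content of the theorem is the claim that the collapsed data $(\bar\beta, n(\alpha), \bar p)$ coincides with the canonical data $(\beta(\bar\alpha), n(\bar\alpha), p(\bar\alpha))$ of $\bar\alpha$, and you assert that this ``is exactly what the full machinery of Jensen's fine structure is designed to guarantee'' without carrying out any of it. That preservation statement --- that the projectum and the standard parameter are not moved by the transitive collapse of a $\Sigma_{n(\alpha)}$ hull, in both directions (no smaller singularizing level and no $<^*$-smaller parameter for $\bar\alpha$ can appear) --- is the theorem; deferring it to ``the machinery'' leaves the proof empty at its core. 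In particular, the downward direction (why can $\bar\alpha$ not be singularized strictly below $\bar\beta$, or with a smaller parameter?) requires the solidity/soundness analysis of the standard parameter and is not a formality.

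Two further points need repair even in the outline. First, your $\beta(\alpha)$ is the least $\beta$ with $L_\beta \models |\alpha| \le \kappa$, so the witnessing surjection is an \emph{element} of $L_{\beta(\alpha)}$; by minimality $\beta(\alpha)$ is a successor $\beta'+1$ and it is the projectum of $L_{\beta'}$ (over which the surjection is first \emph{definable}) that drops to $\le\kappa$, not that of $L_{\beta(\alpha)}$, whose projectum can remain large. The structure you should be doing fine structure on is the least level \emph{over which} $\alpha$ is collapsed, not the least level containing the collapse. Second, for $n(\alpha) > 1$ a $\Sigma_{n(\alpha)}$-elementary hull of $L_{\beta}$ does not condense to a level of $L$ by na\"{\i}ve condensation; one must pass to the $n$th reduct (master code) structure and take $\Sigma_1$ hulls there, and it is precisely this reduction that makes the identification of the collapsed data with the canonical data of $\bar\alpha$ provable. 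Without these ingredients the coherence step cannot be completed, so the proposal should be regarded as a plan rather than a proof.
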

This result was widely generalized, and the best known result today is the following, due to Zeman.
\begin{thm}
\cite{ZemanSquares} Assume $V=K$, where $K$ is the Mitchell-Steel core model. Then, for every cardinal $\kappa$, $\square_{\kappa}$ holds.
\end{thm}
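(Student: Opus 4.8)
The plan is to follow the fine-structural strategy by which Jensen originally established $\square_\kappa$ in $L$, adapting each ingredient to the extender models of Mitchell and Steel. Write the hierarchy of $K$ as $\langle J_\nu^{\vec E} \mid \nu \in \mathrm{Ord}\rangle$, where $\vec E$ is the extender sequence, and fix a cardinal $\kappa$. For each limit ordinal $\alpha$ with $\kappa < \alpha < \kappa^+$ I would isolate the least level $\mathcal{M}(\alpha) = J_{\nu(\alpha)}^{\vec E}$ over which $\alpha$ ceases to be a cardinal, i.e. over which there is a $\Sigma_{n(\alpha)}$-definable (from the standard parameter $p$) surjection of some $\bar\kappa \le \kappa$ onto $\alpha$; here $n(\alpha)$ is the least fine-structural degree at which such a map appears, so that $\alpha$ is a cardinal in $\mathcal{M}(\alpha)$ but the $\Sigma_{n(\alpha)}$-projectum drops to or below $\alpha$. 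The club $C_\alpha$ is then read off from the $\Sigma_{n(\alpha)}$ Skolem hulls of $\alpha$ in $\mathcal{M}(\alpha)$ over $p$: letting $H_\beta$ be the hull generated by $\beta \cup \{p\}$, I set $C_\alpha$ to be the set of $\beta < \alpha$ such that $H_\beta \cap \alpha = \beta$ and the transitive collapse of $H_\beta$ is again a sound level of the hierarchy projecting correctly to $\bar\kappa$.

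Once $C_\alpha$ is defined this way, the three clauses of a $\square_\kappa$ sequence are verified as follows. Clause (3), $\otp(C_\alpha) \le \kappa$, follows because the elements of $C_\alpha$ are parametrized by an increasing continuous chain of hulls each of size $\le \kappa$, using that the $\Sigma_{n(\alpha)}$-projectum of $\mathcal{M}(\alpha)$ is at most $\kappa$. Clause (1), clubness, is immediate from the continuity of $\beta \mapsto H_\beta$ at limits. The heart of the matter is clause (2), coherence: if $\bar\alpha \in \acc(C_\alpha)$, I would show that the transitive collapse of $H_{\bar\alpha}$ is precisely $\mathcal{M}(\bar\alpha)$ and that the collapse commutes with the hull construction, so that the canonical club read off from the collapsed structure is exactly $C_\alpha \cap \bar\alpha$, while by the very definition of $\mathcal{M}(\bar\alpha)$ that same club is $C_{\bar\alpha}$. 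This is precisely where a condensation lemma is needed: one must know that collapsing a $\Sigma_{n(\alpha)}$ hull of a level of $K$ returns a genuine level of $K$ with the expected projectum and standard parameter.

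The main obstacle -- and the reason the theorem is substantially harder than in $L$ -- is exactly this condensation step in the presence of extenders. Condensation can fail outright: the transitive collapse of a hull of $J_\nu^{\vec E}$ need not be an initial segment of $K$, but may instead be a level of an ultrapower of an initial segment, a \emph{protomouse}, an anomaly with no analogue in $L$. To handle this I would invoke the Mitchell--Steel fine-structural condensation lemma together with the solidity and universality of standard parameters, which guarantee that in the relevant range the collapse does land inside $K$, and I would treat separately the configurations in which the critical point of an extender applied along the construction is near $\alpha$ (so that the singularizing map is genuinely extender-induced rather than first-order definable), adjusting the definition of $C_\alpha$ to break along the extender's generators. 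Assembling these cases into a uniform, coherent assignment $\alpha \mapsto C_\alpha$, and verifying that the anomalous protomouse cases still cohere with the regular ones, is where essentially all the work lies; the order-type and clubness clauses are comparatively routine once the definition survives condensation.
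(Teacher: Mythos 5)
The paper does not prove this theorem: it is quoted from Zeman's \emph{Global square sequences in extender models} as a known black box, so there is no internal argument to compare you against. Judged on its own, your outline correctly names the classical Jensen strategy (singularizing levels, Skolem hulls, condensation) and, importantly, the correct obstruction in Mitchell--Steel models, namely that the collapse of a hull of a level of $K$ may be a protomouse rather than a level of $K$. But what you have written is a plan, not a proof, and the step you explicitly defer --- the protomouse case analysis and its interaction with coherence --- is, by your own admission, ``where essentially all the work lies.'' That is not a patchable afterthought: Zeman's argument requires a uniform global case division according to whether the singularizing structure is an honest level or a protomouse, auxiliary clubs attached to the protomouse configurations that must be shown to cohere with those attached to honest levels, and the full solidity and universality machinery to control standard parameters under collapse. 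None of this is supplied, so the proposal has a genuine gap at exactly the point where the theorem differs from Jensen's result in $L$.

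There is also a concrete error in the part you call routine. You take $H_\beta$ to be the $\Sigma_{n(\alpha)}$ hull of $\beta\cup\{p\}$ at the singularizing degree, with $p$ the standard parameter, and set $C_\alpha=\{\beta<\alpha\mid H_\beta\cap\alpha=\beta\}$. By soundness, $h_{n(\alpha)}(\rho\cup\{p\})$ is all of $\mathcal{M}(\alpha)$ once $\rho$ is the relevant projectum, and $\rho\le\kappa<\alpha$; hence for every $\beta$ with $\rho\le\beta<\alpha$ one gets $H_\beta\cap\alpha=\alpha\neq\beta$, so your $C_\alpha$ is contained in $\rho+1$ and is bounded in $\alpha$ --- clause (1) fails outright. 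Relatedly, your justification of clause (3) conflates cardinality with order type: knowing each hull has size $\le\kappa$ only bounds $|C_\alpha|$, and a club in an ordinal of order type, say, $\kappa\cdot 2$ can have order type $\kappa\cdot 2>\kappa$. The standard repair is to parametrize by ordinals below the projectum, taking $C_\alpha$ to be (the closure in $\alpha$ of) $\{\sup(h(\gamma\cup\{p\})\cap\alpha)\mid\gamma<\rho\}$, which is cofinal in $\alpha$ by soundness and has order type $\le\rho\le\kappa$; but this reparametrization must then be threaded through the coherence and condensation arguments, which changes the bookkeeping you sketched. So even before reaching the protomouse difficulties, the definition of the sequence needs to be rebuilt.
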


These results can be shown to hold for a various class of models of the form $L[E]$ under some standard assumptions (see \cite{schimmerling-zeman}), and thus they are expected to be true also in inner models for any large cardinal assumption below subcompact. 

In these results, the square sequences are global, which means that one can define a coherent sequence over all the singular ordinals and derive from it the square sequences for each cardinal $\kappa$. The existence of a global square sequence in the core model implies that, in order to get a model in which $\square_\kappa$ fails, we need a model in which the set of $K$-singular ordinals below $(\kappa^{+})^{V}$ does not contain a club, or, in other words, that $(\kappa^{+})^{V}$ is a Mahlo cardinal in $K$. Solovay proved that, indeed, for regular $\kappa$, this is the exact large cardinal axiom needed (see \cite{kanamori-sets-and-extensions}). On the other hand the situation at successors of singular cardinals is more complex. In the absence of Woodin cardinals, the Weak Covering Lemma, which states that, for every singular cardinal $\kappa$, $(\kappa^{+})^{V}=(\kappa^{+})^{K}$, holds, and therefore $\square_{\kappa}$ holds for every singular $\kappa$. Some stronger results appear in \cite{sargsyan2014nontame}. The upper bound for the failure of $\square_{\kappa}$ for singular $\kappa$ is a measurable subcompact cardinal. The notion of subcompact was defined by Jensen as a weakening of a supercompact cardinal.

Square principles have many combinatorial consequences; the one that will be most relevant for us is the failure of stationary reflection. 
\begin{defn}
Let $\lambda$ be a regular cardinal.
\begin{enumerate}
\item{Suppose $S \subseteq \lambda$ is a stationary set and $\alpha < \lambda$. $S$ \emph{reflects at $\alpha$} if $\cf(\alpha) > \omega$ and $S\cap\alpha$ is stationary in $\alpha$. $S$ \emph{reflects} if there is $\alpha < \lambda$ such that $S$ reflects at $\alpha$.}
\item{Suppose $\mathcal{S}$ be a collection of stationary subsets of $\lambda$ and $\alpha < \lambda$. $\mathcal{S}$ \emph{reflects simultaneously at $\alpha$} if, for all $S\in\mathcal{S}$, $S$ reflects at $\alpha$. $\mathcal{S}$ \emph{reflects simultaneously} if there is $\alpha < \lambda$ such that $\mathcal{S}$ reflects simultaneously at $\alpha$. If $S_0$ and $S_1$ are stationary sets, then we say $S_0$ and $S_1$ reflect simultaneously if $\{S_0, S_1\}$ reflects simultaneously.}
\item{Suppose $S \subseteq \lambda$ is stationary and $\kappa$ is a cardinal. $\Refl(< \kappa, S)$ is the statement asserting that, whenever $\mathcal{S}$ is a collection of stationary subsets of $S$ and $|\mathcal{S}| < \kappa$, $\mathcal{S}$ reflects simultaneously. $\Refl(< \kappa^+, S)$ will typically be written as $\Refl(\kappa, S)$, and $\Refl(1, S)$ will be written as $\Refl(S)$.}
\end{enumerate}
\end{defn}
Note that, if $S \subseteq \lambda$ does not reflect at stationarily many ordinals below $\lambda$, then there is a club $C \subseteq \lambda$ such that $C \cap S$ does not reflect. Thus, if $\Refl(< \kappa, S)$ holds, then every collection $\mathcal{S}$ of stationary subsets of $S$ with $|\mathcal{S}| < \kappa$ reflects simultaneously at stationarily many $\alpha < \lambda$.

The following well known theorem demonstrates the connection between square principles and stationary reflection (see \cite{Magidor-Cummings-Foreman-Squares}).
\begin{thm}
\label{thm:non refl square_kappa} Assume $\square_{\kappa}$. Then $\Refl(S)$ fails for every stationary $S \subseteq \kappa^+$.
\end{thm}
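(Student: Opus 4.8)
The plan is to exploit the coherence of a $\square_\kappa$-sequence to partition the limit ordinals below $\kappa^+$ into few non-reflecting pieces, indexed by order type, and then to find the desired non-reflecting subset of $S$ inside one of these pieces by a pigeonhole argument.

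First I would fix a $\square_\kappa$-sequence $\langle C_\alpha \mid \alpha < \kappa^+\rangle$ and, for each limit $\alpha < \kappa^+$, set $\rho(\alpha) = \otp(C_\alpha)$, so that $\rho(\alpha) \leq \kappa$ by clause (3); for $\gamma \leq \kappa$ let $T_\gamma = \{\alpha < \kappa^+ \mid \alpha \text{ limit and } \rho(\alpha) = \gamma\}$. The key observation, coming from coherence, is that $\rho$ is \emph{strictly increasing} along the accumulation points of any fixed $C_\alpha$: if $\beta < \beta'$ both lie in $\acc(C_\alpha)$, then clause (2) gives $C_\beta = C_\alpha \cap \beta$ and $C_{\beta'} = C_\alpha \cap \beta'$, whence $\rho(\beta) = \otp(C_\alpha \cap \beta) < \otp(C_\alpha \cap \beta') = \rho(\beta')$. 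In particular $\rho$ is injective on $\acc(C_\alpha)$, so $|T_\gamma \cap \acc(C_\alpha)| \leq 1$ for every $\alpha$ and every $\gamma$.

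The next step converts this into non-reflection of each $T_\gamma$. Suppose toward a contradiction that $T_\gamma$ reflects at some $\alpha$, i.e. $\cf(\alpha) > \omega$ and $T_\gamma \cap \alpha$ is stationary in $\alpha$. Since $\cf(\alpha) > \omega$, the accumulation points $\acc(C_\alpha)$ of the club $C_\alpha$ form a club in $\alpha$ (the relevant point being that a countable supremum of elements of $C_\alpha$ remains below $\alpha$). Then $T_\gamma \cap \acc(C_\alpha)$ is the intersection of a stationary set with a club, hence stationary and in particular unbounded in $\alpha$, contradicting $|T_\gamma \cap \acc(C_\alpha)| \leq 1$. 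Thus no $T_\gamma$ reflects, and consequently neither does any stationary subset of a $T_\gamma$ (a reflection point of a subset would be one for the superset).

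Finally, given an arbitrary stationary $S \subseteq \kappa^+$, after intersecting with the club of limit ordinals I may assume $S$ consists of limit ordinals, so $S = \bigcup_{\gamma \leq \kappa}(S \cap T_\gamma)$. As this is a union of at most $\kappa$ sets and $\kappa^+ = \cf(\kappa^+)$ is regular, some $T = S \cap T_\gamma$ must be stationary; this $T \subseteq S$ is stationary and, being a subset of the non-reflecting $T_\gamma$, does not reflect, so $\Refl(S)$ fails. The one place that demands genuine care, rather than bookkeeping, is the passage from "at most one accumulation point" to an actual contradiction with reflection: this is precisely where the hypothesis $\cf(\alpha) > \omega$ built into the definition of reflection is essential, since it is what upgrades $\acc(C_\alpha)$ from merely unbounded to club in $\alpha$.
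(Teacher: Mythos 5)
Your proof is correct and follows essentially the same route as the paper's: both fix the order type $\otp(C_\alpha)$ on a stationary set (you by partitioning into the $T_\gamma$ and applying the pigeonhole principle via $\kappa^+$-completeness of the nonstationary ideal, the paper by applying Fodor's Lemma directly to the regressive map $\alpha \mapsto \otp(C_\alpha)$) and then use coherence to see that such a set meets $\acc(C_\beta)$ in at most one point, hence cannot reflect. The extra care you take in noting that $\acc(C_\alpha)$ is club in $\alpha$ precisely because $\cf(\alpha)>\omega$ is a correct and welcome elaboration of a step the paper leaves implicit.
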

\begin{proof}
Let $\langle C_{\alpha}\mid\alpha<\kappa^{+}\rangle$ be a $\square_\kappa$-sequence and fix a stationary $S \subseteq \kappa^+$. Using Fodor's Lemma, find a stationary $S^{\prime}\subseteq S$ and $\delta\leq\kappa$ such that, for every $\alpha\in S^{\prime}$, $\otp (C_{\alpha})=\delta$. For every $\beta < \kappa^+$, $S^\prime \cap \acc(C_{\beta})$ contains at most one point, since if $\alpha\in S^\prime \cap \acc(C_{\beta})$, then $C_{\beta}\cap\alpha=C_{\alpha}$, and therefore $\otp(C_{\beta}\cap\alpha)=\delta$. Thus, $S'$ does not reflect, so $\Refl(S)$ fails.
\end{proof}
The following generalization of Jensen's square principle is due to Schimmerling \cite{SchimmerlingWeakerSquares}:
\begin{defn}
Let $\kappa$ and $\eta$ be cardinals. A sequence $\mathcal{C}=\langle\mathcal{C}_{\alpha}\mid\alpha<\kappa^{+}\rangle$ is a $\square_{\kappa,<\eta}$-sequence if: 
\begin{enumerate}
\item For all $\alpha < \kappa^+$, $\mathcal{C}_{\alpha}$ is a non-empty set of clubs in $\alpha$ and $|\mathcal{C}_\alpha| < \eta$. 
\item For all $\beta < \kappa^+$, $C \in \mathcal{C}_\beta$, and $\alpha\in\acc(C)$, $C\cap\alpha\in\mathcal{C}_{\alpha}$.
\item For all $\alpha < \kappa^+$ and $C\in\mathcal{C}_{\alpha}$, $\otp(C)\leq\kappa$.
\end{enumerate}
$\square_{\kappa, <\eta}$ is the assertion that there is a $\square_{\kappa, < \eta}$-sequence. We say that $\mathcal{C}$ is a coherent sequence of width $<\eta$ and length $\kappa^+$ if it satisfies conditions $1$ and $2$.
\end{defn}
This definition provides us a strict hierarchy of combinatorial principles. $\square_{\kappa,<\eta}\implies\square_{\kappa,<\eta^{\prime}}$ for every $1\leq\eta<\eta^{\prime}$, but it is consistent, relative to large cardinals, that $\neg\square_{\lambda,<\eta}\wedge\square_{\lambda,<\eta^{\prime}}$ holds (See Jensen \cite{jensen_separating_squares} for successors of regular cardinals, and Cummings, Foreman, and Magidor \cite{Magidor-Cummings-Foreman-Squares} for
successors of singulars). We denote by $\square_{\lambda,\eta}$ the principle $\square_{\lambda,<\eta^{+}}$, so $\square_{\lambda}$ is the same as $\square_{\lambda,1}$.

The principle $\square_{\lambda,\lambda}$ is called \emph{weak square} and is equivalent to the existence of a special $\lambda^+$-Aronszajn tree. The principle $\square_{\lambda,\lambda^{+}}$ is called \emph{silly square} and is provable in ZFC (see \cite[Chapter 2]{shelah1994cardinal-arithmetic}).

These weaker square principles also have an impact on stationary reflection. The following theorems are from \cite{Magidor-Cummings-Foreman-Squares}.
\begin{thm}
If $\kappa$ is an infinite cardinal and $\square_{\kappa,<\cf(\kappa)}$ holds, then $\Refl(S)$ fails for every stationary $S \subseteq \kappa^+$. 
\end{thm}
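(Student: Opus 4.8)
The plan is to generalize the proof of Theorem~\ref{thm:non refl square_kappa}. Fix a $\square_{\kappa,<\cf(\kappa)}$ sequence $\mathcal{C}=\langle\mathcal{C}_\alpha\mid\alpha<\kappa^+\rangle$ and write $\mu=\cf(\kappa)$, so that $|\mathcal{C}_\alpha|<\mu$ for every $\alpha$. For each limit $\alpha$ let $O_\alpha=\{\otp(C)\mid C\in\mathcal{C}_\alpha\}\subseteq\kappa+1$; since $|O_\alpha|\le|\mathcal{C}_\alpha|<\mu=\cf(\kappa)$, the set $O_\alpha$ is never cofinal in $\kappa$. Given a stationary $S\subseteq\kappa^+$, I want to produce a stationary $S'\subseteq S$ that does not reflect. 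First I would thin $S$: partitioning by cofinality, I may assume $S\subseteq S^{\kappa^+}_\theta$ for a single regular $\theta\le\kappa$, and then, using that the nonstationary ideal on $\kappa^+$ is $\kappa^+$-complete, apply a pressing-down (pigeonhole over the at most $\kappa+1$ possible order types) to stabilize a chosen order-type invariant of $O_\alpha$ on a stationary subset.

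The mechanism I want to exploit is the one underlying Theorem~\ref{thm:non refl square_kappa}: if $S'$ reflects at $\gamma$ with $\cf(\gamma)>\omega$ and $C\in\mathcal{C}_\gamma$, then $\acc(C)$ is club in $\gamma$, so $S'\cap\acc(C)$ is stationary in $\gamma$; and for $\alpha\in\acc(C)$, coherence gives $C\cap\alpha\in\mathcal{C}_\alpha$, while $\alpha\mapsto\otp(C\cap\alpha)$ is strictly increasing. Hence any fixed order type is attained at most once along each single club $C$. If I can arrange that membership of $\alpha$ in $S'$ confines $\otp(C\cap\alpha)$ to a small set of possible values, then $S'\cap\acc(C)$ is forced to be small, contradicting stationarity. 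When $\cf(\kappa)=\omega$ this is immediate: then $|\mathcal{C}_\alpha|$ is finite, so after stabilizing both the size and the finitely many order types of $\mathcal{C}_\alpha$, the set $O_\alpha$ equals a fixed finite set $F$ on a stationary $S'$; since $\otp(C\cap\alpha)\in O_\alpha=F$, the trace $S'\cap\acc(C)$ has at most $|F|$ elements, contradicting $\cf(\gamma)>\omega$.

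The main obstacle is the case $\cf(\kappa)>\omega$, where $\mathcal{C}_\alpha$ may be infinite and there are more than $\kappa$ possible sets $O_\alpha$, so one cannot pin down $O_\alpha$ by pigeonhole. The difficulty is the decoupling between the invariant stabilized on $S'$ (for instance ``$\delta\in O_\alpha$'' or ``$\min O_\alpha=\delta$'') and the order type $\otp(C\cap\alpha)$ of the coherent restriction actually appearing at a reflection point: a single fixed order type need not be the order type of $C\cap\alpha$, since $\mathcal{C}_\alpha$ may contain additional, unrelated clubs. My plan to break this is to use $\mu=\cf(\kappa)$ exactly where it enters: at a reflection point $\gamma$ with $\cf(\gamma)\ge\mu$ the intersection $\bigcap_{C\in\mathcal{C}_\gamma}\acc(C)$ is an intersection of fewer than $\cf(\gamma)$ clubs, hence club in $\gamma$, so on a club of $\alpha<\gamma$ one has $C\cap\alpha\in\mathcal{C}_\alpha$ simultaneously for all $C\in\mathcal{C}_\gamma$; a further pressing-down over the fewer than $\cf(\gamma)$ clubs then selects a single coherent club and reduces the situation to the single-square picture of Theorem~\ref{thm:non refl square_kappa}. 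To guarantee that only such reflection points are relevant, I would argue for contradiction from $\Refl(S)$: choosing the stabilized invariant minimal and using that $\Refl(S)$ forces $S'$ to reflect at \emph{stationarily} many points (by the remark following the definition of reflection), I expect to contradict minimality by producing, at these reflection points, a stationary set realizing a strictly smaller invariant. The delicate point — and where I expect the real work to lie — is handling reflection points of cofinality below $\cf(\kappa)$, for which the intersection argument is unavailable and the aggregation and minimality bookkeeping must be set up with care.
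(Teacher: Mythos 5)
Your reduction of the case $\cf(\kappa)=\omega$ is correct and complete, but the general case --- which is the actual content of the theorem --- is left open in your write-up: you explicitly defer ``the real work'' of handling reflection points of cofinality below $\cf(\kappa)$, and the strategy you sketch (a dichotomy on $\cf(\gamma)$ versus $\cf(\kappa)$, intersecting the fewer than $\cf(\kappa)$ clubs of $\mathcal{C}_\gamma$, and an unspecified minimality bookkeeping) is not carried out and does not obviously close up; in particular, when $\cf(\gamma)<\cf(\kappa)$ the intersection $\bigcap_{C\in\mathcal{C}_\gamma}\acc(C)$ need not be club in $\gamma$ and nothing in the proposal replaces it. (The paper itself contains no proof of this statement --- it is quoted from Cummings--Foreman--Magidor --- but the standard argument is short and avoids your case analysis entirely.)

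The idea you are missing is that one need not confine $O_\alpha$ to a \emph{fixed} small set of order types; it suffices to bound it, and then to apply Fodor's Lemma \emph{at the reflection point} instead of counting the trace. Concretely: since $|\mathcal{C}_\alpha|<\cf(\kappa)$, the set $O_\alpha\cap\kappa$ is bounded in $\kappa$ for every limit $\alpha$, so $\alpha\mapsto\sup(O_\alpha\cap\kappa)$ takes values in $\kappa$ and is constant, say equal to $\eta<\kappa$, on a stationary $S'\subseteq S$; shrink further so that $\min(S')>\eta$. If $S'$ reflected at some $\gamma$ with $\cf(\gamma)>\omega$, fix $C\in\mathcal{C}_\gamma$; then $T=S'\cap\acc(C)$ is stationary in $\gamma$, and for $\alpha\in T$ coherence gives $\otp(C\cap\alpha)\in O_\alpha$, while $\otp(C\cap\alpha)<\otp(C)\le\kappa$ forces $\otp(C\cap\alpha)\in O_\alpha\cap\kappa$, hence $\otp(C\cap\alpha)\le\eta<\alpha$. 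Thus $\alpha\mapsto\otp(C\cap\alpha)$ is regressive on the stationary set $T$, so by Fodor's Lemma (valid for any ordinal of uncountable cofinality) it is constant on a stationary subset of $\gamma$ --- contradicting the fact that it is strictly increasing on $\acc(C)$. This argument is uniform in $\cf(\gamma)$ and in $\cf(\kappa)$, so no dichotomy on the cofinality of the reflection point, and no exact stabilization of $O_\alpha$, is needed.
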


\begin{thm}
If $\cf(\kappa)=\omega$ and $\square_{\kappa,<\kappa}$ holds, then $\Refl(\aleph_0, S)$ fails for every stationary $S \subseteq \kappa^+$.
\end{thm}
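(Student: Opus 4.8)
The plan is to assume toward a contradiction that $\Refl(\aleph_0,S)$ holds for some stationary $S \subseteq \kappa^+$, to fix a $\square_{\kappa,<\kappa}$ sequence $\langle \mathcal{C}_\alpha \mid \alpha < \kappa^+\rangle$ together with an increasing sequence $\langle \kappa_n \mid n < \omega\rangle$ of regular cardinals cofinal in $\kappa$, and to manufacture a countable family of stationary subsets of $S$ with no common reflection point. The observation that drives everything is the following: if $\cf(\alpha) > \omega$ and $C \in \mathcal{C}_\alpha$, then $\otp(C) < \kappa$. Indeed $\cf(\otp(C)) = \cf(\alpha) > \omega$, whereas $\otp(C) \le \kappa$ and $\cf(\kappa) = \omega$, so $\otp(C) = \kappa$ is impossible. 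Thus at every potential reflection point the order types occurring in the square sequence are genuinely bounded below $\kappa$, and since $\kappa = \sup_n \kappa_n$ each such order type is dominated by some $\kappa_n$.

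With this in hand, the natural candidate family is $U_n = \{\beta \in S \mid \otp(C) \ge \kappa_n \text{ for all } C \in \mathcal{C}_\beta\}$. These never reflect simultaneously: given any $\alpha$ with $\cf(\alpha) > \omega$, pick any $C \in \mathcal{C}_\alpha$ and fix $n$ with $\otp(C) < \kappa_n$; for each $\beta \in \acc(C)$ coherence gives $C \cap \beta \in \mathcal{C}_\beta$ with $\otp(C\cap\beta) < \otp(C) < \kappa_n$, so $\beta \notin U_n$. Hence $U_n$ is disjoint from the club $\acc(C)$ of $\alpha$ and fails to reflect at $\alpha$. Consequently, if every $U_n$ were stationary, the family $\langle U_n \mid n<\omega\rangle$ would contradict $\Refl(\aleph_0,S)$, and we would be finished.

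The main obstacle is precisely that the $U_n$ need not all be stationary: nothing prevents the minimal order type $f(\beta) = \min\{\otp(C) \mid C \in \mathcal{C}_\beta\}$ from being bounded, indeed constant, on a club. To deal with this I would apply Fodor's Lemma to obtain a stationary $S_0 \subseteq S$ and an ordinal $\delta < \kappa$ with $f \equiv \delta$ on $S_0$, and then run a generalization of the proof of Theorem~\ref{thm:non refl square_kappa}. If some stationary $S^{\prime} \subseteq S_0$ has the property that every $\beta \in S^{\prime}$ realizes only finitely many order types in $\mathcal{C}_\beta$, then for any $\alpha$ and any $C \in \mathcal{C}_\alpha$ the map $\beta \mapsto \otp(C\cap\beta)$ is strictly increasing on $\acc(C)$ and can meet that finite set of values only finitely often; so $S^{\prime}$ does not reflect, contradicting $\Refl(S)$. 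Otherwise one repeats the $U_n$-analysis one level up, using the second-smallest order type in place of $f$ to pin down, on a stationary $S_1 \subseteq S_0$, the two smallest order types, and then iterates.

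The delicate point, and the step I expect to be the crux, is controlling this iteration. Each finite stage either produces a non-reflecting stationary set (hence a contradiction) or pins down one further order type $\delta_k < \kappa$ on a shrinking stationary set $S_k$, and the difficulty is to prevent the process from running harmlessly through all of $\omega$: the naive intersection $\bigcap_n S_n$ need not be stationary, so closing the argument requires a diagonal construction that exploits $\cf(\kappa) = \omega$ to extract, from the pinned order types $\delta_1 < \delta_2 < \cdots$, a single countable subfamily of subsets of $S$ that is simultaneously stationary yet has no common reflection point. Engineering this diagonalization, rather than the comparatively routine reflection computations above, is where the real content of the theorem lies.
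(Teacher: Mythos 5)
You should note at the outset that the paper does not actually prove this theorem --- it is quoted from \cite{Magidor-Cummings-Foreman-Squares} --- so I am measuring your proposal against the known argument rather than against a proof in the text. Your first two paragraphs are correct and constitute the easy half of that argument: at any candidate reflection point $\alpha$ every $C \in \mathcal{C}_\alpha$ has $\otp(C) < \kappa$, and the sets $U_n$ can never reflect simultaneously, so one may assume that some $U_n$ is non-stationary. (One small imprecision: in your ``finitely many order types'' branch, the finite set of realized order types depends on $\beta$, so before counting intersections with $\acc(C)$ you must first stabilize it to a single finite set on a stationary subset; this is harmless, since there are only $\kappa$ many finite subsets of $\kappa+1$.)

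The genuine gap is the remaining case, which you explicitly leave open, and the route you sketch for it does not lead to the proof. Two concrete problems. First, your proposal never invokes the hypothesis that the sequence has width $<\kappa$ --- you only ever use $\otp(C)\le\kappa$ --- yet that hypothesis is indispensable: $\square_{\kappa,\kappa^+}$ (silly square) is a theorem of $\ZFC$, while $\Refl(\aleph_0,S)$ for stationary $S\subseteq S^{\kappa^+}_{\omega}$ with $\cf(\kappa)=\omega$ is consistent (the paper itself forces much stronger reflection at $\aleph_{\omega+1}$), so no argument that ignores the width can succeed. Second, you are pressing down on the wrong quantity. What matters at a reflection point $\alpha$ with $D\in\mathcal{C}_\alpha$ and $\theta=\otp(D)$ is whether the set $O_\beta=\{\otp(C):C\in\mathcal{C}_\beta\}$ can be forced to miss a final segment of $\theta$ --- that is, the behaviour of $\sup(O_\beta\cap\,\cdot\,)$ --- not the least elements of $O_\beta$; pinning down the $k$ smallest order types for $k<\omega$ leaves $O_\beta$, which may have any cardinality below $\kappa$, completely uncontrolled where it counts, and this is why your diagonalization has nothing to bite on. The missing idea is precisely the use of the width: choosing the $\kappa_n$ regular, $|\mathcal{C}_\beta|<\kappa=\sup_n\kappa_n$ lets one shrink $S$ so that $|O_\beta|<\kappa_{n^*}$ for a fixed $n^*$; then for each $n\ge n^*$ the set $O_\beta\cap\kappa_n$ is bounded below $\kappa_n$ by regularity, and Fodor applied to $\beta\mapsto\sup(O_\beta\cap\kappa_n)$ yields, for each $n$, a stationary $T_n\subseteq S$ and $\eta_n<\kappa_n$ with $O_\beta\cap(\eta_n,\kappa_n)=\emptyset$ on $T_n$, so that $T_n$ cannot reflect at any $\alpha$ admitting $D\in\mathcal{C}_\alpha$ with $\eta_n<\otp(D)\le\kappa_n$. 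Sets of this kind, combined with your $U_n$ and further refinements to control where $\otp(D)$ can fall relative to the $\eta_n$, are what the argument of \cite{Magidor-Cummings-Foreman-Squares} runs on; without some appeal to $|\mathcal{C}_\beta|<\kappa$ your iteration cannot close.
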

Another way of weakening the definition of square, due to Todorcevic, is to replace condition (3), the restriction on order types, with its non-compactness consequence:
\begin{defn}
\cite{TodorcevicHandbook} Let $\lambda$ be a regular cardinal. A coherent sequence $\mathcal{C}=\langle C_{\alpha}\mid\alpha<\lambda\rangle$ is a $\square(\lambda)$-sequence if there is no club $D\subset\lambda$ such that, for every $\alpha\in\acc(D)$, $D\cap\alpha=C_{\alpha}$. 
\end{defn}
A club $D$ such that $D\cap\alpha=C_{\alpha}$ for every $\alpha$ is called a \emph{thread} through $\mathcal{C}$.

Note that this definition can be understood as asserting that there is no way to extend $\mathcal{C}$ to a coherent sequence of length $\lambda+1$. Unlike with $\square_{\kappa}$, it is consistent that one can enlarge the universe and add a thread to some $\square(\lambda)$-sequence without changing the cofinality of $\lambda$.

As in the case for $\square_{\kappa}$, we are interested in weaker versions of this principle. 
\begin{defn}
Let $\lambda$ be a regular cardinal, and let $\eta \leq \lambda$. $\mathcal{C}=\langle\mathcal{C}_{\alpha}\mid\alpha<\lambda\rangle$ is a $\square(\lambda,<\eta)$-sequence if it is a coherent sequence of width $<\eta$ and there is no club $D\subset\lambda$ such that, for every $\alpha\in\acc(D)$, $D\cap\alpha\in\mathcal{C}_{\alpha}$.
\end{defn}

In this paper, we investigate the extent to which $\square(\lambda)$ and its weakenings place restrictions on simultaneous reflection. In Section \ref{sec: zfc results}, we present some generalizations of a folklore result that $\square(\lambda)$ implies the failure of $\Refl(2, S)$ for every stationary $S \subseteq \lambda$ to square sequences of larger width. In the rest of the paper, we 
prove consistency results showing that these generalizations are close to sharp. In Section \ref{sec: forcing preliminaries}, we introduce some of the forcing technology that will be used for this purpose. In Section \ref{sec:consistency results}, we produce models of $\ZFC$ in which square principles and some amount of simultaneous stationary reflection hold together. We conclude with some unresolved questions.  

\section{ZFC Results} \label{sec: zfc results}

In this section, we prove various results indicating that square principles place limitations on the extent of simultaneous stationary reflection. We first present a folklore result.

\begin{thm}
\label{thm:(Lambie-Hanson)-simul refl}
Suppose $\lambda$ is a regular, uncountable cardinal and $\square(\lambda)$ holds. Then every stationary set $S\subseteq\lambda$ can be partitioned into two stationary sets that do not reflect simultaneously. 
\end{thm}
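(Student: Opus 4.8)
The plan is to fix a $\square(\lambda)$-sequence $\mathcal{C}=\langle C_\alpha \mid \alpha<\lambda\rangle$ and, given a stationary $S$ (which, after discarding its nonstationarily many non-limit points, I may assume consists of limit ordinals), to try to \emph{read off a thread} from the common initial behaviour of the clubs $C_\alpha$ as $\alpha$ ranges over a large subset of $S$. Concretely, I would build by recursion an increasing continuous sequence $\langle \gamma_\xi\rangle$ of ordinals together with stationary sets $S_\xi$, each equal to $S$ modulo the nonstationary ideal, maintaining the invariant that $\{\gamma_\zeta \mid \zeta<\xi\}$ is an initial segment of $C_\alpha$ for every $\alpha\in S_\xi$. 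The driving function is the ``next element'' map $n_\xi(\alpha)=\min\big(C_\alpha \setminus \beta_\xi\big)$, where $\beta_\xi=\sup_{\zeta<\xi}(\gamma_\zeta+1)$; the invariant makes $n_\xi(\alpha)$ the $\xi$-th element of $C_\alpha$ for $\alpha\in S_\xi$. Note that at limit stages $\xi$, closedness of each $C_\alpha$ forces $\beta_\xi\in C_\alpha$, so $n_\xi(\alpha)=\beta_\xi$ is automatically constant there; thus $\langle\gamma_\xi\rangle$ is continuous and the genuine choices occur only at successor stages.

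At a successor stage I apply the key dichotomy to $n_\xi$ on $S_\xi$. If $n_\xi$ is constant modulo a nonstationary set, with value $\gamma_\xi$, I set $S_{\xi+1}=\{\alpha\in S_\xi \mid n_\xi(\alpha)=\gamma_\xi\}$ --- still equal to $S$ modulo a nonstationary set --- and extend the thread by one step. Otherwise I halt and extract the partition: I split the range of $n_\xi$ into two pieces $V_0,V_1$ so that $A:=n_\xi^{-1}(V_0)\cap S_\xi$ and $B:=n_\xi^{-1}(V_1)\cap S_\xi$ are both stationary. The crucial claim is that $A$ and $B$ do not reflect simultaneously. Indeed, fix $\delta$ with $\cf(\delta)>\omega$ and $\delta>\beta_\xi$, and set $v^\ast=\min(C_\delta\setminus\beta_\xi)$. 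For every $\alpha\in\acc(C_\delta)$ with $\alpha>v^\ast$, coherence gives $C_\alpha=C_\delta\cap\alpha$ and hence $n_\xi(\alpha)=v^\ast$; so whichever of $V_0,V_1$ omits $v^\ast$ has its preimage disjoint from the club $\acc(C_\delta)\cap(v^\ast,\delta)$, making the corresponding one of $A,B$ nonstationary below $\delta$. Thus at most one of $A,B$ reflects at any $\delta$. Finally, since $S_\xi$ agrees with $S$ modulo a nonstationary set, absorbing $S\setminus S_\xi$ into $A$ leaves reflection unchanged and yields the required partition of $S$.

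The remaining task is to show the recursion halts below $\lambda$, which is exactly where $\square(\lambda)$ enters. At limit stages I use that the nonstationary ideal on $\lambda$ is $\lambda$-complete, so $S_\xi=\bigcap_{\zeta<\xi}S_{\zeta+1}$ remains equal to $S$ modulo a nonstationary set, and for all but nonstationarily many $\alpha\in S_\xi$ one has $\beta_\xi\in\acc(C_\alpha)$, so the invariant (and continuity) persist. If the recursion never halted, then $D=\{\gamma_\xi \mid \xi<\lambda\}$ would be a club in $\lambda$, and the invariant together with coherence would give $C_\alpha=D\cap\alpha$ for every $\alpha\in\acc(D)$ --- that is, $D$ would thread $\mathcal{C}$, contradicting $\square(\lambda)$. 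Hence the construction stops at some successor stage, delivering $A$ and $B$.

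The main obstacle I anticipate is the limit-stage bookkeeping: simultaneously keeping $S_\xi$ co-nonstationary in $S$, ensuring $\beta_\xi$ is a genuine accumulation point of $C_\alpha$ so that $n_\xi$ really computes the next element, and confirming that the emerging set closes up into a bona fide thread rather than a cofinal set that fails coherence at some accumulation point. A smaller technical point is guaranteeing both $A$ and $B$ are stationary at the halting stage when every fiber of $n_\xi$ is nonstationary; here I would two-colour the values using the $\lambda$-completeness of the nonstationary ideal together with the fact that $\lambda$ is not measurable --- a consequence of $\square(\lambda)$, since $\square(\lambda)$ fails at weakly compact, hence at measurable, cardinals.
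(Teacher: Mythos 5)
Your construction is, at its core, the paper's proof reorganized as a recursion: your $n_\xi$ is the paper's ``$\xi$-th element of $C_\alpha$'' function $p_\xi$, your halting dichotomy is the paper's claim that some $p_\beta$ has two stationary fibers, the thread you extract from a never-halting recursion is the paper's $\bigcup_\beta C_{\gamma_\beta}$, and the non-reflection argument via $\acc(C_\delta)$ and the value $v^*=\min(C_\delta\setminus\beta_\xi)$ is identical. The recursive bookkeeping you worry about is in fact fine: $n_\xi$ is regressive, the nonstationary ideal on the regular cardinal $\lambda$ is $\lambda$-complete, and the thread claim follows by applying coherence at any $\alpha\in S_{\zeta+1}$ above $\gamma_\zeta$ (one needs this detour because the $\gamma_\zeta$ themselves need not lie in $S$). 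A pleasant side effect of your organization is that the case where $\otp(C_\alpha)<\alpha$ on a stationary subset of $S$, which the paper treats separately via Fodor, is absorbed automatically: if $\otp(C_\alpha)=\delta$ on a stationary set, your invariant forces the recursion to halt before stage $\delta$. Also, your closing worry about securing two stationary pieces is moot: Fodor already gives a stationary fiber of the regressive map $n_\xi$, and if no single fiber is co-nonstationary in $S_\xi$, a second application of Fodor to the complement of that fiber produces a second stationary fiber; no appeal to non-measurability of $\lambda$ is needed.

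The one step whose justification is actually wrong as stated is ``absorbing $S\setminus S_\xi$ into $A$ leaves reflection unchanged.'' A set that is nonstationary in $\lambda$ can perfectly well be stationary in some $\delta<\lambda$ of uncountable cofinality (a club in a single $\delta$ is such a set), so adding $N:=S\setminus S_\xi$ to one piece can create new reflection points for that piece; your case analysis only shows that at each $\delta$ one of $A,B$ fails to reflect, not that one of $A\cup N,\,B$ does. The repair is routine but must be made explicit: arrange, by intersecting the clubs recorded at each shrinking step, that $S_\xi=S\cap E^*$ for a single club $E^*$, put $N$ into the piece $B$, and argue by cases on $\delta$ of uncountable cofinality above $\beta_\xi$. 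If $\delta\notin\acc(E^*)$, then $A\subseteq E^*$ is bounded below $\delta$ and $A$ does not reflect there. If $\delta\in\acc(E^*)$ and $v^*(\delta)\notin V_0$, then $\acc(C_\delta)\cap(v^*,\delta)$ is a club in $\delta$ disjoint from $A$. If $\delta\in\acc(E^*)$ and $v^*(\delta)\in V_0$, then $\acc(C_\delta)\cap\acc(E^*)\cap(v^*,\delta)$ is a club in $\delta$ disjoint from $B\cup N$, since each of its points lying in $S$ belongs to $S\cap E^*=S_\xi$ and has $n_\xi$-value $v^*$, hence lies in $A$. With this emendation your argument is complete.
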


A proof of Theorem \ref{thm:(Lambie-Hanson)-simul refl} can be found in \cite{Chris-SimulReflection}. A stronger version of this theorem, in which the stationary set is partitioned into $\lambda$ stationary sets such that no two reflect simultaneously, can be found in \cite{rinot-chain-conditions}. We will present a different proof of Theorem \ref{thm:(Lambie-Hanson)-simul refl} that can be modified to prove a generalization. We start with some useful definitions and lemmas, beginning with a result of Kurepa from \cite{Kurepa}.

\begin{lem} \label{kurepa_lemma}
  Suppose $\kappa < \lambda$ are cardinals, with $\lambda$ regular, and suppose that $T$ is a tree of height $\lambda$, all of whose levels have size less than $\kappa$. Then $T$ has a cofinal branch.
\end{lem}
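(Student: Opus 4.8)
The plan is to argue by contradiction: I would isolate the ``tall'' part of the tree and then derive a contradiction from a counting argument on splitting nodes, thereby sidestepping the usual difficulty of threading a branch through limit levels.

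First I would set up the tall nodes. For $t \in T$ write $\mathrm{ht}(t)$ for its level and let $T_t$ be the set of nodes extending $t$ (the cone above $t$). Call $t$ \emph{tall} if $T_t$ meets cofinally many levels of $T$; since predecessors in a tree are unique, a tall node in fact has an extension at \emph{every} level $\geq \mathrm{ht}(t)$. Three facts drive the argument, each proved by the same mechanism. For any $\alpha < \lambda$, the set $\{s \in T : \mathrm{ht}(s) \geq \alpha\}$ is the union of the fewer than $\kappa$ cones $T_t$ with $t \in T_\alpha$, and a union of $<\kappa$ bounded subsets of $\lambda$ is bounded because $\kappa < \lambda = \cf(\lambda)$. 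Applying this at level $\alpha$ shows every level contains a tall node; applying the same covering inside a cone shows every tall node has a tall extension at each higher level, hence a tall immediate successor; and downward closure of tallness is immediate. This is the only place the hypotheses that $\lambda$ is regular and $\kappa < \lambda$ are used.

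Next, assume toward a contradiction that $T$ has no cofinal branch; then no cone has one either. I claim every tall node $t$ has two incomparable tall extensions: otherwise all tall extensions of $t$ would be pairwise comparable and, since $t$ extends to every higher level, they would form a chain meeting every level $\geq \mathrm{ht}(t)$, which together with the predecessors of $t$ is a cofinal branch. Taking the meet of two incomparable tall extensions of $t$ then produces, above every tall node, a \emph{splitting node}: a tall node with at least two tall immediate successors.

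Finally I would count splitting nodes. Fix $\alpha < \lambda$ and let $L = T_\alpha$. Every splitting node $s$ of level $<\alpha$ has two tall immediate successors, each of which (being tall) has a descendant in $L$; restricting attention to the set of nodes below some element of $L$, each such $s$ becomes a node with at least two immediate successors, i.e.\ a branching node of a tree whose only leaves are the elements of $L$. By the elementary fact that a tree has at most as many branching nodes as leaves, there are fewer than $\kappa$ splitting nodes below level $\alpha$. On the other hand, splitting nodes occur at unboundedly many levels, so I can pick $\kappa$ of them; as $\kappa < \cf(\lambda)$, the supremum $\alpha$ of their levels is below $\lambda$, and then all $\kappa$ of them lie below level $\alpha + 1$, contradicting the bound just obtained. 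The main obstacle is precisely the limit-level behaviour that defeats a naive branch construction; the device that overcomes it is the counting inequality ``branching nodes $\leq$ leaves,'' which converts unbounded splitting into an oversized level.
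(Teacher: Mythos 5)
The paper does not actually prove this lemma; it cites Kurepa and moves on, so your argument has to stand on its own. Most of it does: the pruning to tall nodes, the fact that tall nodes exist at every level, have tall extensions at every higher level, and are downward closed, and the observation that a tall node all of whose tall extensions are pairwise comparable yields a cofinal branch, are all correct, and the final counting scheme (unboundedly many ``branchings'' versus fewer than $\kappa$ of them below any fixed level) is exactly the right engine.

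The gap is at ``taking the meet of two incomparable tall extensions of $t$.'' In an arbitrary tree the meet of two incomparable nodes need not exist: if $u$ and $v$ first differ at a \emph{limit} level $\delta$ --- that is, their level-$\delta$ predecessors are distinct but have identical sets of predecessors --- then the chain of common lower bounds of $u$ and $v$ has limit order type and no maximum, and no node with two tall immediate successors is produced. (This can only be ruled out for Hausdorff trees, where nodes at limit levels are determined by their predecessors; the lemma as stated, and the trees it is applied to elsewhere in the paper, such as trees of clubs under end-extension, carry no such hypothesis.) Consequently ``above every tall node there is a splitting node,'' hence the unboundedness of splitting nodes, is not established, and the final contradiction does not go through: a putative branchless tree could do all of its splitting at limit levels and have no splitting nodes at all in your sense. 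The repair is to count splitting \emph{levels} rather than splitting nodes: for incomparable tall $u,v$ let $\Delta(u,v)$ be the least level at which their predecessors differ; this ordinal always exists, the set $\{\Delta(u,v) \mid u \neq v \in T_\alpha\}$ has size less than $\kappa$ and absorbs $\Delta(u,v)$ for every incomparable pair first differing below $\alpha$, and your argument that every tall node admits two incomparable tall extensions shows these levels are unbounded, giving the same contradiction without ever forming a meet. (Separately, your ``branching nodes $\leq$ leaves'' bound deserves a line of justification --- an injection into pairs of leaves via meets, which \emph{do} exist for a node with two distinct immediate successors --- but that step is sound.)
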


\begin{defn}
Let $\mathcal{C} = \langle \mathcal{C}_\alpha \mid \alpha < \lambda \rangle$ be a coherent sequence of length $\lambda$ and any width. A club $E\subseteq\lambda$ is a \emph{weak thread} through $\mathcal{C}$ if, for every $\alpha\in\acc(E)$, there is a $C\in\mathcal{C}_{\alpha}$, such that $E\cap\alpha\subseteq C$. 
\end{defn}

\begin{lem} \label{lem:weak thread}
  Suppose $\lambda$ is a regular, uncountable cardinal, $\kappa < \lambda$, and $\mathcal{C} = \langle \mathcal{C}_\alpha \mid \alpha < \lambda \rangle$ is a coherent sequence of length $\lambda$ and width $<\kappa$. If $\mathcal{C}$ has a weak thread, then $\mathcal{C}$ has a thread.
\end{lem}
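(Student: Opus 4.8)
The plan is to build an auxiliary tree whose nodes record coherent ``local threads'' running along the given weak thread, to arrange that its levels are small enough for Lemma \ref{kurepa_lemma} to apply, and then to read off a genuine thread from a cofinal branch. Fix a weak thread $E$ through $\mathcal{C}$ and let $E' = \acc(E)$, a club in $\lambda$ of order type $\lambda$. For each $\gamma \in E'$ I would set
\[
  N_\gamma = \{ C \in \mathcal{C}_\gamma : E \cap \gamma \subseteq C \}.
\]
By the definition of a weak thread, $N_\gamma \neq \emptyset$, and since $\mathcal{C}$ has width $< \kappa$ we have $|N_\gamma| \leq |\mathcal{C}_\gamma| < \kappa$. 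I then define a tree $T$ with underlying set $\{ (\gamma, C) : \gamma \in E',\ C \in N_\gamma \}$, ordered by $(\gamma', C') <_T (\gamma, C)$ iff $\gamma' < \gamma$ and $C \cap \gamma' = C'$.

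First I would verify that $T$ is a tree of height $\lambda$ all of whose levels have size $< \kappa$. The key computation is that for $\gamma' < \gamma$ in $E'$ and $C \in N_\gamma$, the segment $E \cap \gamma' \subseteq E \cap \gamma \subseteq C$ is cofinal in $\gamma'$, so $\gamma' \in \acc(C)$; coherence of $\mathcal{C}$ then gives $C \cap \gamma' \in \mathcal{C}_{\gamma'}$, and clearly $E \cap \gamma' \subseteq C \cap \gamma'$, so $C \cap \gamma' \in N_{\gamma'}$. Hence the predecessors of $(\gamma, C)$ are exactly $\{ (\gamma', C \cap \gamma') : \gamma' \in E' \cap \gamma \}$, a chain of order type $\otp(E' \cap \gamma)$. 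Thus the level of $(\gamma, C)$ depends only on $\gamma$, each level is of the form $\{ (\gamma, C) : C \in N_\gamma \}$ and so has size $< \kappa$, and since $\otp(E') = \lambda$ the height is $\lambda$. Applying Lemma \ref{kurepa_lemma} with this $\kappa < \lambda$ yields a cofinal branch $b$.

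From $b$ I would extract the thread. Since a branch is downward closed and every node lies above a node at each lower level, $b$ meets every level, so for each $\gamma \in E'$ there is a unique $C_\gamma$ with $(\gamma, C_\gamma) \in b$, and these cohere: $C_{\gamma'} = C_\gamma \cap \gamma'$ whenever $\gamma' < \gamma$ are in $E'$. Set $D = \bigcup_{\gamma \in E'} C_\gamma$. Coherence makes this an end-extension chain, so $D \cap \gamma = C_\gamma \in \mathcal{C}_\gamma$ for every $\gamma \in E'$; as each $C_\gamma$ is club in $\gamma$ and $E'$ is unbounded, $D$ is unbounded, and it is closed because any limit point $\beta$ of $D$ lies below some $\gamma \in E'$, whence $\beta$ is a limit point of the closed set $C_\gamma = D \cap \gamma$ and so $\beta \in C_\gamma \subseteq D$. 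Finally, for $\alpha \in \acc(D)$, pick $\gamma \in E'$ with $\gamma > \alpha$; then $D \cap \alpha = C_\gamma \cap \alpha$ and $\alpha \in \acc(C_\gamma)$, so coherence of $\mathcal{C}$ gives $D \cap \alpha \in \mathcal{C}_\alpha$. Thus $D$ is a thread.

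I expect the main obstacle to be bounding the tree's levels by the width while keeping its height equal to $\lambda$: this is precisely what forces the restriction to clubs containing $E \cap \gamma$ (rather than to all members of $\mathcal{C}_\gamma$, or to all coherent local threads, for which the levels could be too large), and the verification that this restriction is downward closed under coherence — that $C \cap \gamma'$ lands back in $N_{\gamma'}$ — is the crux on which both the tree structure and the level bound rest. Once that is in place, the remaining steps, namely checking that the union along a coherent branch is a club and threads $\mathcal{C}$, are routine bookkeeping.
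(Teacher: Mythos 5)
Your proof is correct and follows essentially the same route as the paper's: both build the tree of attempts to thread $\mathcal{C}$ along $E$, whose level at $\gamma \in \acc(E)$ consists of the members of $\mathcal{C}_\gamma$ containing $E \cap \gamma$, apply Kurepa's lemma, and take the union of a cofinal branch. Your version merely makes explicit the verifications (that coherence keeps the restriction $C \cap \gamma'$ inside $N_{\gamma'}$, and that the union of the branch is a club threading $\mathcal{C}$) which the paper leaves to the reader.
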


\begin{proof}
  Suppose $E$ is a weak thread through $\mathcal{C}$. We define the tree of attempts to construct a thread through $\mathcal{C}$ which contains $E$. Let $\{\gamma_{\alpha}\mid\alpha < \lambda\}$ be an increasing enumeration of $\acc(E)$. Let $T=\{C\mid$ for some $\alpha < \lambda,$ $C\in\mathcal{C}_{\gamma_{\alpha}}$ and $E\cap\gamma_{\alpha}\subseteq C\}$, ordered by end-extension. $T$ is then a tree of height $\lambda$, and, for $\alpha < \lambda$, the elements of the $\alpha^{\mathrm{th}}$ level of $T$ are exactly the members of $T\cap\mathcal{C}_{\gamma_{\alpha}}$. Thus, since $\mathcal{C}$ has width $<\kappa$, all levels of $T$ are of size $<\kappa$. Since $\kappa < \lambda$, Lemma \ref{kurepa_lemma} implies that $T$ has a cofinal branch. If $b$ is a cofinal branch through $T$, then $\bigcup b$ is a thread through $\mathcal{C}$.
\end{proof}

\begin{lem}\label{lem:very weak thread} 
  Suppose $\lambda$ is a regular, uncountable cardinal and $\mathcal{C} = \langle \mathcal{C}_\alpha \mid \alpha < \lambda \rangle$ is a coherent sequence of length $\lambda$. Suppose $T_{0}$ and $T_{1}$ are unbounded subsets of $\lambda$ such that, for every $\alpha\in T_{1}$, there is $C\in\mathcal{C}_{\alpha}$ such that $T_{0}\cap\alpha\subseteq C$. Then $\mathcal{C}$ has a weak thread.
\end{lem}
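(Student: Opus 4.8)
The plan is to build the weak thread directly as the closure of $T_0$. Concretely, I would set $E = T_0 \cup \acc(T_0)$, the topological closure of $T_0$ in $\lambda$; since $T_0$ is unbounded, $E$ is a club. The task then reduces to verifying the weak-threading condition at each $\alpha \in \acc(E)$, and the first useful observation is that $\acc(E) = \acc(T_0)$, so it suffices to control accumulation points of $T_0$. To organize this I would introduce the set
\[
  A = \{\alpha < \lambda \mid \text{there is } C \in \mathcal{C}_{\alpha} \text{ with } T_0 \cap \alpha \subseteq C\},
\]
so that the hypothesis says exactly $T_1 \subseteq A$; in particular $A$ is unbounded.

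The heart of the argument is a downward-closure claim: if $\alpha \in A$ and $\beta \in \acc(T_0) \cap \alpha$, then $\beta \in A$. To see this, fix a witness $C \in \mathcal{C}_{\alpha}$ with $T_0 \cap \alpha \subseteq C$. Since $\beta = \sup(T_0 \cap \beta)$ and $T_0 \cap \beta \subseteq C$, one gets $\beta = \sup(C \cap \beta)$, so $\beta \in \acc(C)$; coherence of $\mathcal{C}$ then yields $C \cap \beta \in \mathcal{C}_{\beta}$, and $T_0 \cap \beta \subseteq C \cap \beta$ witnesses $\beta \in A$. This is the step I expect to be the main (if modest) obstacle, since it is the only place where coherence is genuinely used, and one must be careful to check that $\beta$ is an honest accumulation point of the witnessing club $C$ (below its strong supremum $\alpha$) before invoking coherence.

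With the claim in hand, I would deduce $\acc(T_0) \subseteq A$: given $\beta \in \acc(T_0)$, choose any $\gamma \in T_1$ with $\gamma > \beta$ (possible because $T_1$ is unbounded); then $\gamma \in A$ and $\beta \in \acc(T_0) \cap \gamma$, so $\beta \in A$ by the claim. Finally, to close the argument, fix $\alpha \in \acc(E) = \acc(T_0) \subseteq A$ and a witness $C \in \mathcal{C}_{\alpha}$ with $T_0 \cap \alpha \subseteq C$. Since $C$ is closed in $\alpha$ and $E \cap \alpha$ is precisely the closure of $T_0 \cap \alpha$ in $\alpha$, we obtain $E \cap \alpha \subseteq C \in \mathcal{C}_{\alpha}$. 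Hence $E$ satisfies the weak-threading condition at every $\alpha \in \acc(E)$, so $E$ is a weak thread through $\mathcal{C}$, as required.
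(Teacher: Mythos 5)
Your proof is correct and follows essentially the same route as the paper: both arguments take a point of $T_1$ above a given accumulation point of $T_0$, use the hypothesis to get a covering club there, and use coherence to restrict that club downward; the paper just uses the club $\acc(T_0)\cap\acc(T_1)$ where you use the closure of $T_0$, and it verifies the weak-thread condition directly rather than via your auxiliary set $A$. Your care about checking that $\beta$ is a genuine accumulation point of the witnessing club before invoking coherence is exactly the right point to be careful about, and it goes through.
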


\begin{proof}
Let $E=\acc(T_{0})\cap\acc(T_{1})$. We claim that $E$ is a weak thread for $\mathcal{C}$. To see this, fix $\alpha\in\acc(E)$, and let $\beta = \min(T_1 \setminus (\alpha + 1))$. By our assumption, there is $C \in \mathcal{C}_\beta$ such that $T_0 \cap \beta \subseteq C$. In particular, $\acc(T_0) \cap \beta \subseteq \acc(C)$, which implies that $C \cap \alpha \in \mathcal{C}_\alpha$ and $E \cap \alpha \subseteq C \cap \alpha$, as desired.
\end{proof}

We thus obtain the following corollary, which we will often use to prove that certain coherent sequences have threads.

\begin{cor}\label{cor:unbounded_cover_implies_thread}
  Suppose $\lambda$ is a regular, uncountable cardinal, $\kappa < \lambda$, $\mathcal{C} = \langle \mathcal{C}_\alpha \mid \alpha < \lambda \rangle$ is a coherent sequence of length $\lambda$ and width $<\kappa$, and there is an unbounded $A \subseteq \lambda$ such that, for all $\alpha \in A$, there is $C \in \mathcal{C}_\alpha$ such that $A \cap \alpha \subseteq C$. Then $\mathcal{C}$ has a thread.
\end{cor}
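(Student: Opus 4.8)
The plan is to deduce this immediately by chaining together the two preceding lemmas, with the given unbounded set $A$ playing the role of both auxiliary sets in Lemma \ref{lem:very weak thread}. The only real content of the corollary is recognizing that the hypotheses have been arranged precisely so that this substitution works, so I do not expect any genuine obstacle here — it is a packaging of the previous two results into a single, conveniently applicable statement.

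First I would set $T_0 = T_1 = A$ and check that the hypothesis of Lemma \ref{lem:very weak thread} is satisfied. That lemma asks for unbounded sets $T_0, T_1 \subseteq \lambda$ such that for every $\alpha \in T_1$ there is $C \in \mathcal{C}_\alpha$ with $T_0 \cap \alpha \subseteq C$. Since $A$ is unbounded and, by assumption, for every $\alpha \in A$ there is $C \in \mathcal{C}_\alpha$ with $A \cap \alpha \subseteq C$, the choice $T_0 = T_1 = A$ meets these requirements verbatim. Note that at this stage the width bound on $\mathcal{C}$ plays no role; Lemma \ref{lem:very weak thread} applies to a coherent sequence of any width. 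Applying it yields a weak thread $E$ through $\mathcal{C}$.

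Next I would invoke Lemma \ref{lem:weak thread} to upgrade this weak thread to an actual thread. This is where the hypotheses $\kappa < \lambda$ and $\mathcal{C}$ having width $<\kappa$ are used: they are exactly the conditions under which Lemma \ref{lem:weak thread} (via the tree-of-attempts construction and Kurepa's Lemma) guarantees that a weak thread can be refined to a genuine thread. Since $\mathcal{C}$ has width $<\kappa$ and $\kappa < \lambda$, Lemma \ref{lem:weak thread} applies directly to $E$, producing a thread through $\mathcal{C}$, which is what we wanted. The entire argument is thus a two-line composition, and the main thing to verify carefully is simply that the covering hypothesis on $A$ is the $T_0 = T_1$ special case of the hypothesis of Lemma \ref{lem:very weak thread}.
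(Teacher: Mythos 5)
Your proof is correct and is exactly the argument the paper intends: the corollary is stated immediately after Lemmas \ref{lem:weak thread} and \ref{lem:very weak thread} precisely as their composition, with $T_0 = T_1 = A$ supplying a weak thread and the width hypothesis then upgrading it to a thread. No issues.
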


We are now ready to prove Theorem \ref{thm:(Lambie-Hanson)-simul refl},

\begin{proof}[Proof of Theorem \ref{thm:(Lambie-Hanson)-simul refl}]
Let $\mathcal{C}=\langle C_{\alpha}\mid\alpha<\lambda\rangle$ be a $\square(\lambda)$-sequence, and let $S \subseteq \lambda$ be a stationary set. We start by splitting $\lambda$ into two sets according to the behaviour of $\mathcal{C}$. Let $T_{bd}=\{\alpha < \lambda \mid\otp(C_{\alpha})<\alpha\}$ and $T_{ubd}=\{\alpha < \lambda \mid\otp(C_\alpha)=\alpha\}$. 

Suppose first that $S \cap T_{bd}$ is stationary. In this case, by Fodor's Lemma, we can find a stationary $S' \subseteq S \cap T_{bd}$ and a fixed ordinal $\delta < \lambda$ such that, for all $\alpha \in S'$, $\otp(C_\alpha) = \delta$. Then, by the argument in the proof of Theorem \ref{thm:non refl square_kappa}, $S'$ does not reflect. Thus, if $S = S_0 \ \dot{\cup} \  S_1$ is any partition of $S$ into stationary sets with $S_0 \subseteq S'$, then $S_0$ and $S_1$ do not reflect simultaneously.

Thus, we may assume that $S \cap T_{bd}$ is non-stationary. Let $D \subseteq \lambda$ be a club such that $D \cap S \cap T_{bd} = \emptyset$. For all $\beta < \lambda$, define a function $p_\beta : (D \cap S) \setminus (\beta + 1) \rightarrow \lambda$ by letting $p_\beta(\alpha)$ be the $\beta^{\mathrm{th}}$ member of $C_\alpha$, i.e.\ the unique $\gamma \in C_\alpha$ such that $\otp(C_\alpha \cap \gamma) = \beta$.

\begin{claim}
There are $\beta, \gamma < \lambda$ such that $p_\beta^{-1}(\gamma)$ and $S \setminus p_\beta^{-1}(\gamma)$ are both stationary.
\end{claim}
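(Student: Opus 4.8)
The plan is to argue by contradiction. Suppose the conclusion fails, so that for every $\beta < \lambda$ the function $p_\beta$ has at most one value whose preimage is stationary. First I would observe that each $p_\beta$ is regressive on its domain $(D \cap S) \setminus (\beta + 1)$: for such $\alpha$ we have $\otp(C_\alpha) = \alpha > \beta$, so $p_\beta(\alpha)$ is a genuine member of $C_\alpha \subseteq \alpha$ and hence $p_\beta(\alpha) < \alpha$. Since $(D \cap S)\setminus(\beta+1)$ is stationary, Fodor's Lemma provides at least one stationary fiber, and under our assumption it is unique; call its value $g(\beta)$ and write $X_\beta = p_\beta^{-1}(g(\beta))$. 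The goal is then to manufacture from $\langle g(\beta) \mid \beta < \lambda\rangle$ an unbounded set to which Corollary \ref{cor:unbounded_cover_implies_thread} applies, producing a thread through $\mathcal{C}$ and contradicting $\square(\lambda)$.

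The heart of the argument, and the step I expect to be the main obstacle, is to pin down $C_{g(\beta)}$ for limit $\beta$. Naively one wants a single $\alpha \in X_\beta$ that is ``generic'', in the sense that $p_\xi(\alpha) = g(\xi)$ for all $\xi < \beta$ simultaneously; but this would require meeting $\beta$-many stationary sets at once and cannot be arranged by intersecting them. The key observation is that coherence makes this unnecessary. Fix a limit $\beta$ and any $\alpha \in X_\beta$. Since $\beta$ is a limit and $C_\alpha$ is closed, $g(\beta) = p_\beta(\alpha) = \sup\{p_\xi(\alpha) \mid \xi < \beta\}$ is an accumulation point of $C_\alpha$, so coherence gives $C_{g(\beta)} = C_\alpha \cap g(\beta) = \{p_\xi(\alpha) \mid \xi < \beta\}$. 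But the left-hand side does not depend on $\alpha$, so all $\alpha \in X_\beta$ enumerate the fixed club $C_{g(\beta)}$ in the same order; that is, for each $\xi < \beta$ the map $\alpha \mapsto p_\xi(\alpha)$ is constant on the stationary set $X_\beta$. Its constant value then has stationary preimage under $p_\xi$, so by uniqueness it equals $g(\xi)$. Thus $C_{g(\beta)} = \{g(\xi) \mid \xi < \beta\}$, and along the way one sees that $g$ is strictly increasing (for $\xi < \beta$ and $\alpha \in X_\beta$, $g(\xi) = p_\xi(\alpha) < p_\beta(\alpha) = g(\beta)$).

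With this identity in hand the rest is routine. I would set $A = \{g(\beta) \mid \beta < \lambda \text{ a limit}\}$, which is unbounded in $\lambda$ since $g(\beta) \geq \beta$. For $\eta = g(\beta) \in A$, strict monotonicity of $g$ gives $A \cap \eta \subseteq \{g(\xi) \mid \xi < \beta\} = C_\eta$, so $A$ satisfies the hypothesis of Corollary \ref{cor:unbounded_cover_implies_thread}, applied with $\kappa = 2$ since the $\square(\lambda)$ sequence $\mathcal{C}$ has width $1$. The corollary then yields a thread through $\mathcal{C}$, contradicting the assumption that $\mathcal{C}$ is a $\square(\lambda)$ sequence. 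This contradiction shows that some $p_\beta$ has two distinct values with stationary preimages, which is exactly the claim.
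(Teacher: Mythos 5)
Your proof is correct and follows essentially the same route as the paper's: negate the claim, use regressivity of $p_\beta$ and Fodor's Lemma to extract, for each $\beta$, a single value with stationary fiber, and then use coherence of $\mathcal{C}$ at accumulation points of $C_\alpha$ for $\alpha$ in those fibers to show that the clubs indexed by these values cohere into a thread, contradicting $\square(\lambda)$. The only difference is cosmetic: the paper fixes clubs $E_\beta$ with $p_\beta^{-1}(\gamma_\beta) \supseteq E_\beta \cap S$ and intersects two of them to find a common witness $\alpha$ for a pair $\beta_0 < \beta_1$, whereas you use a single $\alpha \in X_\beta$ to determine $C_{g(\beta)} = \{g(\xi) \mid \xi < \beta\}$ all at once and then conclude via Corollary \ref{cor:unbounded_cover_implies_thread} rather than exhibiting the thread directly.
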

\begin{proof}
Suppose not. Then, for every $\beta < \lambda$, fix a club $E_\beta$ and an ordinal $\gamma_\beta < \lambda$ such that $p_\beta^{-1}(\gamma_\beta) \supseteq E_\beta \cap S$. If $\beta_0 < \beta_1 < \lambda$ are limit ordinals and $\alpha \in E_{\beta_0} \cap E_{\beta_1} \cap S$, then $\gamma_{\beta_0}, \gamma_{\beta_1} \in \acc(C_\alpha)$, so $C_{\gamma_{\beta_1}} \cap \gamma_{\beta_0} = C_{\gamma_{\beta_0}}$. Then $\bigcup \{C_{\gamma_\beta} \mid \beta < \lambda$, $\beta$ limit$\}$ is a thread through $\mathcal{C}$, which is a contradiction.
\end{proof}

Fix such a $\beta, \gamma < \lambda$, and let $S_0 = p_\beta^{-1}(\gamma)$ and $S_1 = S \setminus S_0$. We claim that $S_0$ and $S_1$ do not reflect simultaneously. To see this, fix $\alpha < \lambda$ of uncountable cofinality. If $\otp(C_\alpha) \leq \beta$ or $p_\beta(\alpha) \neq \gamma$, then $\acc(C_\alpha) \cap S_0 = \emptyset$. If $p_\beta(\alpha) = \gamma$, then $\acc(C_\alpha) \setminus (\gamma + 1) \cap S_1 = \emptyset$. In either case, $S_0$ and $S_1$ do not reflect simultaneously at $\alpha$.
\end{proof}

\subsection{Finite width}
We now generalize Theorem \ref{thm:(Lambie-Hanson)-simul refl} to square sequences of larger width. We start by considering sequences of finite width.

\begin{thm}\label{thm:non-reflecting-from-square} 
Suppose $\lambda$ is a regular, uncountable cardinal and $\square(\lambda,<\omega)$ holds. Then, for all stationary $S \subseteq \lambda$, $\Refl(2, S)$ fails.
\end{thm}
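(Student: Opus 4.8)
The plan is to adapt the proof of Theorem \ref{thm:(Lambie-Hanson)-simul refl} to a $\square(\lambda, <\omega)$ sequence $\mathcal{C} = \langle \mathcal{C}_\alpha \mid \alpha < \lambda\rangle$, where now each $\mathcal{C}_\alpha$ is a \emph{finite} set of clubs rather than a single club. As before, I would first dispose of the ``bounded order type'' case. For each $\alpha < \lambda$, the set $\mathcal{C}_\alpha$ is finite, so if some $S \subseteq \lambda$ is stationary and for stationarily many $\alpha \in S$ there is $C \in \mathcal{C}_\alpha$ with $\otp(C) < \alpha$, then by a double application of Fodor's Lemma (first picking out a uniform bound on $|\mathcal{C}_\alpha|$ and a coordinate, then fixing the order type) I can find a stationary $S' \subseteq S$ and a fixed $\delta < \lambda$ so that every $\alpha \in S'$ carries some $C_\alpha \in \mathcal{C}_\alpha$ with $\otp(C_\alpha) = \delta$. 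The coherence argument from Theorem \ref{thm:non refl square_kappa} then shows $S'$ meets $\acc(C)$ in at most one point for every club $C$ appearing in the sequence, so $S'$ does not reflect, and any partition of $S$ with $S_0 \subseteq S'$ works.

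So I may assume that for a club $D$, no $\alpha \in D \cap S$ has any $C \in \mathcal{C}_\alpha$ of bounded order type; equivalently, every such $\alpha$ has $\otp(C) = \alpha$ for all $C \in \mathcal{C}_\alpha$. The key difference from the width-one case is that the ``$\beta$-th member'' function must now be replaced by a vector-valued function. For each $\beta < \lambda$ and each $\alpha \in D \cap S$ above $\beta$, I would record the finite set $P_\beta(\alpha) = \{ \text{the } \beta\text{-th member of } C \mid C \in \mathcal{C}_\alpha\}$, a finite subset of $\lambda$. The analogue of the Claim is that there exist $\beta < \lambda$ and two \emph{distinct} finite sets $v_0, v_1$ of ordinals such that both $P_\beta^{-1}(v_0)$ and $P_\beta^{-1}(v_1)$ are stationary. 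Suppose not: then for each limit $\beta$ I get a club $E_\beta$ and a single finite set $v_\beta$ with $P_\beta^{-1}(v_\beta) \supseteq E_\beta \cap S$. For $\beta_0 < \beta_1$ limit and $\alpha \in E_{\beta_0} \cap E_{\beta_1} \cap S$, every ordinal in $v_{\beta_0}$ and every ordinal in $v_{\beta_1}$ lies in $\acc(C)$ for a suitable $C \in \mathcal{C}_\alpha$.

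The hard part will be extracting an actual thread (or weak thread, via Corollary \ref{cor:unbounded_cover_implies_thread}) from this coherence data, since now the ordinals recorded at stage $\beta$ form a finite set and the witnessing clubs at each $\alpha$ may differ across the coordinates. I expect to run a pressing-down or tree-of-attempts argument: the union $\bigcup_{\beta \text{ limit}} v_\beta$ is an unbounded subset $A$ of $\lambda$, and coherence should give, for each $\gamma \in A$, some $C \in \mathcal{C}_\gamma$ with $A \cap \gamma \subseteq C$, at which point Corollary \ref{cor:unbounded_cover_implies_thread} (applicable since the width is $<\omega < \lambda$) produces a thread, contradicting $\square(\lambda,<\omega)$. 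Establishing the ``$A \cap \gamma \subseteq C$'' containment cleanly, keeping track of which finite coordinate witnesses each accumulation point, is the main technical obstacle. Once the Claim is secured, fixing $\beta, v_0, v_1$ and setting $S_0 = P_\beta^{-1}(v_0)$, $S_1 = S \setminus S_0$, I would argue exactly as in Theorem \ref{thm:(Lambie-Hanson)-simul refl} that at any $\alpha$ of uncountable cofinality, the finite family $\mathcal{C}_\alpha$ forces the ``$\beta$-th members'' above $\beta$ to take a single value-set, so $\acc(C)$ for $C \in \mathcal{C}_\alpha$ can meet at most one of $S_0, S_1$ in a tail, preventing simultaneous reflection at $\alpha$.
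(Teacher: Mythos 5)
Your overall strategy follows the paper's, but there are three genuine gaps. The first is in your opening case division. From the fact that \emph{some} $C \in \mathcal{C}_\alpha$ has order type $\delta$ for every $\alpha \in S'$, you cannot conclude that $S'$ meets $\acc(C)$ in at most one point for a club $C$ appearing higher in the sequence: for $\alpha \in S' \cap \acc(C)$, the set $C \cap \alpha$ is \emph{some} member of $\mathcal{C}_\alpha$, not necessarily the one of order type $\delta$, so its order type is unconstrained whenever $\mathcal{C}_\alpha$ also contains clubs of order type $\alpha$. Consequently your reduction to ``every club at every $\alpha \in D \cap S$ has full order type'' is not legitimate; the mixed case, in which $\mathcal{C}_\alpha$ contains clubs of both bounded and full order type, must be handled separately. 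The paper does this (its Case 3) by a nontrivial surgery on the sequence --- truncating each club past a suitable initial segment and discarding the short clubs at points of $S$ --- and then verifying that the modified sequence is still a $\square(\lambda,<\omega)$ sequence. Nothing in your sketch plays this role.

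The second and third gaps are in the main case. Requiring only that $v_0 \neq v_1$ is too weak: at a candidate reflection point $\delta$, the points $\gamma \in S \cap \acc\bigl(\bigcap \mathcal{C}_\delta\bigr)$ satisfy only $P_\beta(\gamma) \supseteq u$ where $u = P_\beta(\delta)$; their full $P_\beta$-values can differ, since the clubs of $\mathcal{C}_\gamma$ not of the form $D \cap \gamma$ for $D \in \mathcal{C}_\delta$ contribute arbitrary extra ordinals. So your assertion that the $\beta$-th members ``take a single value-set'' on $S \cap \acc(C)$ is false, and $S_0 = P_\beta^{-1}(v_0)$ together with $S \setminus S_0$ can both reflect at $\delta$. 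One needs $v_0 \cap v_1 = \emptyset$ (then $u \subseteq v_0 \cap v_1$ forces $u = \emptyset$, a contradiction), which the paper arranges by choosing $v_1$ entirely above $\max(v_0)$ and taking $S_1 = P_\beta^{-1}(v_1)$; accordingly, the correct negation of the Claim is not ``one value on a club'' but ``the values meet a fixed bounded set on a club.'' Finally, the thread extraction you flag as the main obstacle really is one: even granting that a club $F$ satisfies $F \cap \alpha \subseteq \bigcup \mathcal{C}_\alpha$ for $\alpha \in S \cap \acc(F)$, passing from a cover by finitely many clubs to a \emph{single} covering club (so that Corollary \ref{cor:unbounded_cover_implies_thread} applies) requires the argument of Lemma \ref{lem:Thread from cover}: minimize the covering subfamily, stabilize its size by Fodor, and show each club in a minimal family accumulates at every point of the resulting stationary set. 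This is exactly where finiteness of the width is used essentially --- the paper notes the analogous statement fails for infinite width --- so it cannot be waved through.
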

\begin{proof}
Let $\mathcal{C} = \langle \mathcal{C}_\alpha \mid \alpha < \lambda \rangle$ be a $\square(\lambda,<\omega)$-sequence. For each $\alpha < \lambda$, we let $\mathcal{C}_\alpha = \{C_{\alpha, i} \mid i < n_\alpha \}$ and, for convenience, we assume that the clubs are enumerated so that, for all $i_0 < i_1 < n_\alpha$, $\otp (C_{\alpha,i_0}) \leq \otp(C_{\alpha,i_1})$. For all $\alpha \in S$, let $m_\alpha \leq n_\alpha$ be least such that either $\otp(C_{\alpha, m_\alpha}) = \alpha$ or $m_\alpha = n_\alpha$. By shrinking $S$ if necessary, we may assume that there are $m \leq n < \omega$ and $\langle \eta_i \mid i < m \rangle$ such that, for all $\alpha \in S$, $m_\alpha = m$, $n_\alpha = n$, and, for all $i < m$, $\otp(C_{\alpha, i}) = \eta_i$. We may also assume that $S$ consists only of indecomposable ordinals, i.e.\ $\alpha$ such that, for all $\beta, \delta < \alpha$, we have $\beta + \delta < \alpha$.

\textbf{Case 1: $m=n$.} In this case, $S$ itself does not reflect. To see this, fix $\delta < \lambda$ with $\cf(\delta) > \omega$, and let $C \in \mathcal{C}_\delta$. If $\alpha \in S \cap \acc(C)$, then $C \cap \alpha \in \mathcal{C}_\alpha$, so $\otp (C \cap \alpha) \in \{\eta_i \mid i < m\}$. Thus, $|S \cap \acc(C)| \leq m$. In particular, a final segment of $\acc(C)$ is disjoint from $S$ and thus witnesses that $S$ does not reflect at $\delta$.

\textbf{Case 2: $m=0$.} For each $\alpha \in S$ and $i < n$, let $C_{\alpha, i}$ be enumerated in increasing order by $\{\xi^{\alpha, i}_\beta \mid \beta < \alpha\}$. For each $\beta < \lambda$, define a function $p_\beta$ on $S \setminus (\beta + 1)$ by letting $p_\beta = \{\xi^{\alpha, i}_\beta \mid i < n \}$.

\begin{lem}
Suppose that, for every $\beta < \lambda$, there is a club $E_\beta \subseteq \lambda$ and an ordinal $\gamma_\beta < \lambda$ such that $p_\beta(\alpha) \cap \gamma_\beta \neq \emptyset$ for every $\alpha \in E_\beta \cap S$. Then there is a club $F \subseteq \lambda$ such that, for every $\alpha \in S \cap \acc(F)$, $F \cap \alpha \subseteq \bigcup \mathcal{C}_\alpha$.
\end{lem}
\begin{proof}
Let $D$ be the club of all limit ordinals $\delta < \lambda$ such that, for all $\beta < \delta$, $\gamma_\beta < \delta$. Let $F = D \cap \triangle_{\beta < \lambda} E_\beta$. We claim that $F$ is as desired. To this end, let $\alpha \in S \cap \acc(F)$, and let $\delta \in F \cap \alpha$. Since $\alpha \in S \cap \bigcap_{\beta < \delta} E_\beta$, we have that, for all $\beta < \delta$, there is $i_\beta < n$ such that $\beta \leq \xi^{\alpha, i_\beta}_\beta < \delta$. Fix $i < n$ such that $i_\beta = i$ for unboundedly many $\beta < \delta$. Then $\delta \in C_{\alpha, i}$, so $F \cap \alpha \subseteq \bigcup \mathcal{C}_\alpha$.
\end{proof}

\begin{lem}\label{lem:Thread from cover}
Suppose there is a club $F \subseteq \lambda$ such that, for every $\alpha \in S \cap \acc(F)$, $F \cap \alpha \subseteq \bigcup \mathcal{C}_\alpha$. Then $\mathcal{C}$ has a thread.
\end{lem}
\begin{proof}
For every $\alpha \in S \cap \acc(F)$, let $k_\alpha$ be the size of the smallest subset $\mathcal{C}'_\alpha \subseteq \mathcal{C}_\alpha$ that covers $F \cap \alpha$ up to a bounded error, i.e.\ such that there is $\beta_\alpha < \alpha$ such that $F \cap (\beta_\alpha, \alpha) \subseteq \bigcup \mathcal{C}'_\alpha$. By Fodor's Lemma, there is $k^* < \omega$, $\beta^* < \lambda$, and a stationary $T \subseteq S \cap \acc(F)$ such that, for all $\alpha \in T$, $k_\alpha = k^*$ and $\beta_\alpha = \beta^*$ (in particular, $\alpha > \beta^*$).

\begin{claim}
For all $\alpha \in T$ and all $C \in \mathcal{C}'_\alpha$, $T \cap \alpha \subseteq \acc(C)$.
\end{claim}

\begin{proof}
Fix $\alpha \in T$ and $\delta \in T \cap \alpha$, let $\mathcal{D}_\alpha = \{C \in \mathcal{C}'_\alpha \mid \delta \not\in \acc(C) \}$, and suppose for sake of contradiction that $\mathcal{D}_\alpha \neq \emptyset$. Fix $\beta$ such that $\beta^* \leq \beta < \delta$ and, for all $C \in \mathcal{D}_\alpha$, $C \cap (\beta, \delta) = \emptyset$. It must therefore be the case that $F \cap (\beta, \delta) \subseteq \bigcup (\mathcal{C}'_\alpha \setminus \mathcal{D}_\alpha)$. Let $\mathcal{C}''_\delta = \{C \cap \delta \mid C \in \mathcal{C}'_\alpha \setminus \mathcal{D}_\alpha\}$. Then $|\mathcal{C}''_\delta| < k^*$ and $F \cap (\beta, \delta) \subseteq \bigcup \mathcal{C}''_\delta$, contradicting the fact that $k_\delta = k^*$ (since $\delta \in T$).
\end{proof}
Thus, by Corollary \ref{cor:unbounded_cover_implies_thread} applied to $T$, $\mathcal{C}$ has a thread.
\end{proof}
Combining these two lemmas, we see that there must be an ordinal $\beta<\lambda$ such that, for every $\gamma < \lambda$, there are stationarily many $\alpha \in S$ such that $p_{\beta}(\alpha)\cap\gamma=\emptyset$. Fix such a $\beta$. By shrinking $S$ if necessary, we may assume $S \subseteq \lambda \setminus (\beta + 1)$. For a finite set of ordinals below $\lambda$, $v$, let $S_{v}=p_{\beta}^{-1}(v)$.
\begin{lem}
\label{lem:non intersecting values to non-reflecting sets} Let $\ell < \omega$ and suppose that, for all $k < \ell$, $v_k$ is a finite set of ordinals below $\lambda$. If $\bigcap_{k < \ell}v_k = \emptyset$, then $\mathcal{S} = \{S_{v_k} \mid k < \ell\}$ does not reflect simultaneously. 
\end{lem}
\begin{proof}
Let $\beta < \delta < \lambda$ with $\cf(\delta) > \omega$, let $C=\bigcap\mathcal{C}_\delta$, and let $p_\beta(\delta)=u$. Note that, for every $\gamma \in S \cap \acc(C)$, $p_\beta(\gamma)\supseteq u$. This is because $D\cap\gamma\in\mathcal{C}_\gamma$ for every $D\in\mathcal{C}_\delta$, and the $\beta^{\mathrm{th}}$ element of $D\cap\gamma$ is the same as the $\beta^{\mathrm{th}}$ element of $D$. Since $\bigcap v_{i}=\emptyset$, there is $k<\ell$ such that $v_{k}\not\supseteq u$, and this implies that $S_{v_{k}}$ does not reflect at $\delta$. 
\end{proof}

By Fodor's Lemma, we can find a stationary set $S_0 \subseteq S$ and a finite set $v_0 \subseteq \lambda$ such that $p_{\beta}``S_{0}=\{v_0\}$. Let $\gamma_0=\max(v_0)$. By our choice of $\beta$, there are stationarily many $\alpha \in S$ such that $p_\beta(\alpha) \cap (\gamma_0 + 1) = \emptyset$, so, through another application of Fodor's Lemma, we can find a stationary $S_1 \subseteq S$ and a finite $v_1 \subseteq \lambda$ such that $p_{\beta}`` S_1=\{v_1\}$ and $v_1\cap(\gamma_{0}+1)=\emptyset$. In particular, $v_1\cap v_0=\emptyset$, so we can apply Lemma \ref{lem:non intersecting values to non-reflecting sets} and conclude that $S_0$ and $S_1$ do not reflect simultaneously.

\textbf{Case 3: $0 < m < n$.} We will reduce this case to Case 2. Recall that $\langle \eta_i \mid i < m \rangle$ is such that, for all $\alpha \in S$ and all $i < m$, $\otp (C_{\alpha, i}) = \eta_i$. For each $\alpha < \lambda$ and each $C \in \mathcal{C}_\alpha$, define $C^*$ as follows. If $\otp (C) \leq \eta_0$, let $C^* = C$. Otherwise, let $i_C < m$ be greatest such that $\eta_{i_C} < \otp (C)$, let $\xi \in C$ be such that $\otp (C \cap \xi) = \eta_{i_C}$, and let $C^* = C \setminus (\xi + 1)$. Define $\mathcal{C}^* = \langle \mathcal{C}^*_\alpha \mid \alpha < \lambda \rangle$ by letting, for all $\alpha < \lambda$, $\mathcal{C}^*_\alpha = \{C^* \mid C \in \mathcal{C}_\alpha\}$. It is routine to check that $\mathcal{C}^*$ is a $\square(\lambda, < \omega)$-sequence. Suppose $\alpha \in S$ and $i < n$. If $i < m$, we have arranged that, for all $\alpha < \beta < \lambda$ and all $C^* \in \mathcal{C}^*_\beta$, $C^* \cap \alpha \neq C^*_{\alpha, i}$. If $m \leq i < n$, then, since $\alpha$ is indecomposable, $\otp (C^*_{\alpha, i}) = \alpha$. For $\alpha < \lambda$, define $\mathcal{D}_\alpha$ by letting $\mathcal{D}_\alpha = \mathcal{C}^*_\alpha$ for $\alpha \not\in S$ and $\mathcal{D}_\alpha = \{C^*_{\alpha, i} \mid m \leq i < n \}$ for $\alpha \in S$. Then $\mathcal{D} = \langle \mathcal{D}_\alpha \mid \alpha < \lambda \rangle$ is a $\square(\lambda, < \omega)$-sequence and, for all $\alpha \in S$ and all $D \in \mathcal{D}_\alpha$, $\otp (D) = \alpha$. We may thus proceed as in Case 2, using $\mathcal{D}$ in place of $\mathcal{C}$.
\end{proof}

The proof of Theorem \ref{thm:non-reflecting-from-square} uses the fact that the width of the square sequence is finite in two places. The first is in finding a value $v\in\kappa^{<\omega}$ such that $p_{\beta}^{-1}(v)$ is stationary, and the second is in the proof of Lemma \ref{lem:Thread from cover}. We will see later that the generalization of Lemma \ref{lem:Thread from cover} to coherent sequences of infinite width is false. In fact, we will see that, if $\kappa < \lambda$ are infinite, regular cardinals, then there is consistently a $\square(\lambda, \kappa)$-sequence $\mathcal{C} = \langle \mathcal{C}_\alpha \mid \alpha < \lambda \rangle$ such that, for every limit ordinal $\alpha < \lambda$, $\alpha = \bigcup \mathcal{C}_\alpha$.
\subsection{Infinite width}

We move on now to consider square sequences of possibly infinite width. We first show that the existence of such square sequences necessarily implies the failure of some simultaneous reflection for stationary sets consisting of ordinals of sufficiently high cofinality.

\begin{thm} \label{thm:high_cofinality_reflection}
Suppose $\kappa < \lambda$ are cardinals, $\lambda$ is regular, and $\square(\lambda, < \kappa)$ holds. Then $\Refl(< \kappa, S)$ fails for all stationary $S \subseteq S^\lambda_{\geq \kappa}$.
\end{thm}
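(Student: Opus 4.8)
The plan is to assume $\Refl(<\kappa, S)$ and derive a thread through $\mathcal{C}$, contradicting $\square(\lambda, <\kappa)$. Fix a $\square(\lambda,<\kappa)$ sequence $\mathcal{C}=\langle\mathcal{C}_\alpha\mid\alpha<\lambda\rangle$ and a stationary $S\subseteq S^\lambda_{\geq\kappa}$. The first observation, and the place where the cofinality hypothesis is used in an essential way, is that every ordinal at which a stationary subset of $S$ can reflect has cofinality at least $\kappa$: if $\cf(\delta)<\kappa$, then a club in $\delta$ of order type $\cf(\delta)$ has all of its accumulation points of cofinality $<\kappa$, so $S^\lambda_{\geq\kappa}\cap\delta$ is nonstationary in $\delta$. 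Consequently, at any relevant reflection point $\delta$ we have $|\mathcal{C}_\delta|<\kappa\leq\cf(\delta)$, so $D_\delta:=\bigcap\mathcal{C}_\delta$ is a club in $\delta$ and, for every $\gamma\in\acc(D_\delta)$ and every $C\in\mathcal{C}_\delta$, $C\cap\gamma\in\mathcal{C}_\gamma$. This is the feature that lets the finite-width reasoning survive: intersecting the (now possibly infinitely many, but fewer than $\cf(\delta)$ many) clubs of $\mathcal{C}_\delta$ does not destroy clubness.

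For $\alpha\in S$ and $\beta<\lambda$, let $p_\beta(\alpha)$ be the set of $\beta^{\mathrm{th}}$ elements of those $C\in\mathcal{C}_\alpha$ with $\otp(C)>\beta$; since $|\mathcal{C}_\alpha|<\kappa\leq\cf(\alpha)$, this is a subset of $\alpha$ of size $<\kappa$, and in particular $\sup p_\beta(\alpha)<\alpha$. For $v\in[\lambda]^{<\kappa}$ put $S^\beta_v=\{\alpha\in S\mid p_\beta(\alpha)=v\}$. The first real step is to generalize Lemma \ref{lem:non intersecting values to non-reflecting sets}: if $\langle v_k\mid k<\ell\rangle$ are $<\kappa$ many values with $\bigcap_{k<\ell}v_k=\emptyset$ and each $S^\beta_{v_k}$ is stationary, then $\{S^\beta_{v_k}\mid k<\ell\}$ does not reflect simultaneously. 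Indeed, at a reflection point $\delta$ one sets $u=p_\beta(\delta)$ and $C=D_\delta=\bigcap\mathcal{C}_\delta$; using coherence along $\acc(C)$ as above, $p_\beta(\gamma)\supseteq u$ for every $\gamma\in S\cap\acc(C)$ past $\sup u$, so the $k$ with $v_k\not\supseteq u$ witnesses that $S^\beta_{v_k}$ omits a final segment of the club $\acc(C)$ and hence does not reflect at $\delta$. This is exactly the point at which clubness of $D_\delta$, i.e.\ the cofinality hypothesis, is needed.

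Granting $\Refl(<\kappa, S)$, this lemma shows that for each $\beta$ no $<\kappa$ many stationary fibers can have empty intersection. After pressing down $\alpha\mapsto\sup p_\beta(\alpha)$ to concentrate the values of $p_\beta$ below a fixed ordinal on a stationary set, the stationary fibers $V_\beta=\{v\mid S^\beta_v\text{ is stationary}\}$ therefore form a family of sets of size $<\kappa$ with the property that any $<\kappa$ of them meet; a short argument (if no point of a fixed $v_0\in V_\beta$ lay in every member of $V_\beta$, choose for each of the $<\kappa$ points $x\in v_0$ a member of $V_\beta$ omitting $x$ and intersect the resulting subfamily) then yields a common point $\gamma_\beta\in\bigcap V_\beta$. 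This is the analogue, for infinite width, of the single stabilized value produced in the proof of Theorem \ref{thm:(Lambie-Hanson)-simul refl}.

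The final and hardest step is to assemble $\langle\gamma_\beta\mid\beta<\lambda,\ \beta\text{ limit}\rangle$ into a thread. For limit $\beta$ the point $\gamma_\beta$ is an accumulation point of each witnessing club, so one expects, for $\beta_0<\beta_1$, that $\gamma_{\beta_0}$ is captured inside the club realizing $\gamma_{\beta_1}$; the goal is to thin to an unbounded $A\subseteq\lambda$ along which $A\cap\alpha$ is contained in a single member of $\mathcal{C}_\alpha$, and then to invoke Corollary \ref{cor:unbounded_cover_implies_thread}. I expect this to be the main obstacle, for two reasons. First, because the width is infinite, the clubs realizing the various $\gamma_\beta$ need not cohere automatically—this is precisely the phenomenon behind the failure, noted after Theorem \ref{thm:non-reflecting-from-square}, of the infinite-width analogue of Lemma \ref{lem:Thread from cover}—so the crude union used in the width-one and finite-width arguments is unavailable, and one must also ensure that such witnessing clubs exist on a stationary set at all. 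Second, one must ensure the $\gamma_\beta$ are cofinal in $\lambda$. I anticipate organizing the construction as a tree of bounded coherent approximations ordered by end-extension, in the spirit of Lemma \ref{lem:weak thread}: reflection is used to extend approximations (giving height $\lambda$), while the strong intersection property of the $V_\beta$, together with the high-cofinality fact that $<\kappa$ clubs intersect to a club, is used to keep each level of size $<\kappa$, so that Lemma \ref{kurepa_lemma} furnishes a cofinal branch whose union threads $\mathcal{C}$.
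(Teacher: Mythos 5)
Your overall strategy --- derive a thread from $\Refl(<\kappa,S)$ --- is the same as the paper's, and your generalization of Lemma \ref{lem:non intersecting values to non-reflecting sets} (exploiting that $\bigcap\mathcal{C}_\delta$ is club in $\delta$ when $\cf(\delta)\geq\kappa>|\mathcal{C}_\delta|$) is sound. But the proof is not complete, and there are two genuine problems. First, a counting gap: your $V_\beta$ may well be empty. Pressing down on $\alpha\mapsto\sup p_\beta(\alpha)$ confines the values of $p_\beta$ to $[\rho]^{<\kappa}$ for a fixed $\rho<\lambda$, but $|[\rho]^{<\kappa}|$ can exceed $\lambda$ (e.g.\ $\lambda=\omega_2$, $\kappa=\omega_1$, $2^{\aleph_0}\geq\aleph_2$), so $S$ can be a union of nonstationary fibers with no single fiber $S^\beta_v$ stationary; in that case $\bigcap V_\beta$ carries no information, and even when $V_\beta\neq\emptyset$, a point $\gamma_\beta\in\bigcap V_\beta$ does not give you $\gamma_\beta\in p_\beta(\alpha)$ for club-many $\alpha\in S$, which is what any assembly step would need. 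Second, and decisively, the final step is not an argument but a declared intention (``I anticipate organizing the construction as a tree\dots''); you correctly identify that the union trick from widths $1$ and $<\omega$ is unavailable, but you do not supply its replacement, and that step is where the content of the theorem lives.

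The paper closes this with a different choice of stationary sets that avoids both problems. For $\beta\in S$ set $D_\beta=\bigcap_{C\in\mathcal{C}_\beta}\acc(C)$ (club in $\beta$ since $\cf(\beta)\geq\kappa$), for $\alpha<\lambda$ set $S_\alpha=\{\beta\in S\mid\alpha\in D_\beta\}$, and let $A=\{\alpha<\lambda\mid S_\alpha\text{ is stationary}\}$. Since $\beta\mapsto\min(D_\beta\setminus\alpha_0)$ is regressive and ordinal-valued, Fodor makes $A$ unbounded with no cardinal-arithmetic issues. The key move you are missing is then: for $\gamma\in A$ and $X\in[A\cap\gamma]^{<\kappa}$, reflect $\{S_\alpha\mid\alpha\in X\cup\{\gamma\}\}$ simultaneously at a single $\delta$ and fix a single $E\in\mathcal{C}_\delta$; choosing $\beta_\alpha\in\acc(E)\cap S_\alpha$ for each $\alpha$ forces $\alpha\in\acc(E)$ via $E\cap\beta_\alpha\in\mathcal{C}_{\beta_\alpha}$ and $\alpha\in D_{\beta_\alpha}$, so the one club $E\cap\gamma\in\mathcal{C}_\gamma$ already contains all of $X$. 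Diagonalizing against the $<\kappa$ clubs of $\mathcal{C}_\gamma$ upgrades this to a single $C\in\mathcal{C}_\gamma$ with $A\cap\gamma\subseteq C$, and Corollary \ref{cor:unbounded_cover_implies_thread} applied to $A$ produces the thread. No tree of approximations, no appeal to Lemma \ref{kurepa_lemma}, and no fibering of $S$ by $<\kappa$-sized values is needed.
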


\begin{proof}
Suppose for sake of contradiction that $S \subseteq S^\lambda_{\geq \kappa}$ is stationary and $\Refl(< \kappa, S)$ holds. Let $\mathcal{C} = \langle \mathcal{C}_\alpha \mid \alpha < \lambda \rangle$ be a $\square(\lambda, < \kappa)$-sequence. For all $\beta \in S$, let $D_\beta = \bigcap_{C \in \mathcal{C}_\beta} \acc(C)$. Since $S \subseteq S^\lambda_{\geq \kappa}$ and $\mathcal{C}$ is a $\square(\lambda, < \kappa)$-sequence, $D_\beta$ is club in $\beta$ for all $\beta \in S$. For all $\alpha < \lambda$, let $S_\alpha = \{\beta \in S \mid \alpha \in D_\beta\}$. Let $A = \{\alpha < \lambda \mid S_\alpha$ is stationary$\}$.

\begin{claim}
$A$ is unbounded in $\lambda$.
\end{claim}

\begin{proof}
Fix $\alpha_0 < \lambda$. For $\beta \in S \setminus (\alpha_0 + 1)$, let $f(\beta) = \min(D_\beta \setminus \alpha_0)$. $f$ is regressive, so, by Fodor's Lemma, we can find $\alpha$ and a stationary $S' \subseteq S$ such that $f(\beta) = \alpha$ for all $\beta \in S'$. Then $\alpha \in A \setminus \alpha_0$. Since $\alpha_0$ was arbitrary, $A$ is unbounded in $\lambda$.
\end{proof}

\begin{claim} \label{claim:small-covering}
Suppose $\gamma \in A$ and $X \in [A \cap \gamma]^{<\kappa}$. Then there is $C \in \mathcal{C}_\gamma$ such that $X \subseteq C$.
\end{claim}

\begin{proof}
By assumption, $\mathcal{S} = \{S_\alpha \mid \alpha \in X \cup \{\gamma\} \}$ reflects simultaneously. Suppose $\mathcal{S}$ reflects simultaneously at $\delta$. Fix $E \in \mathcal{C}_\delta$. For every $\alpha \in X \cup \{\gamma\}$, let $\beta_\alpha \in \acc(E) \cap S_\alpha$. Then, since $\alpha \in D_{\beta_\alpha}$ and $E \cap \beta_\alpha \in \mathcal{C}_{\beta_\alpha}$, we have $\alpha \in \acc(E)$. In particular, $E \cap \gamma \in \mathcal{C}_\gamma$ and $X \subseteq E \cap \gamma$.
\end{proof}

\begin{claim} \label{claim:large-covering}
Suppose $\gamma \in A$. Then there is $C \in \mathcal{C}_\gamma$ such that $A \cap \gamma \subseteq C$. 
\end{claim}

\begin{proof}
Suppose not. For every $C \in \mathcal{C}_\gamma$, find $\alpha_C \in (A \cap \gamma) \setminus C$. Let $X = \{\alpha_C \mid C \in \mathcal{C}_\gamma \}$. Then $X \in [A \cap \gamma]^{<\kappa}$, so, by Claim \ref{claim:small-covering}, there is $C \in \mathcal{C}_\gamma$ such that $X \subseteq C$. In particular, $\alpha_C \in C$, contradicting our assumption.
\end{proof}

By Corollary \ref{cor:unbounded_cover_implies_thread} applied to $A$, $\mathcal{C}$ has a thread, contradicting the assumption that $\mathcal{C}$ is a $\square(\lambda, < \kappa)$-sequence.
\end{proof}

We now introduce a dichotomy for square sequences and show that, in each case, we get further failure of simultaneous stationary reflection.

\begin{defn}
Suppose $\lambda$ is a regular, uncountable cardinal and $\mathcal{C} = \langle \mathcal{C}_\alpha \mid \alpha < \lambda \rangle$ is a coherent sequence of any width. Let $A_{\mathcal{C}}$ be the set of $\alpha < \lambda$ such that there is a club $D_\alpha \subseteq \lambda$ such that, for every $\beta \in D_\alpha$, $\alpha \in \bigcup_{C \in \mathcal{C}_\beta} \acc(C)$. $\mathcal{C}$ is $\emph{full}$ if $A_{\mathcal{C}}$ is unbounded in $\lambda$.
\end{defn}

\begin{thm} \label{fullReflectionThm}
Suppose $\kappa < \lambda$ are uncountable cardinals, with $\lambda$ regular, and suppose there is a full $\square(\lambda, < \kappa)$-sequence. Then $\Refl(< \kappa, S)$ fails for all stationary $S \subseteq \lambda$.
\end{thm}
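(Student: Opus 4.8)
The plan is to mirror the proof of Theorem \ref{thm:high_cofinality_reflection}: I assume toward a contradiction that $S\subseteq\lambda$ is stationary and $\Refl(<\kappa, S)$ holds, and I derive a thread through the given full $\square(\lambda,<\kappa)$-sequence $\mathcal{C}$, contradicting that it is a $\square(\lambda,<\kappa)$-sequence. Fullness hands me the unbounded set $A_{\mathcal{C}}$ together with, for each $\alpha\in A_{\mathcal{C}}$, a club $D_\alpha\subseteq\lambda$ such that every $\beta\in D_\alpha$ accumulates some club of $\mathcal{C}_\beta$ at $\alpha$. The first step is to convert this ``some club'' information into a single, Fodor-stabilized datum: for $\alpha\in A_{\mathcal{C}}$ the set $S\cap D_\alpha$ is stationary, and for each $\beta\in S\cap D_\alpha$ I may choose a witness $C(\alpha,\beta)\in\mathcal{C}_\beta$ with $\alpha\in\acc(C(\alpha,\beta))$, so that $C(\alpha,\beta)\cap\alpha\in\mathcal{C}_\alpha$ by coherence. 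Since $|\mathcal{C}_\alpha|<\kappa<\lambda$ and the nonstationary ideal on $\lambda$ is $\lambda$-complete, I can fix a designated club $C^*_\alpha\in\mathcal{C}_\alpha$ and a stationary $S_\alpha\subseteq S\cap D_\alpha$ on which $C(\alpha,\beta)\cap\alpha=C^*_\alpha$ is constant. These $S_\alpha$ will play the role that the sets $\{\beta\in S\mid\alpha\in D_\beta\}$ played in Theorem \ref{thm:high_cofinality_reflection}.

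Next I would prove two covering claims paralleling Claims \ref{claim:small-covering} and \ref{claim:large-covering}. The large-covering claim---that for every $\gamma\in A_{\mathcal{C}}$ there is a single $C\in\mathcal{C}_\gamma$ with $A_{\mathcal{C}}\cap\gamma\subseteq C$---reduces to the small-covering claim by the pigeonhole argument of Claim \ref{claim:large-covering}: if no club of $\mathcal{C}_\gamma$ covered $A_{\mathcal{C}}\cap\gamma$, one reads off a set $X\in[A_{\mathcal{C}}\cap\gamma]^{<\kappa}$ of counterexamples and contradicts small covering. Once large covering holds, Corollary \ref{cor:unbounded_cover_implies_thread} applied to the unbounded set $A_{\mathcal{C}}$ produces a thread, finishing the proof. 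So everything comes down to the small-covering claim: given $\gamma\in A_{\mathcal{C}}$ and $X\in[A_{\mathcal{C}}\cap\gamma]^{<\kappa}$, produce a single $C\in\mathcal{C}_\gamma$ with $X\subseteq C$.

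To prove small covering I would feed the family $\{S_\alpha\mid\alpha\in X\cup\{\gamma\}\}$, of size $<\kappa$, into $\Refl(<\kappa, S)$; by the remark following the definition of reflection it reflects simultaneously on a stationary set of points, so I may choose a reflection point $\delta$ lying in the club $\bigcap_{\alpha\in X\cup\{\gamma\}}D_\alpha$. The point of this choice is that $\delta\in D_\alpha$ forces each $\alpha\in X\cup\{\gamma\}$ to be an accumulation point of some club of $\mathcal{C}_\delta$, while simultaneous reflection gives, for any fixed $E\in\mathcal{C}_\delta$ and each $\alpha$, a point $\beta_\alpha\in\acc(E)\cap S_\alpha$, so that $E\cap\beta_\alpha\in\mathcal{C}_{\beta_\alpha}$ while the designated witness $C(\alpha,\beta_\alpha)\in\mathcal{C}_{\beta_\alpha}$ restricts to $C^*_\alpha$ at $\alpha$. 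If, for a suitable choice of $E$, I can arrange $X\cup\{\gamma\}\subseteq\acc(E)$, then $\gamma\in\acc(E)$ gives $E\cap\gamma\in\mathcal{C}_\gamma$ and $X\subseteq\acc(E)\cap\gamma\subseteq E\cap\gamma$, which is the desired single covering club.

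I expect this last alignment to be the main obstacle, and it is precisely where the present hypotheses are weaker than those of Theorem \ref{thm:high_cofinality_reflection}. There, membership of $\alpha$ in $D_{\beta_\alpha}=\bigcap_{C\in\mathcal{C}_{\beta_\alpha}}\acc(C)$ meant that $\alpha$ accumulated \emph{every} club of $\mathcal{C}_{\beta_\alpha}$, in particular the restriction $E\cap\beta_\alpha$, which immediately yielded $\alpha\in\acc(E)$. Here fullness provides only that $\alpha$ accumulates \emph{some} club $C(\alpha,\beta_\alpha)$, which need not equal $E\cap\beta_\alpha$, so the implication $\alpha\in\acc(E)$ is not free. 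The work will be to use the designated clubs $C^*_\alpha$ together with the freedom in choosing $E\in\mathcal{C}_\delta$ and the points $\beta_\alpha$ to force the witnessing clubs to agree with restrictions of a single $E$: concretely, I would try to select $E$ so that at stationarily many $\beta\in S_\alpha\cap\acc(E)$ the witness $C(\alpha,\beta)$ coincides with $E\cap\beta$, recovering $\alpha\in\acc(E)$ for all $\alpha\in X\cup\{\gamma\}$ simultaneously. Should a single $E$ prove unattainable, the fallback is to aim only for covering at unboundedly many ``good'' points $\nu$ and invoke the weak-thread machinery of Lemmas \ref{lem:very weak thread} and \ref{lem:weak thread} with $T_0=A_{\mathcal{C}}$, which requires only one-sided covering of $A_{\mathcal{C}}\cap\nu$ by a single club of $\mathcal{C}_\nu$.
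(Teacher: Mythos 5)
Your plan stalls exactly where you predict it will, and the obstacle you identify is a genuine gap, not a technicality. The small-covering claim is the entire content of the argument in your approach, and neither of your two proposed repairs is carried out. The Fodor stabilization to a designated $C^*_\alpha$ does not help: for $\beta_\alpha\in S_\alpha\cap\acc(E)$ you know that \emph{some} $C(\alpha,\beta_\alpha)\in\mathcal{C}_{\beta_\alpha}$ accumulates at $\alpha$ and restricts to $C^*_\alpha$, but the only club of $\mathcal{C}_{\beta_\alpha}$ you can relate to $E$ is $E\cap\beta_\alpha$, and there is no mechanism forcing these to coincide for even one $\alpha$, let alone for all $\alpha\in X\cup\{\gamma\}$ with a single $E$. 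The fallback via Lemmas \ref{lem:very weak thread} and \ref{lem:weak thread} does not escape this either, since one-sided covering of $A_{\mathcal{C}}\cap\nu$ by a single member of $\mathcal{C}_\nu$ at unboundedly many $\nu$ is essentially the large-covering claim again, which you reduce to the same unproven small-covering step. So as written the proposal is an honest diagnosis of why the proof of Theorem \ref{thm:high_cofinality_reflection} does not transfer (universal versus existential accumulation), together with an unfilled hole at its center.

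The paper's proof sidesteps the covering strategy entirely, and the difference is instructive. Rather than trying to derive a thread from the reflection hypothesis, it directly constructs a non-reflecting family indexed by the individual clubs of a single $\mathcal{C}_\alpha$: for $\alpha\in A_{\mathcal{C}}$ with $\mathcal{C}_\alpha=\{C_{\alpha,i}\mid i<\mu\}$, it sets $S_{\alpha,i}=\{\beta\in S\mid \forall C\in\mathcal{C}_\beta,\ C\cap\alpha\neq C_{\alpha,i}\}$. The universal quantifier here is the point: if $\{S_{\alpha,i}\cap D_\alpha\mid i<\mu\}$ reflected at some $\delta$, then $\delta\in D_\alpha$, fullness gives $C\in\mathcal{C}_\delta$ with $\alpha\in\acc(C)$, coherence pins down $C\cap\alpha=C_{\alpha,i^*}$ for a single $i^*$, and then $\acc(C)\setminus(\alpha+1)$ is a club in $\delta$ disjoint from $S_{\alpha,i^*}$. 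The stationarity of the sets $S_{\alpha,i}$ (for all but boundedly many $\alpha$) is where the threading contradiction lives, and it is obtained not by a covering argument but by a tree argument: if unboundedly many $\alpha$ had a non-stationary $S_{\alpha,i_\alpha}$, the clubs $C_{\alpha,i_\alpha}$ ordered by end-extension would form a tree of height $\lambda$ whose antichains have size $<\kappa$ (any $\kappa$-many elements cohere pairwise through a common $\mathcal{C}_\beta$ by pigeonhole on $|\mathcal{C}_\beta|<\kappa$), and Lemma \ref{kurepa_lemma} then yields a cofinal branch, i.e.\ a thread. If you want to complete your write-up, you should abandon the covering route and adopt this indexing by clubs together with the Kurepa-tree argument.
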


\begin{proof}
Suppose $\mathcal{C} = \langle \mathrm{C}_\alpha \mid \alpha < \lambda \rangle$ is a full $\square(\lambda, < \kappa)$-sequence, and let $S \subseteq \lambda$ be stationary. Let $A_{\mathcal{C}}$ be as given in the definition of fullness. Find an unbounded $A \subseteq A_{\mathcal{C}}$ and a fixed $\mu < \kappa$ such that, for all $\alpha \in A$, $|\mathcal{C}_\alpha| = \mu$. For each $\alpha \in A$, let $\mathcal{C}_\alpha = \{C_{\alpha, i} \mid i < \mu\}$, and let $D_\alpha$ be club in $\lambda$ such that, for all $\beta \in D_\alpha$, $\alpha \in \bigcup_{C \in \mathcal{C}_\beta} \acc(C)$.

For all $\alpha \in A$ and $i < \mu$, let $S_{\alpha, i}$ be the set of $\beta \in S$ such that, for all $C \in \mathcal{C}_\beta$, $C \cap \alpha \neq C_{\alpha, i}$.

\begin{claim}
For all but boundedly many $\alpha \in A$, for all $i < \mu$, $S_{\alpha, i}$ is stationary.
\end{claim}

\begin{proof}
Suppose for sake of contradiction that there is an unbounded $B \subseteq A$ and, for all $\alpha \in B$, an $i_\alpha < \mu$ such that $S_{\alpha, i_\alpha}$ is non-stationary. For all $\alpha \in B$, fix a club $E_\alpha$ in $\lambda$ such that $E_\alpha \cap S_{\alpha, i_\alpha} = \emptyset$. Define a tree $T$ as follows. Elements of $T$ are clubs $C_{\alpha, i_\alpha}$ for $\alpha \in B$. Order $T$ by end-extension, i.e., for all $\alpha_0 < \alpha_1$, both in $B$, let $C_{\alpha_0, i_{\alpha_0}} <_T C_{\alpha_1, i_{\alpha_1}}$ iff $C_{\alpha_1, i_{\alpha_1}} \cap \alpha_0 = C_{\alpha_0, i_{\alpha_0}}$.

This definition clearly makes $T$ a tree, and $|T| = \lambda$. We claim that $T$ has no antichain of size $\kappa$. To see this, let $T'$ be a subset of $T$ of size $\kappa$. Suppose $T' = \{C_{\alpha_\xi, i_{\alpha_\xi}} \mid \xi < \kappa\}$. Find $\beta \in S \cap \bigcap_{\xi < \kappa} E_{\alpha_\xi}$. For all $\xi < \kappa$, $\beta \in S \setminus T_{\alpha_\xi, i_{\alpha_\xi}}$, so there is $C \in \mathcal{C}_\beta$ such that $C \cap \alpha_\xi = C_{\alpha_\xi, i_{\alpha_\xi}}$. Since $|\mathcal{C}_\beta| < \kappa$, there are $\xi < \zeta < \kappa$ and $C \in \mathcal{C}_\beta$ such that $C \cap \alpha_\xi = C_{\alpha_\xi, i_{\alpha_\xi}}$ and $C \cap \alpha_\zeta = C_{\alpha_\zeta, i_{\alpha_\zeta}}$. Assuming without loss of generality that $\alpha_\xi < \alpha_\zeta$, we then have $C_{\alpha_\zeta, i_{\alpha_\zeta}} \cap \alpha_\xi = C_{\alpha_\xi, i_{\alpha_\xi}}$, so $T'$ is not an antichain.

In particular, every level of $T$ has size $< \kappa$. $T$ is thus a tree of height $\lambda$ with levels of size $<\kappa$, so, by Lemma \ref{kurepa_lemma}, $T$ has a cofinal branch, $b$. But then $\bigcup b$ is a thread for $\mathcal{C}$, which is a contradiction.
\end{proof}

Fix $\alpha \in A$ such that, for all $i < \mu$, $S_{\alpha, i}$ is stationary. For $i < \mu$, let $S_i = S_{\alpha, i} \cap D_\alpha$. We claim that $\{S_i \mid i < \mu \}$ does not reflect simultaneously. To see this, suppose for sake of contradiction that $\{S_i \mid i < \mu\}$ reflects simultaneously at $\delta$. Since each $S_i$ is a subset of $D_\alpha$, we must have $\delta \in D_\alpha$. There is thus $C \in \mathcal{C}_\delta$ such that $\alpha \in \acc(C)$. Fix $i^* < \mu$ such that $C \cap \alpha = C_{\alpha, i^*}$. Let $D = \acc(C) \setminus (\alpha + 1)$. Then $D$ is club in $\delta$ and, for all $\beta \in D$, $C \cap \beta \in \mathcal{C}_\beta$, so $D \cap S_{i^*} = \emptyset$. In particular, $S_{i^*}$ does not reflect at $\delta$, which is a contradiction.
\end{proof}

\begin{thm}
Suppose $\kappa < \lambda$ are regular, uncountable cardinals and there is a $\square(\lambda, < \kappa)$-sequence that is not full. Then $\Refl(2, \lambda)$ fails.
\end{thm}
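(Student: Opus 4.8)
The plan is to adapt the argument for the finite-width case, Theorem~\ref{thm:non-reflecting-from-square}, using the failure of fullness as a substitute for the two places where finiteness of the width was essential there. First I would unwind the hypothesis: fix a $\square(\lambda,<\kappa)$-sequence $\mathcal{C}$ that is not full, so $A_{\mathcal{C}}$ is bounded, and fix $\alpha^{*}<\lambda$ with $A_{\mathcal{C}}\cap[\alpha^{*},\lambda)=\emptyset$, chosen with $\cf(\alpha^{*})\geq\kappa$. Unwinding the definition of $A_{\mathcal{C}}$, this says exactly that for every $\alpha\in[\alpha^{*},\lambda)$ the set $R_{\alpha}=\{\beta<\lambda\mid\alpha\notin\bigcup_{C\in\mathcal{C}_{\beta}}\acc(C)\}$ is stationary. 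A pressing-down, using $\cf(\alpha^{*})\geq\kappa>|\mathcal{C}_{\beta}|$, then yields a stationary set and a fixed $\eta<\alpha^{*}$ over which every club in the relevant $\mathcal{C}_{\beta}$ "jumps over" the interval $(\eta,\alpha^{*})$; I expect this to be the concrete combinatorial consequence of non-fullness that drives the argument.

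As in Theorem~\ref{thm:non-reflecting-from-square}, after shrinking $S$ to indecomposable ordinals and performing the analogues of the Case~1/Case~3 reductions (so that the relevant clubs have order type equal to their index), I would track the $\beta$-th elements $p_{\beta}(\alpha)=\{\text{the }\beta\text{-th element of }C\mid C\in\mathcal{C}_{\alpha}\}$, now a set of size $<\kappa$. The point that makes the \emph{verification} survive infinite width is that one may argue one club at a time: if $C\in\mathcal{C}_{\delta}$, its $\beta$-th element is $c$, and $\gamma\in\acc(C)$ with $\gamma>c$, then $C\cap\gamma\in\mathcal{C}_{\gamma}$, so $c\in p_{\beta}(\gamma)$. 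Hence for any threshold $\gamma_{0}$ the two \emph{universal} sets $S_{0}=\{\alpha\in S\mid p_{\beta}(\alpha)\subseteq\gamma_{0}\}$ and $S_{1}=\{\alpha\in S\mid p_{\beta}(\alpha)\cap\gamma_{0}=\emptyset\}$ cannot reflect simultaneously at any $\delta$ of uncountable cofinality: reflection of $S_{0}$ at $\delta$ forces the $\beta$-th element of \emph{every} club in $\mathcal{C}_{\delta}$ below $\gamma_{0}$, while reflection of $S_{1}$ forces all of them $\geq\gamma_{0}$, and $\mathcal{C}_{\delta}\neq\emptyset$. This is the mechanism of Lemma~\ref{lem:non intersecting values to non-reflecting sets}, but phrased so as to avoid forming $\bigcap\mathcal{C}_{\delta}$, which is club only when $\cf(\delta)\geq\kappa$; the single-club formulation disposes of every uncountable cofinality at once.

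It therefore remains to produce a level $\beta$ and a threshold $\gamma_{0}$ for which \emph{both} $S_{0}$ and $S_{1}$ are stationary. Following Case~2 of Theorem~\ref{thm:non-reflecting-from-square}, I would use the dichotomy: either some $\beta$ is "spreading" (for every $\gamma$, stationarily many $\alpha\in S$ have $p_{\beta}(\alpha)\cap\gamma=\emptyset$), which makes $S_{1}$ stationary for every threshold, or for every $\beta$ the $\beta$-th elements are confined below some $\gamma_{\beta}$ on a club, and then the analogues of the first lemma of Case~2 and of Lemma~\ref{lem:Thread from cover} assemble a club $F$ with $F\cap\alpha\subseteq\bigcup\mathcal{C}_{\alpha}$ for all $\alpha\in S\cap\acc(F)$, and finally a thread via Corollary~\ref{cor:unbounded_cover_implies_thread}, contradicting $\square(\lambda,<\kappa)$.

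The hard part, and essentially the only new content beyond the finite-width proof, is twofold, and it is precisely here that non-fullness must enter, since for \emph{full} sequences these sub-steps provably fail (the remark after Theorem~\ref{thm:non-reflecting-from-square} exhibits a full sequence with $\alpha=\bigcup\mathcal{C}_{\alpha}$ for every limit $\alpha$, defeating Lemma~\ref{lem:Thread from cover}). First, in the thread-from-cover step the finite-width proof Fodor-stabilizes the covering number $k_{\alpha}<\omega$ and uses that deleting one club strictly lowers it; for an infinite stabilized value this collapses, and I expect to repair it by a tree-of-covering-clubs argument in the style of Lemma~\ref{kurepa_lemma} and the tree in Theorem~\ref{fullReflectionThm}, whose levels are forced to have size $<\kappa$ exactly because the boundedness of $A_{\mathcal{C}}$ prevents the covering clubs from accumulating at common points above $\alpha^{*}$. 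Second, in the spreading case I still must extract a threshold making $S_{0}$ stationary as well, which amounts to stabilizing a $<\kappa$-sized value that a priori ranges over too large a space; here I would invoke the jump-over-$(\eta,\alpha^{*})$ reduction from the first paragraph to confine the relevant $\beta$-th elements to a set of size $<\lambda$, restoring the Fodor stabilization. Controlling the width in these two stabilizations is the main obstacle I anticipate.
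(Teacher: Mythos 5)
Your proposal is not a proof: at the two places you yourself identify as ``the hard part'' --- extracting a thread from a covering club when the stabilized covering number is infinite, and stabilizing the $<\kappa$-sized value $p_\beta(\alpha)$ to make $S_0$ stationary --- you offer only speculative repair strategies (``a tree-of-covering-clubs argument,'' ``confine the relevant $\beta$-th elements'') without carrying them out, and it is not at all clear they can be carried out. Indeed, you are implicitly trying to prove the stronger statement that $\Refl(2,S)$ fails for \emph{every} stationary $S\subseteq\lambda$ (you shrink an arbitrary given $S$), which is essentially Question \ref{q1} of the paper and is left open there; the theorem only asserts the existence of \emph{some} pair of stationary sets that fail to reflect simultaneously, and the freedom to choose that pair is exactly what the actual proof exploits. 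A further slip: the remark after Theorem \ref{thm:non-reflecting-from-square} does not assert that the exhibited sequence with $\alpha=\bigcup\mathcal{C}_\alpha$ is full, so your claim that the covering-to-thread step ``provably fails'' only for full sequences, and hence that non-fullness should rescue it, is unsupported.

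The paper's proof is much shorter and uses non-fullness in a completely different way. Since $A_{\mathcal{C}}$ is bounded, for every sufficiently large $\alpha$ the set $S_\alpha=\{\beta<\lambda\mid\alpha\notin\bigcup_{C\in\mathcal{C}_\beta}\acc(C)\}$ is stationary. On the other hand, for $\beta\in S^\lambda_{\geq\kappa}$ the set $\bigcap_{C\in\mathcal{C}_\beta}\acc(C)$ is club in $\beta$ (here the width $<\kappa$ is used), so pressing down $\beta\mapsto\min\bigl(\bigcap_{C\in\mathcal{C}_\beta}\acc(C)\setminus\alpha_0\bigr)$ shows that for unboundedly many $\alpha$ the set $T_\alpha=\{\beta<\lambda\mid\alpha\in\bigcap_{C\in\mathcal{C}_\beta}\acc(C)\}$ is also stationary. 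Picking one such $\alpha$ beyond the bound on $A_{\mathcal{C}}$, the pair $\{S_\alpha,T_\alpha\}$ cannot reflect simultaneously at any $\delta$: fix a single $C\in\mathcal{C}_\delta$ and observe that if $\alpha\in\acc(C)$ then $\acc(C)\setminus(\alpha+1)$ is disjoint from $S_\alpha$, while if $\alpha\notin\acc(C)$ it is disjoint from $T_\alpha$. No threading lemma, covering argument, or stabilization of $p_\beta$ is needed. Your single-club verification mechanism in the second paragraph is in the same spirit as this last step, but the construction of the two stationary sets --- the heart of the matter --- is missing from your proposal.
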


\begin{proof}
  Let $\mathcal{C} = \langle \mathcal{C}_\alpha \mid \alpha < \lambda \rangle$ be a $\square(\lambda, < \kappa)$-sequence that is not full. Then $A_{\mathcal{C}}$ is bounded in $\lambda$, so there is $\alpha' < \lambda$ such that, for all $\alpha' \leq \alpha < \lambda$, there is a stationary $S_\alpha \subseteq \lambda$ such that, for all $\beta \in S_\alpha$, $\alpha \not\in \bigcup_{C \in \mathcal{C}_\beta} \acc(C)$. For each $\alpha < \lambda$, let $T_\alpha = \{\beta < \lambda \mid \alpha \in \bigcap_{C \in \mathcal{C}_\beta} \acc(C)\}$. Let $B = \{\alpha < \lambda \mid T_\alpha$ is stationary$\}$.

\begin{claim}
$B$ is unbounded in $\lambda$.
\end{claim} 

\begin{proof}
Fix $\alpha_0 < \lambda$. We will find $\alpha \in B \setminus \alpha_0$. Note that, for all $\beta \in S^\lambda_{\geq \kappa}$, $\bigcap_{C \in \mathcal{C}_\beta} \acc(C)$ is club in $\beta$. Define a function $f:S^\lambda_{\geq \kappa} \setminus (\alpha_0 + 1) \rightarrow \lambda$ by letting $f(\beta) = \min((\bigcap_{C \in \mathcal{C}_\beta} \acc(C)) \setminus \alpha_0)$. $f$ is a regressive function on a stationary set, so there is a stationary $T$ and a fixed $\alpha$ such that, for all $\beta \in T$, $f(\beta) = \alpha$. Then $\alpha \in B \setminus \alpha_0$.
\end{proof}
Fix $\alpha \in B \setminus \alpha'$. We claim that $S_\alpha$ and $T_\alpha$ do not reflect simultaneously. To see this, fix $\delta < \lambda$ of uncountable cofinality, and fix $C \in \mathcal{C}_\delta$. Let $D = \acc(C) \setminus (\alpha + 1)$. If $\alpha \in \acc(C)$, then $D \cap S_\alpha = \emptyset$, so $S_\alpha$ does not reflect at $\delta$. If $\alpha \not\in \acc(C)$, then $D \cap T_\alpha = \emptyset$, so $T_\alpha$ does not reflect at $\delta$.
\end{proof}

Using the ideas of the previous proofs we can show that proper forcing cannot add a thread to any narrow square sequence.

\begin{cor}
Let $\kappa < \lambda$ be cardinals, with $\lambda$ uncountable and regular. A proper forcing cannot add a thread to a $\square(\lambda, <\kappa)$-sequence.
\end{cor}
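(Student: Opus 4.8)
The plan is to argue by contradiction: assuming $\mathbb{P}$ is proper and that some $p \in \mathbb{P}$ forces a name $\dot{D}$ to be a thread through $\mathcal{C}$, I would manufacture, back in $V$, a genuine thread through $\mathcal{C}$, contradicting the assumption that $\mathcal{C}$ is a $\square(\lambda, <\kappa)$ sequence. The engine for producing a thread in $V$ is Corollary \ref{cor:unbounded_cover_implies_thread}: it is enough to find in $V$ an unbounded $A \subseteq \lambda$ such that, for each $\alpha \in A$, some $C \in \mathcal{C}_\alpha$ covers $A \cap \alpha$. This target is weaker than a coherent branch, so the clubs witnessing the covering will not need to cohere globally, only to cover. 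The input from properness is the standard fact that for a club of countable $M \prec H_\theta$ (with $\theta$ large and $\mathbb{P}, \mathcal{C}, \dot{D}, p \in M$) there is an $(M, \mathbb{P})$-generic condition $q_M \leq p$, and that such a $q_M$ forces $M[\dot{G}] \cap \mathrm{Ord} = M \cap \mathrm{Ord}$. Writing $\delta_M = \sup(M \cap \lambda) \in S^\lambda_\omega$, genericity together with ``$\dot{D}$ is club'' gives $q_M \Vdash \delta_M \in \acc(\dot{D})$; since $\dot{D}$ is forced to be a thread, $q_M \Vdash \dot{D} \cap \delta_M \in \mathcal{C}_{\delta_M}$, and after strengthening I may fix $C_M \in \mathcal{C}_{\delta_M}$ with $q_M \Vdash \dot{D} \cap \delta_M = \check{C}_M$. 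The set $A_0 = \{\delta_M \mid M\}$ is stationary in $V$ (it contains all $\delta \in S^\lambda_\omega$ closed under a fixed family of Skolem functions for $H_\theta$), and it is the set of ``master-condition points'' at which the generic thread is pinned to a ground-model club.

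Next I would assemble these data, in $V$, into the tree of attempts $T = \bigcup_{\alpha < \lambda} U_\alpha$, where $U_\alpha = \{C \in \mathcal{C}_\alpha \mid \exists r \leq p,\ r \Vdash \alpha \in \acc(\dot{D}) \wedge \dot{D} \cap \alpha = \check{C}\}$, ordered by end-extension. Exactly as in the proofs of Lemma \ref{lem:weak thread} and Theorem \ref{fullReflectionThm}, coherence of $\mathcal{C}$ makes $T$ a tree whose predecessors of a node $C \in U_\alpha$ are precisely the $C \cap \beta$ for $\beta \in \acc(C)$, and the width bound $|\mathcal{C}_\alpha| < \kappa$ keeps each $U_\alpha$ of size $<\kappa$. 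The stationary set $A_0$ guarantees that $T$ has height $\lambda$, and in $V[G]$ the actual thread furnishes a cofinal branch of $T$. One useful concrete observation is that antichains of $T$ correspond to antichains of $\mathbb{P}$: if $\{C_\xi \mid \xi < \kappa\}$ is an antichain in $T$ with witnessing conditions $r_\xi$, then any two $r_\xi, r_\zeta$ must be incompatible, since a common extension would force both $C_\xi$ and $C_\zeta$ to be initial segments of the single club $\dot{D}$ at accumulation points, hence comparable. The aim is then to perform a Fodor-style stabilization of minimal covering families, in the spirit of Lemma \ref{lem:Thread from cover}, to pass to a subtree whose levels have size $<\kappa$, apply Kurepa's Lemma \ref{kurepa_lemma} to extract a cofinal branch in $V$, and feed the resulting unbounded covering set into Corollary \ref{cor:unbounded_cover_implies_thread}.

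The hard part is precisely the passage from the stationarily many local decisions to a single object in $V$. Distinct conditions may force genuinely distinct threads, so the clubs $C_M$ attached to different points of $A_0$ need not cohere, and a naive branch through $T$ need not exist; this is why I route the argument through the weaker covering hypothesis of Corollary \ref{cor:unbounded_cover_implies_thread} rather than through coherence. The real work, and the place where properness must be used in earnest, is in controlling the level sizes of the stabilized subtree: via the antichain correspondence above, a level of size $\kappa$ yields a size-$\kappa$ family of pairwise incompatible conditions each pinning an incomparable initial segment of $\dot{D}$, and I would aim to refute such a configuration by capturing it inside a single countable elementary submodel and comparing the pinned segments against the segment determined by that model's master condition. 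I expect reconciling the non-uniqueness of the generic thread with the narrowness required by Kurepa's Lemma, rather than any single routine step, to be the principal obstacle, and the case $\kappa = \omega$ of finite width to be the most delicate instance of it.
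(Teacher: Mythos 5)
Your setup is fine as far as it goes, but the argument is not completed, and the place where it stops is precisely where it breaks. The tree $T=\bigcup_\alpha U_\alpha$ of pinned initial segments cannot be fed to Lemma \ref{kurepa_lemma}: although each $U_\alpha\subseteq\mathcal{C}_\alpha$ has size $<\kappa$, the tree-theoretic levels of $T$ need not be small. A node $C\in U_\beta$ witnessed by $r$ need not have any predecessor in $U_\alpha$ for $\alpha<\beta$ (there is no reason for $r$ to force $\alpha\in\acc(\dot{D})$, nor even for $\alpha$ to lie in $\acc(C)$), so $T$ can have $\lambda$ many minimal nodes, i.e.\ a level of size $\lambda$. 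Your antichain correspondence only converts a large level of $T$ into a large antichain of $\mathbb{P}$, and properness places no bound whatsoever on antichains, so nothing is refuted. The proposed repair via a countable $M\prec H_\theta$ does not work either: for uncountable $\kappa$ a $\kappa$-sized antichain is not captured by a countable model, and even for the members $r_\xi\in M$, an $(M,\mathbb{P})$-generic $q_M\leq p$ need not be compatible with any particular $r_\xi$, so the segment $C_M$ pinned by $q_M$ cannot be played off against the $C_\xi$. More fundamentally, the target is too strong: you are trying to reconstruct an actual thread in $V$ (via Corollary \ref{cor:unbounded_cover_implies_thread}), and the non-coherence of the clubs $C_M$ attached to different models, which you correctly identify as the obstacle, is not a technicality but the reason this route does not obviously close.

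The paper's proof avoids building anything in $V[G]$ back down into $V$. For each $\alpha<\lambda$ and $i<\mu_\alpha$ it considers $S_{\alpha,i}=\{\beta\in S^\lambda_\omega\mid\forall j<\mu_\beta,\ C_{\beta,j}\cap\alpha\neq C_{\alpha,i}\}$, and invokes the Claim inside the proof of Theorem \ref{fullReflectionThm} to see that for all sufficiently large $\alpha$ and all $i$ these sets are stationary. The Kurepa-style tree argument lives entirely there, in $V$, and the level bound comes for free: the clubs $E_\alpha$ witnessing non-stationarity lie in $V$, a single $\beta\in S^\lambda_\omega\cap\bigcap_\xi E_{\alpha_\xi}$ meets all of them, and the pigeonhole on $|\mathcal{C}_\beta|<\kappa$ collapses any would-be antichain of size $\kappa$ --- this is exactly the mechanism your version lacks. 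Then, if $D$ is a thread in $V^{\mathbb{P}}$ and $\alpha$ is a sufficiently large element of $\acc(D)$ with $D\cap\alpha=C_{\alpha,i}$, the club $\acc(D)\setminus(\alpha+1)$ is disjoint from $S_{\alpha,i}$, so $\mathbb{P}$ destroys the stationarity of a subset of $S^\lambda_\omega$ and hence is not proper. If you want to salvage your approach, the lesson is to aim not at a thread in $V$ but at a single ground-model stationary set of countable-cofinality ordinals that the thread must avoid.
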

\begin{proof}
Let $\mathcal{C} = \langle C_{\alpha, i} \mid \alpha < \lambda, i < \mu_\alpha < \kappa\rangle$ be a $\square(\lambda, < \kappa)$-sequence. For $\alpha < \lambda$ 
and $i < \mu_\alpha$, let $S_{\alpha, i} = \{\beta \in S^{\lambda}_{\omega} \mid \forall j < \mu_\beta,\, C_{\beta, j} \cap \alpha \neq C_{\alpha, i}\}$. By the proof 
of Theorem \ref{fullReflectionThm}, we know that, for all sufficiently large $\alpha < \lambda$, for all $i < \mu_\alpha$, $S_{\alpha, i}$ is stationary. 

Let $\mathbb{P}$ be a forcing notion and let us assume that $\mathbb{P}$ adds a thread through $\mathcal{C}$. In $V^{\mathbb{P}}$, let $D$ be this thread, and let $\alpha < \lambda$ be a sufficiently large accumulation point of $D$. Then there is $i$ such that $D\cap \alpha = C_{\alpha, i}$. $S_{\alpha, i} \cap \acc(D) \setminus (\alpha + 1) = \emptyset$, and therefore $S_{\alpha, i}$ is non-stationary in $V^{\mathbb{P}}$. In particular, $\mathbb{P}$ does not preserve stationary subsets of $S^\lambda_\omega$ and therefore is not proper.  
\end{proof}
\begin{rem}
A $\square(\lambda,<\lambda)$-sequence can be $\lambda$-c.c.\ and, in particular, its threading forcing (which will be introduced in Section \ref{sec: forcing preliminaries}) can preserve any stationary subset of $\lambda$. It can also be $\sigma$-closed (unless there is $\mu < \lambda$ such that $\mu^\omega \geq \lambda$), and in this case the threading forcing will be proper.
\end{rem}
\section{Forcing preliminaries} \label{sec: forcing preliminaries}
\subsection{Adding a square sequence}
In this subsection we will describe the standard forcing for adding a $\square(\lambda)$-sequence. This forcing notion (in a slightly different form) is due to Jensen, who used it in order to separate the combinatorial principles $\square_{\omega_1}$ and $\square_{\omega_1, 2}$. We will state but not prove the basic properties of the forcing notion and refer the reader to \cite{Chris-SimulReflection} for further information and discussion.

Throughout this subsection, $\lambda$ is an arbitrary uncountable regular cardinal such that $\lambda^{<\lambda} = \lambda$.

Let us define $\mathbb{S}(\lambda, 1)$ which is a forcing notion that adds a $\square(\lambda)$-sequence using bounded approximations.
\begin{defn}
Let $\mathbb{S}(\lambda, 1)$ be the following forcing notion. A condition $s\in\mathbb{S}(\lambda, 1)$ is a sequence of the form $s = \langle s_i \mid i \leq \gamma\rangle$ where:
\begin{enumerate}
\item $\gamma < \lambda$;
\item for all $\alpha \leq \gamma$, if $\alpha$ is a limit ordinal then $s_\alpha$ is a closed unbounded subset of $\alpha$;
\item for all $\alpha < \beta < \lambda$, if $\alpha\in \acc(s_\beta)$, then $s_\alpha = s_\beta \cap \alpha$.
\end{enumerate}
The elements of $\mathbb{S}(\lambda, 1)$ are ordered by end-extension.
\end{defn} 
By genericity arguments, if $S\subseteq\mathbb{S}(\lambda, 1)$ is a generic filter, then $\mathcal{C} = \bigcup S$ is a $\square(\lambda) = \square(\lambda, 1)$-sequence (thus the index $1$). In this case, we do not distinguish between $\mathcal{C}$ and $S$ and say that $\mathcal{C}$ is a generic $\square(\lambda)$-sequence.

\begin{defn}
Let $\mathcal{C}$ be a $\square(\lambda)$-sequence. Then $\mathbb{T}(\mathcal{C})$ is the threading forcing for $\mathcal{C}$. The elements of $\mathbb{T}(\mathcal{C})$ are the members of $\mathcal{C}$ and the order of the forcing is end-extension.
\end{defn} 

When $\mathcal{C}$ is clear from the context, we omit it. Let us remark that, if $\mathcal{C}$ is a generic $\square(\lambda)$-sequence, then $\mathbb{T}(\mathcal{C})$ is non-atomic and, if $T\subseteq \mathbb{T}(\mathcal{C})$ is a generic filter, then $\bigcup T$ is a thread through $\mathcal{C}$. In this case, the forcing $\mathbb{S}(\lambda, 1)\ast\mathbb{T}(\mathcal{C})$ contains a dense $\lambda$-directed closed subset.

\begin{defn}
  Suppose that $T$ is a stationary subset of $\lambda$. $\mathrm{CU}(T)$ is the standard poset 
  to shoot a club through $T$, i.e., the poset consisting of 
  closed, bounded subsets $c$ of $\lambda$ such that $c \subseteq T$, ordered by end-extension.
\end{defn}

\begin{defn}
  Let $\mathbb{T}$ be a forcing notion, and let $S\subseteq \lambda$. 
  \begin{enumerate}
    \item $S$ is $\mathbb{T}$-\emph{fragile} if $\Vdash_{\mathbb{T}} ``\check{S}$ is non-stationary$."$ 
  \end{enumerate}
\end{defn}

\begin{defn}
  Let $\mathbb{T}$ be a forcing notion. An \emph{iteration to kill $\mathbb{T}$-fragile subsets of 
$\lambda$} is a forcing iteration $\langle \mathbb{P}_\alpha, \dot{\mathbb{Q}}_\beta \mid 
\alpha \leq \delta, \beta < \delta \rangle$, 
  where $\delta$ is an ordinal, satisfying:
  \begin{enumerate}
    \item the iteration is taken with supports of size $<\lambda$;
    \item for every $\beta < \delta$, there is a $\mathbb{P}_\beta$-name $\dot{S}_\alpha$ for a 
      $\mathbb{T}$-fragile subset of $\lambda$ such that $\Vdash_{\mathbb{P}_\beta}``\dot{Q}_\beta 
      = \mathrm{CU}(\lambda \setminus \dot{S}_\alpha)."$
  \end{enumerate}
\end{defn}

\begin{lem}\label{lem: square+killing stat+thread is kappa closed}
  Let $\mathbb{S}$ be a forcing notion and let $\mathbb{T}$ be a forcing notion in $V^\mathbb{S}$. Assume that $\mathbb{S} \ast \dot{\mathbb{T}}$ contains a $\lambda$-directed closed dense subset.
  In $V^\mathbb{S}$, let $\mathbb{P}$ be an iteration to kill $\mathbb{T}$-fragile subsets of $\lambda$. Then $\mathbb{S}\ast\dot{\mathbb{P}}\ast\dot{\mathbb{T}}$ contains a $\lambda$-directed closed dense subset. Moreover, after forcing with $\mathbb{S}\ast\dot{\mathbb{T}}$, $\mathbb{P}$ contains a dense $\lambda$-directed closed subset.
\end{lem}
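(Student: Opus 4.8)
The plan is to prove both assertions simultaneously by induction on the length of the iteration $\mathbb{P}$, using as the engine a product–commutation argument that slides the threading forcing $\mathbb{T}$ to the front of the iteration; once $\mathbb{T}$ has been applied, every club-shooting kills a set that has become non-stationary and is therefore $\lambda$-directed closed. First I would record two preliminary facts. Since $\mathbb{S}\ast\dot{\mathbb{T}}$ has a $\lambda$-directed closed dense subset it is $<\lambda$-distributive, so it adds no new bounded subsets of $\lambda$ over $V$; applying a sandwich argument to $V\subseteq V^{\mathbb{S}}\subseteq V^{\mathbb{S}\ast\dot{\mathbb{T}}}$ shows that $\mathbb{S}$ alone, and $\mathbb{T}$ over $V^{\mathbb{S}}$, likewise add no new bounded subsets of $\lambda$. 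Second, if $S\subseteq\lambda$ is non-stationary, witnessed by a club $E\subseteq\lambda\setminus S$, then the poset $\mathbb{Q}$ shooting a club through $\lambda\setminus S$ (bounded closed subsets of $\lambda\setminus S$ under end-extension) has the dense $\lambda$-directed closed subset $\{c\in\mathbb{Q}\mid\max(c)\in E\}$: pairwise compatible conditions form an end-extension chain, and the union of such a chain of length $<\lambda$ together with its supremum, which lies on $E$ and hence off $S$, is again a condition of this dense set.

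The heart of the argument is a single-step commutation. Working over $W=V^{\mathbb{S}\ast\dot{\mathbb{P}}\restriction\eta}$, I would assume inductively that $\mathbb{S}\ast\dot{\mathbb{P}}\restriction\eta\ast\dot{\mathbb{T}}$ has a $\lambda$-directed closed dense subset $D_\eta$ and that $\mathbb{S}\ast\dot{\mathbb{P}}\restriction\eta\ast\dot{\mathbb{T}}\cong\mathbb{S}\ast\dot{\mathbb{T}}\ast\dot{\mathbb{R}}\restriction\eta$, where $\mathbb{R}\restriction\eta$ is the $<\lambda$-support iteration over $V^{\mathbb{S}\ast\mathbb{T}}$ of the reinterpreted club-shootings. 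Let $\mathbb{Q}_\eta$ be the $\eta$-th iterand, shooting a club through $\lambda\setminus S_\eta$ for a $\mathbb{T}$-fragile $S_\eta\in W$. The square sequence $\mathcal{C}$ is untouched by $\mathbb{P}\restriction\eta$ and by $\mathbb{Q}_\eta$, so $\mathbb{T}$ is literally a ground-model poset of $W$ and of $W^{\mathbb{Q}_\eta}$; and by the inductive distributivity, $V^{\mathbb{S}\ast\mathbb{P}\restriction\eta\ast\mathbb{T}}$ adds no new bounded subsets of $\lambda$, so the sandwich $V\subseteq W\subseteq W^{\mathbb{T}}$ shows that $\mathbb{T}$ over $W$ adds none either, whence $\mathbb{Q}_\eta$ has the same conditions in $W$ and in $W^{\mathbb{T}}$. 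Therefore $\mathbb{Q}_\eta\ast\dot{\mathbb{T}}$ and $\mathbb{T}\ast\dot{\mathbb{Q}}_\eta$ are both isomorphic to the product $\mathbb{Q}_\eta\times\mathbb{T}$, and since $S_\eta$ is non-stationary in $W^{\mathbb{T}}$ by $\mathbb{T}$-fragility, the reinterpretation $\mathbb{Q}'_\eta$ is $\lambda$-directed closed by the preliminary fact. Consequently $\mathbb{S}\ast\dot{\mathbb{P}}\restriction\eta\ast\dot{\mathbb{Q}}_\eta\ast\dot{\mathbb{T}}\cong\mathbb{S}\ast\dot{\mathbb{P}}\restriction\eta\ast\dot{\mathbb{T}}\ast\dot{\mathbb{Q}}'_\eta$, and $D_\eta\ast\dot{\mathbb{Q}}'_\eta$ is a $\lambda$-directed closed dense subset of the successor forcing, completing the successor step of both inductive clauses.

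At limit stages the commutation $\mathbb{S}\ast\dot{\mathbb{P}}\restriction\eta\ast\dot{\mathbb{T}}\cong\mathbb{S}\ast\dot{\mathbb{T}}\ast\dot{\mathbb{R}}\restriction\eta$ again reduces to the no-new-bounded-subsets fact, which guarantees that the $<\lambda$-supports and the iterands are computed identically on both sides, so the $<\lambda$-support limits match. Since every $\mathbb{Q}'_\eta$ is $\lambda$-directed closed and $\mathbb{R}$ uses supports of size $<\lambda$, the standard preservation theorem for $<\lambda$-support iterations yields that $\mathbb{R}$ is densely $\lambda$-directed closed over $V^{\mathbb{S}\ast\mathbb{T}}$. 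As $\mathbb{R}$ is exactly $\mathbb{P}$ reinterpreted there, this is the ``moreover.'' Finally $\mathbb{S}\ast\dot{\mathbb{P}}\ast\dot{\mathbb{T}}\cong\mathbb{S}\ast\dot{\mathbb{T}}\ast\dot{\mathbb{R}}$ is the two-step composition of the $\lambda$-directed closed dense subset of $\mathbb{S}\ast\dot{\mathbb{T}}$ with the $\lambda$-directed closed $\mathbb{R}$, and a composition of $\lambda$-directed closed forcings is $\lambda$-directed closed, giving the first assertion.

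The main obstacle is the bookkeeping that keeps all the reinterpreted objects genuinely equal: at each stage one must verify simultaneously that passing from $W$ to $W^{\mathbb{T}}$ adds no new bounded subsets of $\lambda$ (so the club-shootings, their supports, and the names defining $S_\eta$ are unchanged) and does not disturb the square sequence $\mathcal{C}$ (so $\mathbb{T}$ remains a ground poset and the product lemma applies). Both are consequences of the inductive distributivity, but they are precisely what the induction is also establishing, which is why the commutation and the directed closure must be carried along together. The delicate identification is that the model in which $S_\eta$ becomes non-stationary is $V^{\mathbb{S}\ast\mathbb{T}\ast\mathbb{R}\restriction\eta}=V^{\mathbb{S}\ast\mathbb{P}\restriction\eta\ast\mathbb{T}}$, the very model in which $\mathbb{T}$-fragility of $S_\eta$ applies; getting this alignment right at every stage is what makes each reinterpreted iterand $\lambda$-directed closed.
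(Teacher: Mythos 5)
Your argument is correct in substance, but it packages the proof quite differently from the paper, which does not induct on the length of $\mathbb{P}$ at all: it directly exhibits a single dense $\lambda$-directed closed subset $\mathbb{U}$ of the three-step iteration $\mathbb{S}\ast\dot{\mathbb{P}}\ast\dot{\mathbb{T}}$, namely the conditions $(s,\dot{p},\dot{t})$ such that $(s,\dot{t})$ lies in the given dense closed subset of $\mathbb{S}\ast\dot{\mathbb{T}}$, the domain of $\dot{p}$ and each coordinate $\dot{p}(\alpha)=\check{c}_\alpha$ are decided in $V$, and $(s,\dot{p}\restriction\alpha,\dot{t})$ forces $\max(\check{c}_\alpha)$ into a club $\dot{C}_\alpha$ witnessing the non-stationarity of the $\alpha^{\mathrm{th}}$ fragile set after threading; the verification is then delegated to a reference. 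Your commutation scheme $\mathbb{S}\ast\dot{\mathbb{P}}\ast\dot{\mathbb{T}}\cong\mathbb{S}\ast\dot{\mathbb{T}}\ast\dot{\mathbb{R}}$ isolates the same mechanism (after threading, each club-shooting acquires the dense $\lambda$-directed closed subset $\{c\mid\max(c)\in\dot{C}_\alpha\}$), is arguably more conceptual, and yields the ``moreover'' clause for free. The trade-off is that the two steps you wave through as standard are exactly where the paper's explicit $\mathbb{U}$ is doing the work: the iterands of $\mathbb{R}$ are only \emph{densely} $\lambda$-directed closed, so the preservation theorem for $<\lambda$-support iterations does not apply off the shelf --- one must pass to conditions all of whose coordinates are decided and forced into the dense subsets and prove \emph{that} set is dense by a recursion along the iteration; and the limit stages of the isomorphism require matching names, not merely supports, which again forces you down to such decided conditions. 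Neither point is a gap in the idea, but they are the actual content of the proof. One small correction: the lemma concerns an abstract $\mathbb{S}$ and $\mathbb{T}$, so your appeal to the square sequence $\mathcal{C}$ being ``untouched'' is out of place; all you need is that $\mathbb{T}$ is a fixed poset of $V^{\mathbb{S}}$ and hence of every further extension, together with the distributivity you already established.
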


\begin{proof}
In $V^{\mathbb{S}}$, let $\langle \mathbb{P}_\alpha, \dot{\mathbb{Q}}_\beta \mid \alpha \leq \delta, \beta < \delta \rangle$ be an iteration 
to kill $\mathbb{T}$-fragile subsets of $\lambda$, with 
$\mathbb{P} = \mathbb{P}_\delta$. 
For $\beta < \delta$, let $\dot{S}_\beta$ be the $\mathbb{P}_\beta$-name for a $\mathbb{T}$-fragile subset of $\lambda$ used to define $\dot{\mathbb{Q}}_\beta$. Let $\dot{C}_\beta$ be a 
$\mathbb{P}_\beta \ast \mathbb{T}$-name for a club in $\lambda$ disjoint from $\dot{S}_\beta$. Working in $V$, we will abuse notation and interpret $\dot{S}_\beta$ and $\dot{C}_\beta$ as $\mathbb{S} \ast \dot{\mathbb{P}}_\beta$ and $\mathbb{S} \ast \dot{\mathbb{P}}_\beta \ast \dot{\mathbb{T}}$-names, respectively. Let $\mathbb{U}_0$ be the $\lambda$-directed closed subset of $\mathbb{S} \ast \dot{\mathbb{T}}$. Let $\mathbb{U}$ be the set of $(s, \dot{p}, \dot{t}) \in \mathbb{S} \ast \dot{\mathbb{P}} \ast \dot{\mathbb{T}}$ such that:
\begin{itemize}
    \item{$(s, \dot{t}) \in \mathbb{U}_0$;}
    \item{there is $a \in V$ such that $s \Vdash_{\mathbb{S}} ``\dom(\dot{p}) = \check{a}"$;}
    \item{for all $\alpha \in a$, there is a closed, bounded $c_\alpha \subset \lambda$ such that $(s, \dot{p} \restriction \alpha) \Vdash_{\mathbb{S} \ast \dot{\mathbb{P}}_\alpha} 
      ``\dot{p}(\alpha) = \check{c}_\alpha"$;}
    \item{for all $\alpha \in a$, $(s, \dot{p} \restriction \alpha, \dot{t}) \Vdash_{\mathbb{S} \ast \dot{\mathbb{P}}_\alpha \ast \dot{\mathbb{T}}} ``\max(\check{c}_\alpha) \in \dot{C}_\alpha."$}
  \end{itemize}
  The proof that $\mathbb{U}$ is a dense, $\lambda$-directed closed subset can be found in \cite{Chris-Reflection_2}. The final sentence in the statement of the Lemma 
  follows immediately.
\end{proof}

\begin{cor}\label{cor: square with no fragile sets}
There is a forcing notion $\mathbb{Q}$ such that:
\begin{enumerate}
  \item $|\mathbb{Q}| = 2^\lambda$ and $\mathbb{Q}$ is $\lambda^+$-c.c.
  \item There is $\mathbb{T}\in V^{\mathbb{Q}}$ of cardinality $\lambda$ such that $\mathbb{Q}\ast\dot{\mathbb{T}}$ contains a $\lambda$-directed closed dense subset.
\item $\Vdash_{\mathbb{Q}} ``\square(\lambda)."$
\item In $V^{\mathbb{Q}}$, there are no $\mathbb{T}$-fragile stationary subsets of $\lambda$.
\end{enumerate}
\end{cor}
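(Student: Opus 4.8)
The plan is to take $\mathbb{Q} = \mathbb{S}(\lambda,1) \ast \dot{\mathbb{P}}$, where $\dot{\mathbb{P}}$ is a $<\lambda$-support iteration, performed in $V^{\mathbb{S}(\lambda,1)}$, each of whose steps shoots a club through the complement of a $\mathbb{T}$-fragile subset of $\lambda$; here $\dot{\mathbb{T}}$ is the threading forcing for the generic $\square(\lambda)$-sequence $\mathcal{C}$ added by $\mathbb{S}(\lambda,1)$. I would fix the length of the iteration to be $2^\lambda$ together with a bookkeeping function enumerating, cofinally often, all nice $\mathbb{P}$-names for subsets of $\lambda$: at stage $\beta$, if the bookkept name denotes a set $S$ that is $\mathbb{T}$-fragile in $V^{\mathbb{S} \ast \mathbb{P}_\beta}$, let $\dot{\mathbb{Q}}_\beta$ shoot a club through $\lambda \setminus S$, and otherwise let $\dot{\mathbb{Q}}_\beta$ be trivial. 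Since every step shoots a club through the complement of a $\mathbb{T}$-fragile set, Lemma \ref{lem: square+killing stat+thread is kappa closed} applies directly and gives property (2): $\mathbb{Q} \ast \dot{\mathbb{T}}$ contains a $\lambda$-directed closed dense subset, and $\dot{\mathbb{T}}$ is forced to have cardinality $\lambda$, since its conditions are the members of $\mathcal{C}$.

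Property (1) is a routine count. Using $\lambda^{<\lambda} = \lambda$ (whence $2^{<\lambda} = \lambda$) we get $|\mathbb{S}(\lambda,1)| = \lambda$, each iterand has $\le \lambda$ conditions (bounded closed subsets of $\lambda$), and the iteration has length $2^\lambda$ with supports of size $<\lambda$, so that $|\mathbb{Q}| = (2^\lambda)^{<\lambda} = 2^\lambda$. For the chain condition, each iterand has size $\le \lambda$ and is therefore $\lambda^+$-Knaster, and a $<\lambda$-support iteration of $\lambda^+$-Knaster posets is $\lambda^+$-Knaster when $\lambda^{<\lambda} = \lambda$: given $\lambda^+$ conditions, thin out via the $\Delta$-system lemma so that their supports form a $\Delta$-system with root $r$, and then, since there are only $\lambda$ possible restrictions to $r$, so that they all agree on $r$; any two such conditions are compatible because their supports meet only in $r$.

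For property (3) I would show that $\mathcal{C}$ remains threadless, hence a $\square(\lambda)$-sequence, in $V^{\mathbb{Q}}$. The ingredients are: $\mathbb{T}$ is nonatomic and separative over $V^{\mathbb{S}(\lambda,1)}$ (as noted for generic $\mathcal{C}$), and both properties concern only the fixed poset $\mathbb{T} \in V^{\mathbb{S}(\lambda,1)}$, so they persist in $V^{\mathbb{Q}}$; and any coherent sequence has at most one thread, since if $D, E$ are threads then $D \cap \alpha = C_\alpha = E \cap \alpha$ for all $\alpha$ in the club $\acc(D) \cap \acc(E)$, whence $D = E$. Now suppose toward a contradiction that some condition forces a $\mathbb{Q}$-name $\dot{D}$ to be a thread. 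Forcing below it with $\mathbb{Q} \ast \dot{\mathbb{T}}$ yields the generic thread $E$; as $\mathbb{Q} \ast \dot{\mathbb{T}}$ is $<\lambda$-distributive, $\lambda$ stays regular and $E$ is genuinely a thread. By uniqueness $E = D \in V^{\mathbb{Q}}$, so the generic object for $\mathbb{T}$ lies in its ground model $V^{\mathbb{Q}}$; but a generic filter for a nonatomic separative poset is never in the ground model, a contradiction. Thus $V^{\mathbb{Q}} \models \square(\lambda)$.

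Property (4) is where I expect the main difficulty. Once a club through $\lambda \setminus S$ has been shot, $S$ stays nonstationary, since the remaining forcing adds no bounded sets and preserves cofinalities (by the moreover clause of Lemma \ref{lem: square+killing stat+thread is kappa closed}) and so does not destroy the shot club; and since $\cf(2^\lambda) > \lambda$ while supports have size $<\lambda$, every subset of $\lambda$ in $V^{\mathbb{Q}}$ already appears in some $V^{\mathbb{S}(\lambda,1) \ast \mathbb{P}_\beta}$ and is later handled by the bookkeeping. The delicate point is that $\mathbb{T}$-fragility is a statement about the final model $V^{\mathbb{Q}}$, whereas the iteration can only test it at the intermediate stage $\beta$ where the name is considered; I must therefore show that a set $S \in V^{\mathbb{S}(\lambda,1) \ast \mathbb{P}_\beta}$ that is $\mathbb{T}$-fragile in $V^{\mathbb{Q}}$ is already $\mathbb{T}$-fragile in $V^{\mathbb{S}(\lambda,1) \ast \mathbb{P}_\beta}$. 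This reflection should follow from the tail of the iteration being $<\lambda$-distributive over $V^{\mathbb{S}(\lambda,1) \ast \mathbb{P}_\beta} \ast \dot{\mathbb{T}}$ (the Lemma applied to the tail), so that it preserves stationarity and cannot make $S$ nonstationary over $V^{\mathbb{Q}} \ast \dot{\mathbb{T}}$ without $S$ being nonstationary over $V^{\mathbb{S}(\lambda,1) \ast \mathbb{P}_\beta} \ast \dot{\mathbb{T}}$ already. Carrying out the required commutation of $\mathbb{T}$ past the tail — which is precisely what the explicit $\lambda$-directed closed dense subset built in Lemma \ref{lem: square+killing stat+thread is kappa closed} is designed to furnish — is the crux of the proof.
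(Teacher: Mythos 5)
Your construction is exactly the paper's: $\mathbb{Q} = \mathbb{S}(\lambda,1) \ast \dot{\mathbb{P}}$, with $\dot{\mathbb{P}}$ the ${<}\lambda$-support iteration of length $2^\lambda$ killing $\mathbb{T}$-fragile sets with bookkeeping, and properties (1), (2), (4) handled essentially as you do (the paper declares everything except (3) routine, resting on Lemma \ref{lem: square+killing stat+thread is kappa closed}; your treatment of the ``fragility is tested at stage $\beta$ but must hold in $V^{\mathbb{Q}}$'' issue --- via the dense $\lambda$-directed closed subset of the tail after forcing with $\mathbb{T}$, together with commuting $\mathbb{T}$ past the tail, which is just a product since $\mathbb{T}$ lives in $V^{\mathbb{S}}$ --- is the intended argument). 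The one place you genuinely diverge is (3), the only item the paper actually proves. The paper's argument: if $\mathbb{P}$ added a thread, then $\mathbb{P} \times \mathbb{P}$ (which is $\lambda$-distributive over $V^{\mathbb{S}}$, since $\mathbb{P}$ has a dense $\lambda$-directed closed subset after forcing with $\mathbb{T}$) would add two mutually generic, hence distinct, threads, and two distinct threads through a coherent sequence force $\cf(\lambda)=\omega$, a contradiction. Your argument instead forces with $\mathbb{T}$ over $V^{\mathbb{Q}}$, uses regularity of $\lambda$ in $V^{\mathbb{Q}\ast\dot{\mathbb{T}}}$ to get uniqueness of threads, concludes that the putative thread $D \in V^{\mathbb{Q}}$ equals the generic thread, and hence that the $\mathbb{T}$-generic filter (recoverable as the set of initial segments of its union) lies in $V^{\mathbb{Q}}$, contradicting nonatomicity of $\mathbb{T}$. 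Both arguments rest on the same distributivity input from Lemma \ref{lem: square+killing stat+thread is kappa closed}; yours needs only one copy of the threading forcing rather than a product of two copies of $\mathbb{P}$, while the paper's product trick is the one reused verbatim for the width-$2$ and indexed variants later in Section \ref{sec:consistency results}. I see no gap; the only nitpick is that in your $\lambda^+$-Knaster count one should first pass to conditions whose coordinates are decided (e.g.\ the dense set from Lemma \ref{lem: square+killing stat+thread is kappa closed}) before counting restrictions to the root of the $\Delta$-system, a standard refinement.
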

\begin{proof}
  We apply Lemma~\ref{lem: square+killing stat+thread is kappa closed} for $\mathbb{S} = \mathbb{S}(\lambda, 1)$, 
  $\mathbb{T} = \mathbb{T}(\mathcal{C})$, (where $\mathcal{C}$ is the generic 
  $\square(\lambda)$-sequence added by $\mathbb{S}$), and $\mathbb{P} = 
  \langle \mathbb{P}_\alpha, \dot{\mathbb{R}}_\beta \mid \alpha \leq 2^\lambda, \beta < 2^\lambda \rangle$ an iteration 
  to kill $\mathbb{T}$-fragile subsets of $\lambda$. 
  By standard bookkeeping arguments, we can arrange so that, for every $\mathbb{S} * \dot{\mathbb{P}}$-name 
  $\dot{S}$ for a $\mathbb{T}$-fragile subset of $\lambda$, there is an $\alpha < 2^\lambda$ 
  and a $\mathbb{P}_\alpha$-name $\dot{S}_\alpha$ such that:
  \begin{itemize}
    \item $\Vdash_{\mathbb{P}_\alpha}``\dot{R}_\alpha = \mathrm{CU}(\lambda \setminus \dot{S}_\alpha)$;
    \item $\Vdash_{\mathbb{P}}``\dot{S}_\alpha = \dot{S}."$
  \end{itemize}
  In particular, there are no $\mathbb{T}$-fragile stationary subsets of $\lambda$ 
  in $V^{\mathbb{S} \ast \dot{\mathbb{P}}}$. Let $\mathbb{Q} = \mathbb{S} \ast \dot{\mathbb{P}}$.

  The only part of the corollary that requires further argument is (3). It suffices to show that the iteration $\mathbb{P}$ cannot add a thread through the $\square(\lambda)$-sequence $\mathcal{C}$ generically added by $\mathbb{S}$. By Lemma \ref{lem: square+killing stat+thread is kappa closed}, after forcing over $V^{\mathbb{S}}$ with $\mathbb{T}$, $\mathbb{P}$ has a $\lambda$-directed closed dense subset. In particular, forcing with $\mathbb{P}\times\mathbb{P}$ in $V^{\mathbb{S}}$ preserves the regularity of $\lambda$. But if $\mathbb{P}$ adds a thread through $\mathcal{C}$, then $\mathbb{P}\times\mathbb{P}$ will add two distinct threads to $\mathcal{C}$, thus forcing $\cf(\lambda) = \omega$. 
\end{proof}

The discussion above translates with minimal changes to the case of square sequences with arbitrary width. 

\begin{defn}
Let $\lambda$ be a regular cardinal, and let $\kappa \leq \lambda$. $\mathbb{S}(\lambda, <\kappa)$ is the forcing notion for adding a $\square(\lambda, <\kappa)$-sequence using bounded approximations. A condition $s\in\mathbb{S}(\lambda, <\kappa)$ is a sequence of the form $s = \langle s_i \mid i \leq \gamma\rangle$ where
\begin{enumerate}
\item $\gamma < \lambda$.
\item For all $\alpha \leq \gamma$, if $\alpha$ is a limit ordinal, then $s_\alpha$ is a non-empty set of closed, unbounded subsets of $\alpha$, and $|s_\alpha| < \kappa$.
\item For all $\alpha < \beta \leq \gamma$ and all $C\in s_\beta$, if $\alpha \in \acc(C)$, then $C\cap \alpha \in s_\alpha$.
\end{enumerate}
The elements of $\mathbb{S}(\lambda, <\kappa)$ are ordered by end-extension.
\end{defn} 
\begin{defn}
Let $\mathcal{C}$ be a $\square(\lambda, <\kappa)$-sequence. Then $\mathbb{T}(\mathcal{C})$ is the threading forcing for $\mathcal{C}$. The elements of $\mathbb{T}(\mathcal{C})$ are the members of $\bigcup \mathcal{C}$, and the order of the forcing is end-extension.
\end{defn}

For a forcing $\mathbb{P}$ and a cardinal $\nu$, let $\mathbb{P}^\nu$ denote the full-support product of $\nu$ copies of $\mathbb{P}$.

\begin{lem}
  For every $\rho < \kappa$ the forcing $\mathbb{S}(\lambda, <\kappa)\ast \dot{\mathbb{T}}(\mathcal{C})^{\rho}$ has a $\kappa$-directed closed dense subset.
\end{lem}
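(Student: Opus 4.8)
The plan is to exhibit an explicit dense subset $\mathbb{U} \subseteq \mathbb{S}(\lambda,<\kappa) \ast \dot{\mathbb{T}}(\mathcal{C})^\rho$ consisting of conditions in which the $\rho$-many coordinates of the threading product have already been absorbed into the top level of the square approximation, and then to verify directly that $\mathbb{U}$ is dense and $\kappa$-directed closed. Concretely, I would let $\mathbb{U}$ be the set of conditions $(s,\vec{C})$ with $s = \langle s_i \mid i \le \gamma\rangle \in \mathbb{S}(\lambda,<\kappa)$ (so that $\gamma = \max\dom(s)$ is defined) and $\vec{C} = \langle C^\xi \mid \xi < \rho\rangle$, where, for each $\xi < \rho$, $s$ decides $\dot{t}^\xi$ to be a fixed club $C^\xi \in s_\gamma$. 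Thus on $\mathbb{U}$ all $\rho$ thread coordinates are members of the single set $s_\gamma$, and the ordering is by simultaneous end-extension of $s$ and of each $C^\xi$.

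For density, given an arbitrary $(s,\dot{\vec t})$, I would first extend $s$ to decide, for each $\xi<\rho$, $\dot{t}^\xi$ to be some member of $\bigcup\mathcal{C}$, i.e.\ a club lying in some level of the square approximation; extending $s$ further so that its top level $\gamma$ exceeds the suprema of these clubs and end-extending each of them to a club of $\gamma$ placed into $s_\gamma$, we reach a condition in $\mathbb{U}$ below the original one. The only points to check are that these end-extensions can be arranged to cohere with the lower levels and with one another, and that the width bound is respected; the latter holds because there are only $\rho<\kappa$ coordinates, and the former is the routine genericity bookkeeping already used to verify that $\mathbb{S}(\lambda,<\kappa)$ forces $\square(\lambda,<\kappa)$.

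The heart of the argument is $\kappa$-directed closure. Given a directed family $\{(s^a,\vec{C}^a) \mid a \in I\}$ with $|I|<\kappa$, directedness makes the conditions pairwise compatible, so $s^{*} = \bigcup_{a} s^a$ is a function and, for each $\xi<\rho$, $C^{*,\xi} = \bigcup_{a} C^{a,\xi}$ is a club in $\delta := \sup_a \max\dom(s^a)$. I would then take as lower bound $\bar{s} = s^{*} {}^\frown \langle s^*_\delta \rangle$ with top level $s^*_\delta = \{C^{*,\xi} \mid \xi<\rho\}$ and thread coordinates $\vec{C}^{*} = \langle C^{*,\xi} \mid \xi < \rho\rangle$ (if the supremum is attained the corresponding condition is already a lower bound). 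This is a legitimate condition: the top level is non-empty with $|s^*_\delta| \le \rho < \kappa$, and coherence at $\delta$ holds because each $C^{*,\xi}$ is a union of coherent threads, so for $\alpha\in\acc(C^{*,\xi})$ we have $\alpha < \max\dom(s^a)$ for some $a$ with $\alpha\in\acc(C^{a,\xi})$, whence $C^{*,\xi}\cap\alpha = C^{a,\xi}\cap\alpha \in s^a_\alpha = s^{*}_\alpha$ by coherence of $s^a$. By construction $\bar{s}$ end-extends every $s^a$ and each $C^{*,\xi}$ end-extends every $C^{a,\xi}$, so $(\bar{s},\vec{C}^{*}) \in \mathbb{U}$ lies below every member of the family.

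The main obstacle to anticipate is exactly the production of a coherent club at the new limit level $\delta$: this is the step where, as in the passage from $\mathbb{S}(\lambda,1)$ to $\mathbb{S}(\lambda,1)\ast\mathbb{T}(\mathcal{C})$ discussed earlier, the square-approximation forcing on its own would fail to be closed at limits of uncountable cofinality. It is the thread coordinates that rescue the construction, supplying the canonical coherent clubs $C^{*,\xi}$ to be placed into $s^*_\delta$; a single such coordinate already suffices to furnish a coherent club and keep $s^*_\delta$ non-empty, while the bound $\rho<\kappa$ is precisely what guarantees that inserting all $\rho$ of them leaves the width below $\kappa$.
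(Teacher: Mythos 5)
Your proposal is correct and is essentially the argument the paper has in mind: the paper states this lemma without proof, but its proof of the analogous amalgamation lemma for $\mathbb{S}^{\mathrm{ind}}(\lambda,\kappa)\ast\dot{\mathbb{T}}_i$ proceeds exactly as you do, via a dense tree-like set of conditions whose thread coordinates have been absorbed as clubs into the top level of the square approximation, with lower bounds obtained by taking unions of a directed (hence linearly ordered) family and installing the $\rho$ unioned threads (of which there are fewer than $\kappa$) as the new top level. The one step you state too casually is deciding all $\rho$ names ``first'' when $\rho$ is infinite, which requires either the $\lambda$-distributivity of $\mathbb{S}(\lambda,<\kappa)$ (via its $\lambda$-strategic closure) or interleaving the deciding of the names with the top-level absorption; both are standard and are covered by the bookkeeping you invoke.
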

\begin{cor}\label{cor: wide square with no fragile sets}
Let $\lambda$ be a regular cardinal, and let $\kappa \leq \lambda$. There is a forcing notion $\mathbb{Q}$ such that:
\begin{enumerate}
  \item $|\mathbb{Q}| = 2^\lambda$ and $\mathbb{Q}$ is $\lambda^+$-c.c.
  \item There is $\mathbb{T}\in V^{\mathbb{Q}}$ of cardinality $\lambda$ such that $\mathbb{Q}\ast\dot{\mathbb{T}}^\rho$ has a $\lambda$-directed closed dense subset for all $\rho < \kappa$.
\item $\Vdash_{\mathbb{Q}} ``\square(\lambda, <\kappa)."$
\item In $V^{\mathbb{Q}}$, there are no $\mathbb{T}$-fragile stationary subsets of $\lambda$.
\end{enumerate}
\end{cor}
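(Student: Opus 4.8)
The plan is to mirror the proof of Corollary~\ref{cor: square with no fragile sets}, replacing the width-$1$ square forcing by $\mathbb{S} = \mathbb{S}(\lambda, <\kappa)$ and its threading forcing $\mathbb{T} = \mathbb{T}(\mathcal{C})$, where $\mathcal{C}$ is the generic $\square(\lambda,<\kappa)$ sequence added by $\mathbb{S}$. I would set $\mathbb{Q} = \mathbb{S} \ast \dot{\mathbb{P}}$, where $\mathbb{P}$ is the iteration of length $2^\lambda$, taken with supports of size $<\lambda$, that kills all $\mathbb{T}$-fragile stationary subsets of $\lambda$ by shooting clubs through their complements, with the usual bookkeeping. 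Clauses (1), (2), and (4) should go through essentially as in the width-$1$ case. For (1), $|\mathbb{S}| \le 2^\lambda$ since $\lambda^{<\lambda} = \lambda$, the iteration has length $2^\lambda$ with small steps, and a standard $\Delta$-system argument gives the $\lambda^+$-c.c. For (2) and (4) I would invoke Lemma~\ref{lem: square+killing stat+thread is kappa closed}, but applied with $\mathbb{T}^\rho$ in place of $\mathbb{T}$: any $\mathbb{T}$-fragile set is also $\mathbb{T}^\rho$-fragile (non-stationarity is upward absolute, and $\mathbb{T}^\rho$ projects onto $\mathbb{T}$), and the preceding lemma supplies the hypothesis that $\mathbb{S} \ast \dot{\mathbb{T}}^\rho$ has a directed closed dense subset. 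The lemma then yields that $\mathbb{Q} \ast \dot{\mathbb{T}}^\rho = \mathbb{S} \ast \dot{\mathbb{P}} \ast \dot{\mathbb{T}}^\rho$ has a $\lambda$-directed closed dense subset, which is (2), while the bookkeeping gives (4) as before.

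The real content is clause (3): that $\mathbb{P}$ adds no thread through $\mathcal{C}$. Here the width-$1$ argument -- two distinct threads collapse $\cf(\lambda)$ to $\omega$ -- is no longer available, since for width $<\kappa$ one can consistently have up to $<\kappa$ threads with $\lambda$ remaining regular (indeed $\mathbb{T}^\rho$ adds $\rho$ of them for each $\rho < \kappa$). The plan is instead to push all the way to $\kappa$ threads at once. Assume toward a contradiction that $\mathbb{P}$ adds a thread. By the ``moreover'' clause of Lemma~\ref{lem: square+killing stat+thread is kappa closed}, over $V^{\mathbb{S} \ast \mathbb{T}}$ the forcing $\mathbb{P}$ has a $\lambda$-directed closed dense subset, and, reinterpreting the name and using that $\mathbb{T}$ and $\mathbb{P}$ are mutually generic over $V^{\mathbb{S}}$, it still adds a thread there. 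Since a full-support product of $\lambda$-directed closed forcings is $\lambda$-directed closed, the product $\mathbb{P}^\kappa$ is $\lambda$-directed closed over $V^{\mathbb{S} \ast \mathbb{T}}$; in particular it preserves the regularity of $\lambda$. On the other hand, the $\kappa$ threads $\langle D_\xi \mid \xi < \kappa\rangle$ added by the separate coordinates are each new over $V^{\mathbb{S}\ast\mathbb{T}}$ (else such a $D_\xi$ would already thread $\mathcal{C}$ in $V^{\mathbb{S}}$, where it does not) and pairwise distinct (if $D_\xi = D_\zeta$ then this common object lies in $V^{\mathbb{S}\ast\mathbb{T}}[G_\xi] \cap V^{\mathbb{S}\ast\mathbb{T}}[G_\zeta] = V^{\mathbb{S}\ast\mathbb{T}}$ by mutual genericity of the coordinates, a contradiction).

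Now, \emph{assuming $\kappa < \lambda$}, there are only $\kappa < \cf(\lambda)$ pairs, so $\beta^* := \sup_{\xi \neq \zeta} \min(D_\xi \triangle D_\zeta) < \lambda$, and $\bigcap_{\xi < \kappa} \acc(D_\xi)$ is a club. Choosing $\alpha$ in this club with $\alpha > \beta^*$, each $D_\xi \cap \alpha$ lies in $\mathcal{C}_\alpha$ and the map $\xi \mapsto D_\xi \cap \alpha$ is injective, giving an injection of $\kappa$ into $\mathcal{C}_\alpha$ -- impossible, since $|\mathcal{C}_\alpha| < \kappa$. This contradiction shows $\mathbb{P}$ adds no thread, so $\mathcal{C}$ remains a $\square(\lambda, <\kappa)$ sequence and (3) holds.

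The step I expect to be the main obstacle is precisely the boundary case $\kappa = \lambda$. The counting argument above breaks down there: $\bigcap_{\xi<\lambda}\acc(D_\xi)$ need not be club, $\beta^*$ may equal $\lambda$, and, more fundamentally, a coherent sequence of width $<\lambda$ is a tree of height $\lambda$ with levels of size $<\lambda$, which may consistently carry $\lambda$ (or more) cofinal branches; hence $\lambda$-many distinct threads need \emph{not} contradict the regularity of $\lambda$ at all, and multiplying copies of $\mathbb{P}$ cannot rule out a thread. For $\kappa = \lambda$ a different argument is needed; the natural candidate is to exploit that, under $\lambda^{<\lambda}=\lambda$, $\mathbb{S}(\lambda,<\lambda)\ast\dot{\mathbb{T}}$ can be arranged to be $\sigma$-closed, so that $\mathbb{P}$ is proper over $V^{\mathbb{S}\ast\mathbb{T}}$, and then to argue directly that such a $\mathbb{P}$ cannot introduce a fresh thread. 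A secondary point to pin down is the directed-closure bookkeeping: the reduction of (2) to Lemma~\ref{lem: square+killing stat+thread is kappa closed} requires $\mathbb{S}\ast\dot{\mathbb{T}}^\rho$ to be $\lambda$-directed closed, so one should confirm that the preceding lemma delivers $\lambda$-directed (and not merely $\kappa$-directed) closure.
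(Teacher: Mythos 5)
Your reconstruction follows the route the paper intends: the paper gives no proof of this corollary beyond the remark that the width-one discussion ``translates with minimal changes,'' and your plan --- take $\mathbb{Q} = \mathbb{S}(\lambda,<\kappa)\ast\dot{\mathbb{P}}$, derive (1), (2), (4) from Lemma \ref{lem: square+killing stat+thread is kappa closed} applied with $\mathbb{T}^\rho$ in place of $\mathbb{T}$, and prove (3) by producing many mutually generic threads --- is exactly the natural translation; indeed the product device for (3) is what the paper later writes out explicitly (e.g.\ $\mathbb{P}^3$ in the $\square(\lambda,2)$ theorem of Section \ref{sec:consistency results}). For $\kappa<\lambda$ your argument is correct: $\mathbb{P}^\kappa$ is $\lambda$-distributive after composing with $\mathbb{T}$, so $\lambda$ stays regular; the $\kappa$ threads are pairwise distinct, since by mutual genericity a repeated thread would land in $V^{\mathbb{S}}$, where $\mathcal{C}$ has no thread (this direct route is cleaner than your detour through $V^{\mathbb{S}\ast\dot{\mathbb{T}}}$, though your version also goes through because $\mathbb{T}$ and $\mathbb{P}^\kappa$ are mutually generic); and a common accumulation point above the splitting bound yields $\kappa$ distinct members of a single $\mathcal{C}_\alpha$, a contradiction. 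You are also right that the lemma preceding the corollary must deliver $\lambda$-directed (not merely $\kappa$-directed) closure of $\mathbb{S}(\lambda,<\kappa)\ast\dot{\mathbb{T}}^\rho$; the ``$\kappa$-directed'' in its statement is evidently a typo, as nothing less than $\lambda$-directed closure would serve clause (2), and the width-one construction gives it.

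The one genuine gap is the boundary case $\kappa=\lambda$, which you flag yourself and which the paper does not address either. The obstruction is real: a coherent sequence of width $<\lambda$ is a tree of height $\lambda$ with levels of size $<\lambda$, which may carry more than $\lambda$ cofinal branches, so no number of mutually generic copies of $\mathbb{P}$ turns ``many distinct threads'' into a contradiction with the regularity of $\lambda$. Your proposed repair via $\sigma$-closure and properness is only a sketch, and it runs against the paper's own remark that the threading forcing of a $\square(\lambda,<\lambda)$ sequence can itself be $\sigma$-closed or $\lambda$-c.c.\ --- that is, proper forcing \emph{can} add threads to sequences of width $<\lambda$, so properness alone cannot rule out a thread here. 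For $\kappa=\lambda$ an additional idea is needed (or the corollary should be read with $\kappa<\lambda$, which is all that is used in the rest of the paper). Subject to that caveat, your proof is correct and coincides with the intended one.
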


\subsection{Adding an indexed square sequence}\label{subsec: indexed square}

We now introduce an indexed strengthening of $\square(\lambda, \kappa)$.

\begin{defn}
Let $\kappa < \lambda$ be infinite regular cardinals. $\mathcal{C} = \langle C_{\alpha, i} \mid \alpha < \lambda, i(\alpha) \leq i < \kappa \rangle$ is a $\square^{\mathrm{ind}}(\lambda, \kappa)$-sequence if the following hold.
\begin{enumerate}
\item{For all $\alpha < \lambda$, $i(\alpha) < \kappa$.}
\item{For all limit $\alpha < \lambda$ and $i(\alpha) \leq i < \kappa$, $C_{\alpha, i}$ is club in $\alpha$.}
\item{For all limit $\alpha < \lambda$ and $i(\alpha) \leq i < j < \kappa$, $C_{\alpha, i} \subseteq C_{\alpha, j}$.}
\item{For all limit $\alpha < \beta < \lambda$ and $i(\beta) \leq i < \kappa$, if $\alpha \in \acc(C_{\beta, i})$, then $i(\alpha) \leq i$ and $C_{\beta, i} \cap \alpha = C_{\alpha, i}$.}
\item{For all limit $\alpha < \beta < \lambda$, there is $i(\beta) \leq i < \kappa$ such that $\alpha \in \acc(C_{\beta, i})$.}
\item{There is no club $D \subseteq \lambda$ such that, for all $\alpha \in \acc(D)$, there is $i(\alpha) \leq i < \kappa$ such that $D \cap \alpha = C_{\alpha, i}$.}
\end{enumerate}
$\square^{\mathrm{ind}}(\lambda, \kappa)$ holds if there is a $\square^{\mathrm{ind}}(\lambda, \kappa)$-sequence.
\end{defn}

\begin{rem}
$\square^{\mathrm{ind}}(\lambda, \kappa)$ is a generalization of the indexed square notion $\square^{\mathrm{ind}}_{\mu, \cf(\mu)}$ studied in \cite{Magidor-Cummings-Foreman-Squares} and \cite{cummings-schimmerling}. In the forcing constructions used in those papers to add $\square^{\mathrm{ind}}_{\mu, \kappa}$-sequences, it is important that $\mu$ is singular and $\kappa = \cf(\mu)$. Removing the order-type restriction and moving to $\square^{\mathrm{ind}}(\lambda, \kappa)$ gives us much more freedom with regards to the width and length of our indexed square sequences, and this freedom will be exploited in consistency results later in the paper.
\end{rem}

It is clear that a $\square^{\mathrm{ind}}(\lambda, \kappa)$-sequence is a full $\square(\lambda, \kappa)$-sequence and so, by Theorem \ref{fullReflectionThm}, 
$\square^{\mathrm{ind}}(\lambda, \kappa)$ implies the failure of $\Refl(\kappa, S)$ for every stationary $S \subseteq \lambda$. We will see in Section \ref{sec:consistency results} 
that this is sharp. The following is easily seen. A proof can be found in \cite{Chris-narrow_systems}.

\begin{lem}
Let $\kappa < \lambda$ be regular cardinals. The above definition of a $\square^{\mathrm{ind}}(\lambda, \kappa)$-sequence is unchanged if item (6) is replaced by the following seemingly weaker condition:
\[
\mbox{There is no club } D \subseteq \lambda \mbox{ and } i < \kappa \mbox{ such that, } \forall\alpha \in \acc(D), D \cap \alpha = C_{\alpha,i}.
\]
\end{lem}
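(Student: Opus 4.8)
The plan is to show that the two formulations of item (6) are equivalent by proving each implies the other. One direction is trivial: the displayed condition is a special case of item (6), obtained by requiring that a single fixed index $i$ work at every accumulation point of $D$, so item (6) as originally stated immediately implies the displayed weaker version. The substance of the lemma is the converse, namely that if there is a club $D$ witnessing the failure of the original item (6)---so that for every $\alpha \in \acc(D)$ there is \emph{some} $i(\alpha) \leq i_\alpha < \kappa$ with $D \cap \alpha = C_{\alpha, i_\alpha}$---then one can extract from it a club $D'$ and a \emph{single} index $i^*$ witnessing the failure of the displayed condition.

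First I would observe that such a $D$ is precisely a thread through the underlying coherent sequence in the sense adapted to indexed squares. The natural move is to consider the function $\alpha \mapsto i_\alpha$ on $\acc(D)$ and try to stabilize it. However, this function need not be regressive in any useful way and $\kappa$-many values are possible, so a direct Fodor argument is not available; instead I would exploit the coherence and the $\subseteq$-monotonicity in the index given by item (3). The key structural fact is that if $\alpha < \beta$ are both in $\acc(D)$ with $D \cap \alpha = C_{\alpha, i_\alpha}$ and $D \cap \beta = C_{\beta, i_\beta}$, then since $\alpha \in \acc(D \cap \beta) = \acc(C_{\beta, i_\beta})$, item (4) gives $C_{\beta, i_\beta} \cap \alpha = C_{\alpha, i_\beta}$ and $i(\alpha) \leq i_\beta$; but also $C_{\beta, i_\beta} \cap \alpha = D \cap \alpha = C_{\alpha, i_\alpha}$. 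Thus $C_{\alpha, i_\alpha} = C_{\alpha, i_\beta}$, and since the $C_{\alpha, j}$ are $\subseteq$-increasing in $j$ and club in $\alpha$, equal clubs at distinct indices are possible but we gain that all indices between $\min(i_\alpha, i_\beta)$ and $\max(i_\alpha, i_\beta)$ yield the same club at $\alpha$. This coherence lets me propagate a single index upward along $D$.

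The plan for extracting the uniform index is then as follows. I would set $i^* = \liminf_{\alpha \in \acc(D)} i_\alpha$, or more carefully, pick an index realized cofinally in $D$ in a coherent way: using the monotonicity just established, I would argue that there is a final segment of $\acc(D)$ along which $i_\alpha$ is, in the relevant sense, constant, or at least that a fixed $i^*$ satisfies $D \cap \alpha = C_{\alpha, i^*}$ for all $\alpha$ in a club. Concretely, I would fix any $\alpha_0 \in \acc(D)$ large enough and set $i^* = i_{\alpha_0}$; then for every larger $\beta \in \acc(D)$, the coherence computation above forces $D \cap \beta \cap \alpha_0 = C_{\beta, i_\beta} \cap \alpha_0 = C_{\alpha_0, i^*}$, and combined with item (5) and the uniqueness of clubs in the indexed sequence below a given point, one propagates $i^*$ as a valid witnessing index on a club $D' \subseteq D$. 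I would then let $D'$ be the club of accumulation points of $D$ above $\alpha_0$ at which $i^* $ witnesses $D' \cap \alpha = C_{\alpha, i^*}$.

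The main obstacle I anticipate is the stabilization step: ensuring that a single index $i^*$ works \emph{simultaneously} for all points of a club, rather than just locally. The danger is that the witnessing indices $i_\alpha$ drift upward without bound as $\alpha$ increases, so no single index survives cofinally. Overcoming this requires leveraging the indexed-square axioms (3) and (4) to show that, once an index $i^*$ witnesses $D \cap \alpha = C_{\alpha, i^*}$ at one point $\alpha_0$, coherence \emph{forces} the same index (or an index yielding the same club, which is all we need) to witness the relation at every point above $\alpha_0$ where $\alpha_0$ is an accumulation point of the thread. I expect the proof that this propagation cannot fail---that the witnessing index, having been pinned down once, cannot be forced strictly higher by the coherence requirements---to be the crux, and it should follow from carefully applying item (4) at pairs $\alpha_0 < \beta$ in $\acc(D)$ together with the containment $C_{\beta, i} \cap \alpha_0 = C_{\alpha_0, i}$ for the relevant indices.
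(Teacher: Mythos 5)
Your overall architecture is right---one direction is trivial, and for the converse you must extract a single index from a club $D$ satisfying $D \cap \alpha = C_{\alpha, i_\alpha}$ for each $\alpha \in \acc(D)$---and your coherence computation (for $\alpha < \beta$ in $\acc(D)$, item (4) gives $D \cap \alpha = C_{\beta, i_\beta} \cap \alpha = C_{\alpha, i_\beta}$) is exactly the right tool. But the stabilization step, which you yourself identify as the crux, is not carried out, and the mechanism you propose for it does not work.

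The problem is that coherence propagates an index \emph{downward}, never upward. Fixing $\alpha_0 \in \acc(D)$ and $i^* = i_{\alpha_0}$, your computation shows only that $C_{\alpha_0, i_\beta} = C_{\alpha_0, i^*}$ for $\beta > \alpha_0$; it does not show $D \cap \beta = C_{\beta, i^*}$, and indeed $C_{\beta, i^*}$ need not even be defined, since nothing in items (1)--(5) prevents $i(\beta) > i^*$. So the witnessing indices genuinely can drift upward as $\beta$ increases, no matter how ``large'' you take $\alpha_0$, and no appeal to items (3)--(5) or to ``uniqueness of clubs below a given point'' (which is false: distinct clubs $C_{\beta, i}$ coexist at a single $\beta$) will pin the index down from a single anchor point. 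What does pin it down is the hypothesis that $\kappa < \lambda$ and $\lambda$ is regular, which you never invoke: the map $\alpha \mapsto i_\alpha$ sends $\acc(D)$, a set of size $\lambda$, into $\kappa < \lambda$, so by plain pigeonhole some $i^*$ satisfies $D \cap \beta = C_{\beta, i^*}$ for an unbounded set of $\beta$. (Your remark that a direct Fodor argument is unavailable because ``$\kappa$-many values are possible'' misses the point that $\kappa$-many values spread over $\lambda$-many points is exactly what makes counting work; no Fodor is needed.) Once $i^*$ is realized unboundedly, your downward coherence computation finishes the proof: for any $\alpha \in \acc(D)$ choose $\beta > \alpha$ with $i_\beta = i^*$; then $\alpha \in \acc(C_{\beta, i^*})$, so item (4) yields $i(\alpha) \leq i^*$ and $D \cap \alpha = C_{\beta, i^*} \cap \alpha = C_{\alpha, i^*}$. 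Note also that the original club $D$ itself then witnesses the failure of the weak condition at every point of $\acc(D)$; your final move of shrinking to a final segment $D' \subseteq D$ is both unnecessary and broken, since then $D' \cap \alpha \neq D \cap \alpha$ and the initial-segment relation you need is destroyed.
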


We now define a forcing poset designed to add an indexed square sequence.

\begin{defn}
Suppose $\kappa < \lambda$ are infinite regular cardinals. Let $\mathbb{S}^{\mathrm{ind}}(\lambda, \kappa)$ be a forcing poset whose conditions are all $p = \langle C^p_{\alpha, i} \mid \alpha \leq \gamma^p, i(\alpha)^p \leq i < \kappa \rangle$ satisfying the following conditions.
\begin{enumerate}
\item{$\gamma^p < \lambda$ is a limit ordinal and, for all $\alpha \leq \gamma^p, i(\alpha)^p < \kappa$.}
\item{For all limit $\alpha \leq \gamma^p$ and all $i(\alpha)^p \leq i < \kappa$, $C^p_{\alpha, i}$ is a club in $\alpha$.}
\item{For all limit $\alpha \leq \gamma^p$ and all $i(\alpha)^p \leq i < j < \kappa$, $C^p_{\alpha, i} \subseteq C^p_{\alpha, j}$.}
\item{For all limit $\alpha < \beta \leq \gamma^p$ and all $i(\beta)^p \leq i < \kappa$, if $\alpha \in \acc(C^p_{\beta, i})$, then $i(\alpha)^p \leq i$ and $C^p_{\beta, i} \cap \alpha = C^p_{\alpha, i}$.}
\item{For all limit $\alpha < \beta \leq \gamma^p$, there is $i(\beta)^p \leq i < \kappa$ such that $\alpha \in \acc(C^p_{\beta, i})$.}
\end{enumerate}
If $p,q \in \mathbb{S}^{\mathrm{ind}}(\lambda, \kappa)$, then $q \leq p$ iff $q$ end-extends $p$.
\end{defn}

The following is proven in \cite{Chris-narrow_systems}.

\begin{lem}
Let $\kappa < \lambda$ be regular cardinals, and let $\mathbb{S} = \mathbb{S}^{\mathrm{ind}}(\lambda, \kappa)$.
\begin{enumerate}
\item{$\mathbb{S}$ is $\kappa$-directed closed.}
\item{$\mathbb{S}$ is $\lambda$-strategically closed.}
\item{If $G$ is $\mathbb{S}$-generic over $V$ and $\mathcal{C} = \bigcup G$, then $\mathcal{C}$ is a $\square^{\mathrm{ind}}(\lambda, \kappa)$-sequence in $V[G]$.}
\end{enumerate}
\end{lem}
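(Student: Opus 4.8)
The plan is to treat the three clauses in order, doing the real work in clause (1) and in clause (6) of (3); throughout I write a condition as $p=\langle C^p_{\alpha,i}\mid \alpha\le\gamma^p,\ i(\alpha)^p\le i<\kappa\rangle$ and recall that, since the order is end-extension, any two conditions with a common lower bound are comparable.

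For $\kappa$-directed closure, note first that a directed subset of $\mathbb{S}$ of size $<\kappa$ is, by the preceding remark, a chain; replacing it by a cofinal subchain I may assume it is a strictly decreasing sequence $\langle p_\xi\mid\xi<\rho\rangle$ with $\rho=\cf(\rho)<\kappa$. If the tops $\gamma^{p_\xi}$ stabilize there is nothing to do, so let $\gamma^\ast=\sup_\xi\gamma^{p_\xi}$, a limit ordinal below $\lambda$ with $\theta:=\cf(\gamma^\ast)\le\rho<\kappa$. The union of the $p_\xi$ gives a coherent structure on $[0,\gamma^\ast)$, and the whole task is to adjoin a top level at $\gamma^\ast$ obeying (2)--(5). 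Fix a continuous increasing cofinal sequence $\langle\eta_\nu\mid\nu<\theta\rangle$ of limit points of this structure. The crucial device is an index-uniformization: by clause (5) of the structure, for $\nu<\nu'$ there is a least $m(\nu,\nu')$ with $\eta_\nu\in\acc(C_{\eta_{\nu'},m(\nu,\nu')})$, and since there are only $\theta<\kappa$ pairs and $\kappa$ is regular, $i^{\ast}:=\sup\{m(\nu,\nu')\mid\nu<\nu'<\theta\}<\kappa$. Monotonicity of $\acc$ together with the nesting (3) then gives $\eta_\nu\in\acc(C_{\eta_{\nu'},i})$ for all $i\ge i^{\ast}$ and $\nu<\nu'$, whence by coherence the clubs $\langle C_{\eta_\nu,i}\mid\nu<\theta\rangle$ form an $\subseteq$-increasing chain for each fixed $i\ge i^\ast$. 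I would then set $i(\gamma^\ast)=i^\ast$ and $C^\ast_{\gamma^\ast,i}=\bigcup_{\nu<\theta}C_{\eta_\nu,i}$ for $i^\ast\le i<\kappa$. Clauses (2)--(4) are routine; for (5), each $\eta_\nu$ lies in $\acc(C^\ast_{\gamma^\ast,i^\ast})$, and a short argument using nesting and $\acc$-monotonicity shows that every limit $\beta<\eta_\nu$ --- caught at $\eta_\nu$ by some index in the structure --- is caught by the appropriate $C^\ast_{\gamma^\ast,i}$; cofinality of the $\eta_\nu$ finishes the job. The \textbf{main obstacle} is precisely this uniformization of the catching index, which succeeds only because $\cf(\gamma^\ast)<\kappa$; its failure for longer chains is exactly why one can only hope for strategic closure at $\lambda$.

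For $\lambda$-strategic closure I would exhibit a winning strategy for the good player in the length-$\lambda$ game in which she supplies lower bounds at limit stages. At a limit stage of cofinality $<\kappa$ she simply invokes the construction of clause (1). The difficulty is limit stages $\delta$ with $\cf(\delta)\in[\kappa,\lambda)$, where the uniformizing index $i^\ast$ above may be cofinal in $\kappa$. To circumvent this, her strategy maintains throughout a single fixed index $i_0$ and a continuous increasing sequence of her own previous tops, arranged so that each earlier top is an accumulation point of the index-$i_0$ club of each later one (she enforces this whenever she adjoins a top level, keeping the index functions of her tops bounded by $i_0$). Then at any limit stage, irrespective of its cofinality, the index-$i_0$ clubs along her tops cohere and thread to a base club, and the remaining clubs are built as in (1) with $i_0$ in place of $i^\ast$; one checks the resulting move is legal and that the strategy never gets stuck. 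This does not conflict with non-threadability below, since the would-be thread assembled along her moves is completed only at game-length exactly $\lambda$, a stage at which she is not asked to move.

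For clause (3), that $\mathcal{C}=\bigcup G$ has domain $\lambda$ follows because the sets $\{p\mid\gamma^p\ge\gamma\}$ are dense for each $\gamma<\lambda$ (a condition can always be extended upward, passing limits via the closure established above) and $\lambda$-strategic closure keeps $\lambda$ regular; clauses (1)--(5) for $\mathcal{C}$ are inherited verbatim, each being a property of a bounded initial segment true in every condition. The substance is clause (6); by the reduction lemma stated earlier it is enough to rule out a club $D$ and a single $i<\kappa$ with $D\cap\alpha=C_{\alpha,i}$ for all $\alpha\in\acc(D)$. I would argue by genericity: given $p$ and a name $\dot D$ forced to be such a thread, use $\lambda$-strategic closure to build below $p$ a descending sequence meeting dense sets that decide ever-longer initial segments of $\dot D$, with tops converging to some $\delta^\ast$; the good player then closes off at $\delta^\ast$, but chooses the cofinal sequence defining the new top level so that some accumulation point of her clubs is not an accumulation point of the decided set $d=\dot D\cap\delta^\ast$. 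Since $d$ is a single club, this is possible, and it forces $d\ne C_{\delta^\ast,i}$ for every $i$, so $\dot D$ is not a thread --- a contradiction. The \textbf{hardest step} overall is this diagonalization: verifying that the good player can always complete a \emph{legal} top level (still satisfying (2)--(5)) that avoids the prescribed club $d$, all while respecting the coherence already committed to by the lower conditions.
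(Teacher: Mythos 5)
First, a point of comparison: the paper does not prove this lemma at all --- it is quoted from \cite{Chris-narrow_systems} with the remark ``The following is proven in \ldots'' --- so there is no in-paper argument to measure you against. Judged on its own terms, your treatment of clauses (1) and (2) is sound and is the standard one: the poset is tree-like, so directed sets of size $<\kappa$ are chains; the uniformization $i^{\ast}=\sup\{m(\nu,\nu')\}<\kappa$ (which works precisely because $\cf(\gamma^{\ast})<\kappa$ and $\kappa$ is regular) lets you take unions level by level; and for strategic closure the good player's device of fixing a single index $i_0$ once and for all and keeping her own tops linked at that index correctly sidesteps the failure of uniformization at limits of cofinality $\geq\kappa$.

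The genuine gap is in your proof of clause (6) of the definition in part (3). You close off at $\delta^{\ast}$ and want to arrange that some accumulation point of the new top-level clubs is not an accumulation point of the decided club $d=\dot D\cap\delta^{\ast}$, asserting that ``since $d$ is a single club, this is possible.'' It is not, in general. Any accumulation point $\eta$ of a legal club at $\delta^{\ast}$ must be a limit ordinal at which coherence forces $C_{\delta^{\ast},j}\cap\eta=C_{\eta,j}$ with $i(\eta)\leq j$; so your only freedom is in \emph{which} limit ordinals become accumulation points, not in escaping $\acc(d)$. If $d$ contains a tail of $\delta^{\ast}$ (consider the name $\dot D=\check\lambda$, which a priori some condition might force to be an $i$-thread), then $\acc(d)$ contains every limit ordinal in that tail, every admissible choice lands inside $\acc(d)$, and whether $C_{\delta^{\ast},i}=d$ is then decided by the structure already committed to below $\delta^{\ast}$, which your construction did not control. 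The standard repair is to diagonalize on the \emph{index} rather than on the club: having reduced (via the fixed-index reformulation of clause (6)) to a purported thread with a single index $i$, and having forced ordinals $\beta_n\nearrow\delta^{\ast}$ into $\dot D$ so that the lower bound $p^{\ast}$ forces $\delta^{\ast}\in\acc(\dot D)$, simply declare $i(\delta^{\ast})^{p^{\ast}}=\max(i^{\ast},i+1)$ and discard the clubs of index $\leq i$ from your union construction. This is a legal condition (clause (5) survives by the nesting, since anything caught at index $j$ is caught at every larger index), and it leaves $C_{\delta^{\ast},i}$ undefined, directly contradicting $p^{\ast}\Vdash``\dot D\cap\delta^{\ast}=C_{\delta^{\ast},i}$.'' With that change the rest of your outline goes through.
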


We now introduce a family of forcings designed to thread an indexed square sequence.

\begin{defn}
  Let $\kappa < \lambda$ be regular cardinals, let $\mathcal{C} = \langle C_{\alpha, i} \mid \alpha < \lambda, i(\alpha) \leq i < \kappa \rangle$ be a $\square^{\mathrm{ind}}(\lambda, \kappa)$-sequence, and let $i < \kappa$. $\mathbb{T}_i(\mathcal{C})$ is the forcing poset whose conditions are all $C_{\alpha, i}$ such that $\alpha < \lambda$ is a limit ordinal and $i(\alpha) \leq i$. $\mathbb{T}_i(\mathcal{C})$ is ordered by end-extension. 
\end{defn}

\begin{lem}\label{lem: amalgamation for indexed square}
  Let $\kappa < \lambda$ be regular cardinals, and let $\mathbb{S} = \mathbb{S}^{\mathrm{ind}}(\lambda, \kappa)$. Let $\dot{\mathcal{C}} = \langle \dot{C}_{\alpha, i} \mid \alpha < \check{\lambda}, \dot{i(\alpha)} \leq i < \check{\kappa} \rangle$ be a canonical $\mathbb{S}$-name for the generically-introduced $\square^{\mathrm{ind}}(\lambda, \kappa)$-sequence and, for $i < \kappa$, let $\dot{\mathbb{T}}_i$ be an $\mathbb{S}$-name for $\mathbb{T}_i(\dot{\mathcal{C}})$.
\begin{enumerate}
\item{For all $i < \kappa$, $\mathbb{S} \ast \dot{\mathbb{T}}_i$ has a dense $\lambda$-directed closed subset.}
\item{Let $i < j < \kappa$ and, in $V^{\mathbb{S}}$, define $\pi_{i,j} : \mathbb{T}_i \rightarrow \mathbb{T}_j$ by letting, for all $C_{\alpha, i} \in \mathbb{T}_i$, $\pi_{i,j}(C_{\alpha, i}) = C_{\alpha, j}$. Then $\pi_{i,j}$ is a projection.}
\end{enumerate} 
\end{lem}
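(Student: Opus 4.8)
The plan is to prove (1) by exhibiting a dense, $\lambda$-directed closed subset of $\mathbb{S} \ast \dot{\mathbb{T}}_i$, and to prove (2) by a density argument over $\mathbb{S}$ showing that the lifting property of a projection is forced. Throughout I write $\gamma^p$ for the top of a condition $p \in \mathbb{S}$, and I use repeatedly that $\acc$ is monotone, i.e.\ $A \subseteq B$ implies $\acc(A) \subseteq \acc(B)$, which is what makes conditions (3) and (4) in the definition of $\mathbb{S}^{\mathrm{ind}}(\lambda,\kappa)$ interact well.

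For (1), let $\mathbb{U}_i$ be the set of pairs $(p, \check{C}^p_{\gamma^p, i})$ with $p \in \mathbb{S}$ and $i(\gamma^p)^p \le i$; I claim $\mathbb{U}_i$ is dense in $\mathbb{S}\ast\dot{\mathbb{T}}_i$ and $\lambda$-directed closed. Density: given $(s,\dot t)$, extend $s$ to decide $\dot t = \check{C}_{\beta, i}$ and then append a single fresh top $\beta'$ whose level-$i$ club end-extends $C_{\beta,i}$ (a special case of the top-extension construction used in (2) below); the result lies in $\mathbb{U}_i$ below $(s,\dot t)$. For directed closure, let $\{(p_\eta, \check{C}^{p_\eta}_{\gamma_\eta, i}) \mid \eta < \theta\}$ be directed with $\theta < \lambda$; passing to a cofinal increasing chain, directedness forces the $p_\eta$ to cohere, and since end-extension in $\mathbb{S}$ fixes all data strictly below the top, the only work is to define a top level at $\delta = \sup_\eta \gamma_\eta$. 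I set $C^r_{\delta, i'} = \bigcup_\eta C^{p_\eta}_{\gamma_\eta, i'}$ for each $i' \ge i$ and put $i(\delta)^r = i$. The point is that coherence of these unions across $\eta$ holds at every index: from $\gamma_\eta \in \acc(C^{p_{\eta'}}_{\gamma_{\eta'}, i})$ and $C^{p_{\eta'}}_{\gamma_{\eta'}, i} \subseteq C^{p_{\eta'}}_{\gamma_{\eta'}, i'}$ we get $\gamma_\eta \in \acc(C^{p_{\eta'}}_{\gamma_{\eta'}, i'})$, whence condition (4) yields $C^{p_{\eta'}}_{\gamma_{\eta'}, i'} \cap \gamma_\eta = C^{p_\eta}_{\gamma_\eta, i'}$. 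A routine check then verifies (2)--(5) for the new top (condition (5) for limits below $\delta$ is inherited from the $p_\eta$ via monotonicity of $\acc$, pushing witnessing indices up to $\ge i$), so $r \in \mathbb{S}$ and $(r, \check{C}^r_{\delta, i})$ is the desired lower bound in $\mathbb{U}_i$.

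For (2), order-preservation and well-definedness are immediate: if $C_{\alpha, i} \in \mathbb{T}_i$ then $i(\alpha) \le i < j$, so $C_{\alpha, j}$ is defined, and if $C_{\beta, i}$ end-extends $C_{\alpha, i}$ then $\alpha \in \acc(C_{\beta, i}) \subseteq \acc(C_{\beta, j})$, so $C_{\beta, j}$ end-extends $C_{\alpha, j}$; the convention that each $\mathbb{T}_\bullet$ carries a formal maximum sent to the maximum handles the top. The substance is the lifting property, which I prove forced over $\mathbb{S}$. Fix $s \in \mathbb{S}$ and names $\dot p, \dot q$ with $s \Vdash \dot q \le_{\mathbb{T}_j} \pi_{i,j}(\dot p)$; extend $s$ to decide $\dot p = \check{C}_{\alpha, i}$ and $\dot q = \check{C}_{\beta, j}$ with $\alpha \le \beta \le \gamma := \gamma^s$, so that $\alpha \in \acc(C^s_{\beta, j})$ and hence $\alpha \in C^s_{\beta, j}$. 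Using condition (5) of $s$ for the pairs $(\alpha, \gamma)$ and $(\beta, \gamma)$, choose $i^* < \kappa$ with $i^* > j$ and $\alpha, \beta \in \acc(C^s_{\gamma, i^*})$ (pushing the witnessing indices up by monotonicity of $\acc$). Now append a single fresh top $\beta' = \gamma + \omega$ with $i(\beta')^{s''} = i$, threading through $\alpha$ at indices $i \le i' < j$, through $\beta$ at $j \le i' < i^*$, and through $\gamma$ at $i^* \le i' < \kappa$; concretely $C^{s''}_{\beta', i'} = C^s_{\xi, i'} \cup \{\xi\} \cup \{\gamma + n \mid 1 \le n < \omega\}$ for the appropriate $\xi \in \{\alpha, \beta, \gamma\}$ (degenerate coincidences among $\alpha,\beta,\gamma$ only merge tiers and simplify matters).

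The crux is checking that $s''$ is a legitimate condition of $\mathbb{S}$ extending $s$, and this is exactly where the three-tier design and the choice of $i^*$ are needed. Nesting (condition (3)) across tiers reduces to the inclusions $C^s_{\alpha, i'} \subseteq C^s_{\beta, i''}$ and $C^s_{\beta, i'} \subseteq C^s_{\gamma, i''}$ together with $\alpha, \beta \in C^s_{\gamma, i''}$ for $i'' \ge i^*$, all of which follow from $\alpha \in \acc(C^s_{\beta,j})$ and $\alpha, \beta \in \acc(C^s_{\gamma, i^*})$ via monotonicity of $\acc$; coherence (condition (4)) at the new top holds because within each tier the club restricts correctly by condition (4) of $s$; and condition (5) at $\beta'$ holds for every limit below $\gamma$ through the top tier, since there $C^{s''}_{\beta', i'}\cap\gamma = C^s_{\gamma, i'}$ and condition (5) of $s$ at $\gamma$ applies. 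With $s''$ in hand, $\check{C}_{\beta', i}$ is forced to lie below $\dot p$ in $\mathbb{T}_i$ and to have $\pi_{i,j}$-image below $\dot q$ in $\mathbb{T}_j$, so the set of conditions forcing the existence of a lift is dense, and hence $\pi_{i,j}$ is a projection. I expect the main obstacle to be precisely this top-level amalgamation in (2): one must accumulate at $\alpha$ and $\beta$ at the low indices while still accumulating at $\gamma$ (as condition (5) demands) at a high index, and reconciling this with the nesting requirement is what forces the tiered construction and the repeated appeal to monotonicity of $\acc$.
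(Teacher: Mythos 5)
Your proof is correct and takes essentially the same route as the paper: the same dense, tree-like set $\mathbb{U}_i$ of conditions deciding the thread to be the index-$i$ club at the top level (with density via a fresh top at $\gamma^p+\omega$ threading through the decided value at low indices and through $\gamma^p$ at high indices, and closure via unions), and for (2) the same reduction to producing a single condition whose top-level clubs accumulate at $\gamma_0$ at index $i$ and at $\gamma_1$ at index $j$. The only difference is that you carry out in detail the tiered top-extension that the paper dispatches as ``an easy density argument.''
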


\begin{proof}
  We first establish (1). The proof is standard but included for completeness. Fix $i < \kappa$, and let $\mathbb{U}_i$ be the set of $(p, \dot{t}) \in \mathbb{S} \ast \dot{\mathbb{T}}_i$ such that $p \Vdash ``\dot{i}(\gamma^p) \leq i$ and $\dot{t} = \dot{C}_{\gamma^p, i}"$.
  We first show that $\mathbb{U}_i$ is dense. To this end, fix $(p_0, \dot{t}_0) \in \mathbb{S} \ast \dot{\mathbb{T}}_i$. By strengthening $p_0$ if necessary, we may assume that there is $\alpha < \lambda$ such that $p_0 \Vdash ``\dot{t}_0 = \dot{C}_{\alpha, i}"$ and that $\gamma^{p_0} \geq \alpha$. Let $\gamma = \gamma^{p_0} + \omega$. We will define $p \leq p_0$ with $\gamma^p = \gamma$. To do this, we need only specify $i(\gamma)^p$ and $C^p_{\gamma, j}$ for $i(\gamma)^p \leq j < \kappa$. Let $i(\gamma)^p = i$. Let $j^* < \kappa$ be least such that $\alpha \in \acc(C^{p_0}_{\gamma^p, j})$. If $i \leq j < j^*$, let $C^p_{\gamma, j} = C^{p_0}_{\alpha, j} \cup \{\alpha\} \cup \{\gamma^{p_0} + n \mid n < \omega \}$. If $i, j^* \leq j < \kappa$, let $C^p_{\gamma, j} = C^{p_0}_{\gamma^{p_0}, j} \cup \{\gamma^{p_0} + n \mid n < \omega \}$. Let $t = C^p_{\gamma, i}$, and let $\dot{t}$ be an $\mathbb{S}$-name forced to be equal to $t$. Then $(p, \dot{t}) \leq (p_0, \dot{t}_0)$, and $(p, \dot{t}) \in \mathbb{U}_i$.

  We next show that $\mathbb{U}_i$ is $\lambda$-directed closed. Note first that $\mathbb{U}_i$ is tree-like, i.e.\ if $u,v,w \in \mathbb{U}_i$ and $w \leq u, v$, then $u$ and $v$ are comparable. It thus suffices to show that $\mathbb{U}_i$ is $\lambda$-closed. Thus, let $\eta < \lambda$ be a limit ordinal, and let $\langle (p_\xi, \dot{t}_\xi) \mid \xi < \eta \rangle$ be a strictly decreasing sequence of conditions from $\mathbb{U}_i$. Let $\gamma = \sup(\{\gamma^{p_\xi} \mid \xi < \eta \})$. We first define $p \in \mathbb{S}$ so that $\gamma^p = \gamma$. It suffices to define $i(\gamma)^p$ and $C^p_{\gamma, j}$ for $i(\gamma)^p \leq j < \kappa$. Let $i(\gamma)^p = i$ and, for $i(\gamma)^p \leq j < \kappa$, let $C^p_{\gamma, j} = \bigcup_{\xi < \eta} C^{p_\xi}_{\gamma_\xi, j}$. Let $t = C^p_{\gamma, i}$, and let $\dot{t}$ be an $\mathbb{S}$-name forced to be equal to $t$. It is easily verified that $(p, \dot{t})$ is a lower bound for $\langle (p_\xi, \dot{t}_\xi) \mid \xi < \eta \rangle$ and that $(p, \dot{t}) \in \mathbb{U}_i$.

  We finally show (2). Let $G$ be $\mathbb{S}$-generic over $V$, and, in $V[G]$, let $\mathcal{C} = \langle C_{\alpha, i} \mid \alpha < \lambda, i(\alpha) \leq i < \kappa \rangle$ be the generically added $\square^{\mathrm{ind}}(\lambda, \kappa)$-sequence. Fix $i < j < \kappa$. It is clear that $\pi_{i,j}$ is order-preserving. It thus suffices to show that, for all $t_0 \in \mathbb{T}_i$ and $s \leq \pi_{i,j}(t_0)$ in $\mathbb{T}_j$, there is $t \leq t_0$ in $\mathbb{T}_i$ such that $\pi_{i,j}(t) \leq s$. Fix such a $t_0$ and $s$. Let $t_0 = C_{\gamma_0, i}$ and $s = C_{\gamma_1, j}$. By an easy density argument, we can find $\gamma_2 < \lambda$ such that $i(\gamma_2) \leq i$, $\gamma_0 \in \acc(C_{\gamma_2, i})$, and $\gamma_1 \in \acc(C_{\gamma_2, j})$. Let $t = C_{\gamma_2, i}$. Then $t \leq t_0$ and $\pi_{i,j}(t) \leq s$. 
\end{proof}

An argument similar to that in the proof of Lemma \ref{lem: square+killing stat+thread is kappa closed} yields the following Lemma.

\begin{lem} \label{indexed_iteration_lemma}
  Suppose $\kappa < \lambda$ are infinite, regular cardinals. Let $\mathbb{S}$ be $\mathbb{S}^{\mathrm{ind}}(\lambda, \kappa)$, and, in $V^\mathbb{S}$, let $\mathcal{C}$ 
  be the generically-added $\square^{\mathrm{ind}}(\lambda, \kappa)$-sequence. In $V^\mathbb{S}$ let $\mathbb{T} = \bigoplus_{i < \kappa} \mathbb{T}_i$ (the lottery sum of all the threading forcings $\mathbb{T}_i(\mathcal{C})$ for $i < \kappa$). 
  Also in $V^\mathbb{S}$, let $\mathbb{P}$ be an iteration, taken with supports of size $<\lambda$, destroying the stationarity of $\mathbb{T}$-fragile subsets of $\lambda$. 
  Then for every $i < \kappa$, $\mathbb{S} \ast \dot{\mathbb{P}} \ast \dot{\mathbb{T}_i}$ has a dense $\lambda$-directed closed subset and, in $V^{\mathbb{S} \ast \dot{\mathbb{T}}_i}$, $\mathbb{P}$ has a dense $\lambda$-directed closed subset.
\end{lem}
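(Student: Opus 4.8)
The plan is to adapt the argument used to prove Lemma~\ref{lem: square+killing stat+thread is kappa closed}, replacing the single threading forcing $\mathbb{T}$ with the individual threading forcing $\mathbb{T}_i$ for a fixed $i < \kappa$. First I would fix $i < \kappa$ and work in $V^{\mathbb{S}}$, where we have the generic $\square^{\mathrm{ind}}(\lambda,\kappa)$ sequence $\mathcal{C}$. The key observation is that, by Lemma~\ref{lem: amalgamation for indexed square}(1), $\mathbb{S} \ast \dot{\mathbb{T}}_i$ has a dense $\lambda$-directed closed subset $\mathbb{U}_i$; this plays the role that $\mathbb{U}_0$ played in the proof of Lemma~\ref{lem: square+killing stat+thread is kappa closed}. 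Since each step of the iteration $\mathbb{P}$ shoots a club through the complement of a $\mathbb{T}$-fragile set, and since $\mathbb{T}_i$ projects into $\mathbb{T}$ (being one component of the lottery sum $\mathbb{T} = \bigoplus_{j<\kappa}\mathbb{T}_j$), any $\mathbb{T}$-fragile set is in particular made non-stationary after forcing with $\mathbb{T}_i$.

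The next step is to define the dense subset $\mathbb{U}$ of $\mathbb{S} \ast \dot{\mathbb{P}} \ast \dot{\mathbb{T}}_i$ exactly as in the proof of Lemma~\ref{lem: square+killing stat+thread is kappa closed}: a condition $(s,\dot{p},\dot{t})$ belongs to $\mathbb{U}$ provided $(s,\dot{t}) \in \mathbb{U}_i$, the domain $a$ of $\dot{p}$ is decided in $V$, each coordinate $\dot{p}(\alpha)$ is forced to be a fixed closed bounded set $c_\alpha$, and $\max(c_\alpha)$ is forced to lie in the relevant generic club $\dot{C}_\alpha$ that witnesses the $\mathbb{T}$-fragility of $\dot{S}_\alpha$. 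The point is that the $\mathbb{T}$-fragility witnesses can be interpreted as $\mathbb{S}\ast\dot{\mathbb{P}}_\alpha\ast\dot{\mathbb{T}}_i$-names, because forcing with $\mathbb{T}_i$ suffices to make each $\dot{S}_\alpha$ non-stationary. One then verifies, as in \cite{Chris-Reflection_2}, that $\mathbb{U}$ is dense and $\lambda$-directed closed; the density uses that the generic club can be threaded through the bounded approximations, and the directed closure uses the $\lambda$-directed closure of $\mathbb{U}_i$ together with the small supports of $\mathbb{P}$.

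The main obstacle, and the reason this requires a genuine (if routine) argument rather than a direct citation, is ensuring that the single threading forcing $\mathbb{T}_i$ is strong enough to witness the non-stationarity of every $\mathbb{T}$-fragile set used in the iteration. Here $\mathbb{T}$ is the lottery sum of all the $\mathbb{T}_j$, so a set is $\mathbb{T}$-fragile precisely when it is forced to be non-stationary by \emph{every} component $\mathbb{T}_j$; in particular it is made non-stationary by $\mathbb{T}_i$, which is exactly what is needed to interpret $\dot{C}_\alpha$ as a $\mathbb{T}_i$-name. Once this compatibility is secured, the verification of the two bulleted properties of $\mathbb{U}$ proceeds exactly as in Lemma~\ref{lem: square+killing stat+thread is kappa closed}. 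The final claim, that $\mathbb{P}$ has a dense $\lambda$-directed closed subset in $V^{\mathbb{S}\ast\dot{\mathbb{T}}_i}$, then follows immediately by projecting $\mathbb{U}$ onto its $\mathbb{P}$-coordinate, just as the analogous statement followed in the earlier lemma.
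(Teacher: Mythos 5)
Your proposal is correct and follows essentially the same route as the paper, which itself only remarks that ``an argument similar to that in the proof of Lemma~\ref{lem: square+killing stat+thread is kappa closed} yields the Lemma.'' You correctly identify the two points that make the transfer work: Lemma~\ref{lem: amalgamation for indexed square}(1) supplies the dense $\lambda$-directed closed subset $\mathbb{U}_i$ of $\mathbb{S}\ast\dot{\mathbb{T}}_i$ in place of $\mathbb{U}_0$, and fragility with respect to the lottery sum $\mathbb{T}=\bigoplus_{j<\kappa}\mathbb{T}_j$ entails fragility with respect to each component $\mathbb{T}_i$, so the club names $\dot{C}_\alpha$ may be read as $\mathbb{S}\ast\dot{\mathbb{P}}_\alpha\ast\dot{\mathbb{T}}_i$-names.
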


\subsection{Indestructible Stationary Reflection}\label{subsec: indestructible stationary reflection}
In this subsection we gather a few theorems of similar flavour which are independent from the other parts of this paper. 

These theorems show that, given large cardinals, one can force simultaneous stationary reflection at many cardinals. The large cardinals that are required depend on the nature of the cardinal at which we force the stationary reflection. These results will be used in Section~\ref{sec:consistency results}, in which our forcing arguments will require that simultaneous stationary reflection is indestructible under sufficiently closed forcing. For notational ease, we thus make the following definition.

\begin{defn}
  Suppose $\lambda$ is a regular, uncountable cardinal, $\kappa \leq \lambda$, and $S \subseteq \lambda$ is stationary. Then $\Refl^*(< \kappa, S)$ is the statement that, whenever $\mathbb{P}$ is a $\lambda$-directed closed forcing poset and $|\mathbb{P}| \leq \lambda$, then $\Vdash_{\mathbb{P}} ``\Refl(< \kappa, \check{S})"$. $\Refl^*(\kappa, S)$ is given the obvious meaning.
\end{defn}

\begin{rem}
Since the trivial forcing is $\lambda$-directed closed, $\Refl^*(< \kappa, S)$ implies $\Refl( < \kappa, S)$.
\end{rem}

The next theorem is well known, but since we require $\Refl^*(\omega_1, S^{\omega_2}_\omega)$ rather than the more standard $\Refl(\omega_1, S^{\omega_2}_\omega)$, we give a detailed proof.
\begin{thm}\label{thm:reflection at omega_2}
The following are equiconsistent over \ZFC.
\begin{enumerate}
\item{There is a weakly compact cardinal.}
\item{$\Refl^*(\omega_1, S^{\omega_2}_\omega)$.}
\end{enumerate}
\end{thm}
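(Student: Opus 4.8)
The plan is to prove the equiconsistency by establishing the two directions separately, with the forcing direction being the substantive one. For the direction (2) $\Rightarrow$ (1), I would argue that $\Refl(\omega_1, S^{\omega_2}_\omega)$ already implies that $\omega_2$ carries a large-cardinal-style reflection property that, when reconstructed in an inner model, yields a weakly compact cardinal. Concretely, $\Refl^*$ implies $\Refl$, and the standard argument shows that if $\square_{\omega_1}$ held then $\Refl(S)$ would fail for stationary $S \subseteq \omega_2$ by Theorem \ref{thm:non refl square_kappa}; hence $\square_{\omega_1}$ fails, which by Jensen's covering-type analysis forces $\omega_2$ to be weakly compact (indeed Mahlo) in $L$, or more generally in the core model. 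I would cite the fact that the failure of $\square_\kappa$ at a successor of a regular cardinal entails that $\kappa^+$ is weakly compact in $K$; simultaneous reflection at $S^{\omega_2}_\omega$ gives exactly the amount of failure needed.

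For the main direction (1) $\Rightarrow$ (2), I would start with a weakly compact cardinal $\kappa$ and force to make $\kappa = \omega_2$ while arranging indestructible simultaneous reflection. The natural approach is a Levy collapse $\Col(\omega_1, {<}\kappa)$, which makes $\kappa = \omega_2$ and is $\omega_1$-closed, hence adds no new $\omega$-sequences and preserves $S^{\omega_2}_\omega$-stationarity appropriately. The key point is that weak compactness gives, for every stationary $S \subseteq \kappa$ concentrating on $S^\kappa_\omega$ in the extension, a reflection point: using the tree/embedding characterization of weak compactness, any family of fewer than $\omega_1$-many (i.e.\ finitely or countably many, here countably many) stationary subsets of $S^{\omega_2}_\omega$ can be captured by an elementary embedding $j : M \to N$ with $\crit(j) = \kappa$, and $\kappa$ itself serves as a simultaneous reflection point since $\cf(\kappa) = \kappa > \omega$ and each $S \cap \kappa$ is stationary in the ground model picture. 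I would verify that the collapse factors as a product in a way that lets me reflect stationary sets appearing in the extension back to a point below $\kappa$ of uncountable cofinality.

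To obtain the starred (indestructible) version $\Refl^*(\omega_1, S^{\omega_2}_\omega)$, I would first make the weak compactness of $\kappa$ indestructible under $\kappa$-directed closed forcing, using a Laver-style preparation for weakly compact cardinals before collapsing. After the preparation and the collapse, any further $\lambda$-directed closed forcing $\mathbb{P}$ of size $\leq \lambda = \omega_2$ in the final model lifts to forcing over the weakly compact embedding, so the reflection argument goes through verbatim in $V^{\Col(\omega_1, {<}\kappa) \ast \dot{\mathbb{P}}}$. The crucial structural fact is that $\Col(\omega_1, {<}\kappa) \ast \dot{\mathbb{P}}$ can be absorbed into a single weakly-compact-preserving forcing, so one elementary embedding handles the reflection of any countable family of stationary subsets of $S^{\omega_2}_\omega$ in the extension.

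The hard part will be the indestructibility bookkeeping: ensuring that the weak compactness (specifically, the relevant $\Pi^1_1$-reflection or the existence of suitable embeddings of transitive models of size $\kappa$) survives both the collapse and an arbitrary subsequent $\omega_2$-directed closed forcing of size $\omega_2$. One must check that such a $\mathbb{P}$, living in the collapse extension, can be lifted through the embedding — this requires that $\mathbb{P}$ and its generic are captured by the domain model $M$ and that a master condition can be built, which is where the closure of $\mathbb{P}$ and the Laver preparation interact. I would expect the delicate point to be verifying that the reflected family still consists of stationary sets at the reflection point after lifting, i.e.\ that stationarity of each $S \in \mathcal{S}$ is correctly computed by $N$, which follows from $N$ being closed enough and $\crit(j) = \kappa = \omega_2$ having uncountable cofinality.
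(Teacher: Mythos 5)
Your forcing direction is essentially the paper's construction: the paper also interleaves a lottery-sum (Laver-style) preparation with collapses to $\omega_1$ in a single countable-support iteration of length $\kappa$, so that an arbitrary $\omega_2$-directed closed $\mathbb{P}$ of size $\omega_2$ from the extension can be selected by the stage-$\kappa$ lottery of $j(\mathbb{L})$, after which one builds the tail generic by hand (using $\sigma$-closure, $|M[G][H][C]|=\omega_1$, and $^{\omega}M[G][H][C]\subseteq M[G][H][C]$) and finishes with a master condition for $j(\mathbb{P})$. Two small inaccuracies in your sketch: $\Refl(\omega_1,S)$ means $\Refl(<\omega_2,S)$, so the families to be reflected have size up to $\aleph_1$, not ``countably many''; and at the reflection point the relevant fact is that $\cf(\kappa)=\omega_1>\omega$ in $M[K][J]$ after the stage-$\kappa$ collapse $\Col(\omega_1,\kappa)$, not that $\kappa$ is regular there. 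Neither affects the method.

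The genuine gap is in your direction $(2)\Rightarrow(1)$. The failure of $\square_{\omega_1}$ does \emph{not} entail that $\omega_2$ is weakly compact in $L$ or in $K$; by Solovay's result (quoted in the introduction of this paper), it entails only that $\omega_2$ is Mahlo there, and indeed mere $\Refl(S^{\omega_2}_\omega)$ is equiconsistent with a Mahlo cardinal (Harrington--Shelah), so no argument running solely through $\neg\square_{\omega_1}$ can produce a weakly compact lower bound. To get weak compactness you must use the \emph{simultaneous} reflection of two stationary sets: the correct route is that if $\omega_2$ is not weakly compact in $L$ then $\square(\omega_2)$ holds (Todorcevic), and then Theorem \ref{thm:(Lambie-Hanson)-simul refl} splits any stationary $S\subseteq\omega_2$ into two stationary pieces that do not reflect simultaneously, contradicting $\Refl(2,S^{\omega_2}_\omega)$; this is Magidor's lower bound, which the paper simply cites. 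As written, your lower-bound argument only establishes equiconsistency with a Mahlo cardinal.
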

\begin{proof}
The equiconsistency of a weakly compact cardinal and the reflection principle $\Refl(2, S^{\omega_2}_\omega)$ is proven in \cite{Magidor1982}. Thus, it will suffice to show that, starting with a weakly compact cardinal, we can force $\Refl^*(\omega_1, S^{\omega_2}_\omega)$. 

Let $\kappa$ be a weakly compact cardinal and assume \GCH. We define an iteration $\langle \mathbb{L}_\alpha, \dot{\mathbb{C}}_\beta \mid \alpha \leq \kappa, \beta<\kappa\rangle$, taken with countable supports. For $\alpha < \kappa$, let $\dot{\mathbb{C}}_\alpha$ be an $\mathbb{L}_\alpha$-name for a two-step iteration, where the first iterand is the lottery sum of all $\alpha$-directed closed forcing notions of size $|\alpha|$ from $V_{\alpha + \omega}^{\mathbb{L}_{\alpha}}$ and the second iterand is $\Col(\omega_1,\alpha)$. Let $\mathbb{L} = \mathbb{L}_\kappa$. 

$\mathbb{L}$ is $\kappa$-c.c.\ and $\sigma$-closed. It is clear that every cardinal $\omega_{1}<\beta<\kappa$ is collapsed, so $\Vdash_{\mathbb{L}}\check{\kappa}=\aleph_{2}$. We claim that $\Refl^*(\omega_1, S^{\omega_2}_\omega)$ holds in $V^{\mathbb{L}}$. To see this, let $\mathbb{P}$ be an $\omega_2$-directed closed forcing of size $\omega_2$ in $V^{\mathbb{L}}$. Let $\{\dot{S_{i}}\mid i<\omega_1\}$ be a collection of names for stationary subsets of $S_\omega^{\omega_2}$ in $V^{\mathbb{L}\ast\mathbb{P}}$. Let \[j\colon\langle V_{\kappa},\mathbb{L}\ast\mathbb{P},\{\dot{S}_{i}\mid i<\omega_{1}\},\in\rangle\to\langle M,j(\mathbb{L})\ast j(\mathbb{P}),\{j(\dot{S}_{i})\mid i<\omega_{1}\},\in\rangle\]
be a weakly compact embedding (we assume, for simplicity, that $\mathbb{P}\subseteq \kappa$).  

Let $G$ be a $V$-generic filter for $\mathbb{L}$, let $H$ be a $V[G]$-generic filter for $\mathbb{P}$, and let $C$ be a $V[G][H]$-generic filter for $\Col(\omega_1, \kappa)$. We now build in $V[G][H][C]$ an $M$-generic filter $K\ast J$ for $j(\mathbb{L})\ast j(\mathbb{P})$ such that $j``G\ast H \subseteq K\ast J$.

Since $\mathbb{L}$ is an iteration with bounded support at $\kappa$, for every $q\in\mathbb{L}$, $j(q)=q$. Thus, we can let the portion of $K$ up to stage $\kappa$ to be equal to $G$. Moreover, since $\mathbb{P}\in M$, we can pick the $\kappa^{\mathrm{th}}$ stage of the iteration $j(\mathbb{L})$ to be $\mathbb{P}\ast \Col(\omega_1,\kappa)$ and take $H\ast C$ to be the portion of $K$ at stage $\kappa$. For the stages strictly between $\kappa$ and $j(\kappa)$, we claim that we can find an $M[G][H][C]$-generic filter in $V[G][H][C]$. This is true because the forcing is $\sigma$-closed, $V[G][H][C]\models|M[G][H][C]|=\omega_1$ and $^{\omega}M[G][H][C]\subseteq M[G][H][C]$. Thus, can we enumerate the dense open sets of the tail of the iteration that lie in $M[G][H][C]$ in a sequence of length $\omega_1$ and build a suitable generic filter. This completes the construction of $K$. 

Finally, since $j(\mathbb{P})$ is $j(\kappa)$-directed closed and \[j``H = H\in M[G][H]\] (since we assumed that $\mathbb{P}\subseteq \kappa$ and therefore its conditions do not move under $j$), there is a condition $p^\star\in j(\mathbb{P})$ such that $p^\star\leq j(p)$ for every $p\in H$. We build in $V[G][H][C]$ an $M[K]$-generic filter, $J$, below this master condition. 

Thus, in $V[G][H][C]$, there is an elementary embedding \[\tilde{j}\colon V_{\kappa}[G][H]\to M[K][J].\] 

Since $\Col(\omega_1,\kappa)$ is $\sigma$-closed, for each $i<\omega_1$, \[V[G][H][C]\models ``S_i\text{ is stationary}."\] In particular, $M[K][J]\models ``S_i$ is stationary$,"$ and therefore, since, for all $i < \omega_1$, $\tilde{j}(S_i) \cap \kappa = S_i$, we have \[M[K][J]\models``j(\{S_i \mid i < \omega_1\})\text{ reflects simultaneously at }\kappa."\] Since $\tilde{j}$ is elementary, $\{S_i \mid i < \omega_1\}$ reflects simultaneously in $V_\kappa[G][H]$.
\end{proof}
We quote the following theorem from \cite[Section 6.3]{Magidor-Cummings-Foreman-Squares}.
\begin{thm} \label{thm:indestructible singular successor reflection}
If the existence of infinitely many supercompact cardinals is consistent, then it is consistent that, for all $n < \omega$, $Refl^*(< \aleph_\omega, S^{\aleph_{\omega + 1}}_{\leq \aleph_n})$ holds.
\end{thm}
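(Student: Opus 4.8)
The plan is to reconstruct the argument of \cite{Magidor-Cummings-Foreman-Squares} for producing indestructible simultaneous reflection at the successor of a singular cardinal. I would begin with an increasing sequence $\langle \kappa_n \mid n < \omega \rangle$ of supercompact cardinals, assume $\GCH$, and first perform a Laver-style preparation so that each $\kappa_n$ becomes indestructible under $\kappa_n$-directed closed forcing. One carries out the preparations in increasing order of $n$, using that the $\kappa_n$-preparation is $\kappa_n$-directed closed and hence preserves the (indestructible) supercompactness of each $\kappa_m$ with $m < n$. Call the resulting model $W$, and let $\kappa_\omega = \sup_n \kappa_n$. The idea is that the $\kappa_n$ will become the successive uncountable cardinals below $\aleph_\omega$, so that the large-cardinal reflection properties of $\kappa_\ell$ survive to witness the desired reflection at $\aleph_{\omega+1}$.

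Next I would collapse. Let $\mathbb{C} = \prod_{n < \omega} \Col(\kappa_{n-1}, {<}\kappa_n)$ (with $\kappa_{-1} = \omega$), taken with appropriate supports, so that $\kappa_n$ becomes $\aleph_{n+1}$, $\kappa_\omega$ becomes $\aleph_\omega$, and, since $|\mathbb{C}| = \kappa_\omega$ and $\mathbb{C}$ is $\kappa_\omega^+$-c.c., the cardinal $\kappa_\omega^+$ is preserved and becomes $\aleph_{\omega+1}$. The structural feature I would rely on is that, for each $\ell$, $\mathbb{C}$ factors as a product $\mathbb{C}_{<\ell} \times \mathbb{C}_{\geq \ell}$ with $|\mathbb{C}_{<\ell}| < \kappa_\ell$ and $\mathbb{C}_{\geq \ell}$ being $\kappa_\ell$-directed closed in $W$. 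Under this collapse, $S^{\aleph_{\omega+1}}_{\leq \aleph_n}$ becomes $S^{\kappa_\omega^+}_{<\kappa_n}$.

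The pure large-cardinal engine is the standard lemma that if $\kappa$ is supercompact and $\lambda > \kappa$ is regular, then any collection of fewer than $\kappa$ stationary subsets of $S^\lambda_{<\kappa}$ reflects simultaneously. To prove it I would take $j \colon V \to M$ witnessing $\lambda$-supercompactness of $\kappa$, set $\beta = \sup j``\lambda$ (so $\cf(\beta) > \omega$ and $\beta < j(\lambda)$), and check that each $j(S_i) \cap \beta$ is stationary in $\beta$ in $M$: since $j$ is continuous at ordinals of cofinality ${<}\kappa = \crit(j)$, the preimage of a club in $\beta$ under $\alpha \mapsto \sup j``\alpha$ is club in $\lambda$, and meets the stationary $S_i \subseteq S^\lambda_{<\kappa}$. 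As the family has size $\theta < \kappa = \crit(j)$, one has $j(\{S_i \mid i < \theta\}) = \{j(S_i) \mid i < \theta\}$, which $M$ sees as reflecting simultaneously at $\beta$; pulling back by elementarity gives simultaneous reflection in $V$. The main argument then runs as follows. In $W[\mathbb{C}]$, let $\mathbb{P}$ be $\aleph_{\omega+1}$-directed closed with $|\mathbb{P}| \leq \aleph_{\omega+1}$, and in $W[\mathbb{C}][\mathbb{P}]$ let $\{S_i \mid i < \theta\}$ with $\theta < \aleph_\omega$ be stationary subsets of $S^{\aleph_{\omega+1}}_{\leq \aleph_n} = S^{\kappa_\omega^+}_{<\kappa_n}$. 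Choose $\ell \geq n$ with $\theta < \kappa_\ell$. Viewing the forcing over $W[\mathbb{C}_{<\ell}]$, the two-step iteration $\mathbb{C}_{\geq \ell} \ast \dot{\mathbb{P}}$ is $\kappa_\ell$-directed closed, and $\kappa_\ell$ is (indestructibly) supercompact in $W[\mathbb{C}_{<\ell}]$ because $\mathbb{C}_{<\ell}$ is small; indestructibility then yields that $\kappa_\ell$ remains supercompact in $W[\mathbb{C}_{<\ell}][\mathbb{C}_{\geq \ell}][\mathbb{P}] = W[\mathbb{C}][\mathbb{P}]$. Applying the lemma there with $\kappa = \kappa_\ell$ and $\lambda = \kappa_\omega^+$ gives the required simultaneous reflection, establishing $\Refl^*(< \aleph_\omega, S^{\aleph_{\omega+1}}_{\leq \aleph_n})$ for every $n$.

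The main obstacle I anticipate is the bookkeeping that guarantees $\kappa_\ell$ survives as a supercompact cardinal into $W[\mathbb{C}][\mathbb{P}]$: this requires correctly ordering the small collapse $\mathbb{C}_{<\ell}$, the directed-closed tail $\mathbb{C}_{\geq \ell}$, and the external poset $\mathbb{P}$, and confirming that (indestructible) supercompactness is preserved by the small part via a Lévy--Solovay argument while the directed-closed part is absorbed by indestructibility. Secondary care is needed to verify that the collapse $\mathbb{C}$ indeed realizes the desired $\aleph$-structure (preserving $\kappa_\omega^+$ and turning $\kappa_\omega$ into $\aleph_\omega$) and that the simultaneous preparation genuinely leaves all the $\kappa_n$ indestructibly supercompact.
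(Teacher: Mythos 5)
Your overall architecture (Laver-prepare an $\omega$-sequence of supercompacts, collapse them to become the $\aleph_n$'s, reflect stationary subsets of $S^{\kappa_\omega^+}_{<\kappa_n}$ using a supercompact $\kappa_\ell$ with $\ell \geq n$) is indeed the shape of the argument in \cite{Magidor-Cummings-Foreman-Squares}, Section 6.3, which the paper simply cites without proof. But the step where you actually harvest the reflection is broken. You claim that $\mathbb{C}_{\geq \ell} \ast \dot{\mathbb{P}}$ is $\kappa_\ell$-directed closed and that indestructibility therefore keeps $\kappa_\ell$ supercompact in $W[\mathbb{C}][\mathbb{P}]$. Neither half survives inspection. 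First, $\mathbb{C}_{\geq \ell}$ contains the factor $\Col(\kappa_{\ell-1}, {<}\kappa_\ell)$, which is only $\kappa_{\ell-1}$-directed closed and collapses every cardinal in $(\kappa_{\ell-1}, \kappa_\ell)$; so the Laver indestructibility of $\kappa_\ell$ simply does not apply to $\mathbb{C}_{\geq \ell}$. Second, and decisively, $\kappa_\ell$ becomes $\aleph_{\ell+1}$, a successor cardinal, in $W[\mathbb{C}]$, so it cannot possibly remain supercompact (or even measurable) in $W[\mathbb{C}][\mathbb{P}]$ no matter how the pieces are ordered. There is no way to re-slice $\mathbb{C}$ to avoid this: any model in which $\kappa_\ell$ is still supercompact is one in which the collapse $\Col(\kappa_{\ell-1},{<}\kappa_\ell)$ has not yet been performed, and any model containing that collapse has destroyed $\kappa_\ell$. (As a further caution, even your claim that $\kappa_\ell$ remains \emph{indestructibly} supercompact after the small forcing $\mathbb{C}_{<\ell}$ is delicate, by the Hamkins--Shelah superdestructibility phenomenon; and the $\kappa_n$-preparation is not $\kappa_n$-directed closed as you assert --- it must instead be arranged to be $\kappa_{n-1}^+$-directed closed by starting it above $\kappa_{n-1}$.)

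The missing idea is that supercompactness of $\kappa_\ell$ survives into $W[\mathbb{C}][\mathbb{P}]$ only as a \emph{generic} elementary embedding, obtained by lifting. One fixes $j \colon W \to M$ witnessing sufficient supercompactness of $\kappa_\ell$, factors $j(\mathbb{C} \ast \dot{\mathbb{P}})$, and uses the absorption properties of the L\'{e}vy collapse $\Col(\kappa_{\ell-1}, {<}j(\kappa_\ell))$ (together with a master condition for $j``(G_{\mathbb{P}})$, which is where the $\lambda$-directed closure of $\mathbb{P}$ and the preparation actually enter) to build, possibly in a further generic extension by a sufficiently closed and sufficiently distributive auxiliary poset, an $M$-generic filter for $j(\mathbb{C}\ast\dot{\mathbb{P}})$ extending the pointwise image of the original generic. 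One then verifies that the auxiliary forcing preserves the stationarity of the given sets $S_i \subseteq S^{\aleph_{\omega+1}}_{\leq \aleph_n}$ (this is where the cofinality restriction and the choice $\ell > n$ are used), checks that $\sup j``\aleph_{\omega+1}$ is a simultaneous reflection point for the $j(S_i)$ in the lifted model, and pulls the reflection back by elementarity and downward absoluteness of stationarity. Your ``pure large-cardinal engine'' lemma is correct as a warm-up in the ground model, but it cannot be invoked in the final model, and the lifting machinery that replaces it is precisely the content of the CFM proof that the paper is citing.
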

The indestructibility of the stationary reflection at $\aleph_{\omega+1}$ is not mentioned explicitly in \cite{Magidor-Cummings-Foreman-Squares}, but it follows easily from the proof of the theorem.
\begin{thm}
If the existence of an inaccessible limit of supercompact cardinals is consistent, then it is consistent that, letting $\kappa$ be the least inaccessible cardinal, $\Refl^*(< \kappa, \kappa)$ holds.
\end{thm}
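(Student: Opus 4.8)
The plan is to follow the template of Theorem~\ref{thm:reflection at omega_2}, but with the single weakly compact cardinal replaced by the supercompacts cofinal in $\kappa$ and with the collapse to $\omega_2$ replaced by a collapse turning $\kappa$ into the least inaccessible. I would start with $\kappa$ an inaccessible limit of supercompacts and, forcing if necessary, assume $\GCH$; let $\langle \theta_i \mid i < \kappa \rangle$ enumerate the supercompacts below $\kappa$. First I would force with a reverse Easton iteration $\mathbb{R}$ of length $\kappa$, with Easton supports, whose only nontrivial stages occur at inaccessible $\alpha < \kappa$, where one forces with a collapse $\Col(\gamma_\alpha, <\alpha)$ (optionally preceded by a lottery sum of $\alpha$-directed closed posets of size $\le \alpha$, as in the first iterand of $\dot{\mathbb{C}}_\alpha$ of Theorem~\ref{thm:reflection at omega_2}), with $\gamma_\alpha$ chosen so that each inaccessible $\alpha < \kappa$ becomes a successor cardinal. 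Since $\mathbb{R}$ is $\kappa$-c.c.\ and each of its stages is small, $\kappa$ stays regular and strong limit in $W := V[G]$, while every inaccessible below $\kappa$ has been turned into a successor; hence in $W$ the cardinal $\kappa$ is the least inaccessible.

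To verify $\Refl^*(<\kappa, \kappa)$ in $W$, I would fix a $\kappa$-directed closed $\mathbb{P}$ with $|\mathbb{P}| \le \kappa$, let $H$ be $\mathbb{P}$-generic over $W$, and in $W[H]$ take stationary sets $\langle S_\xi \mid \xi < \delta \rangle \subseteq \kappa$ with $\delta < \kappa$. Applying Fodor's Lemma to the cofinality function on each $S_\xi$, every $S_\xi$ has a stationary subset concentrating on a single cofinality below $\kappa$; since $\delta < \kappa$ and $\kappa$ is regular, I can fix $\mu < \kappa$ with $S_\xi \cap S^\kappa_{<\mu}$ stationary for all $\xi < \delta$. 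I would then choose, in $V$, a supercompact $\theta$ with $\max(\delta, \mu) < \theta < \kappa$ and a $\kappa$-supercompactness embedding $j \colon V \to M$ with $\crit(j) = \theta$, $j(\theta) > \kappa$, $M^\kappa \subseteq M$, and $j[\kappa] \in M$.

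The heart of the argument is to lift $j$ to a \emph{set} elementary embedding $\tilde{j}$ on $H(\kappa^+)^{W[H]}$, exactly as the embedding $\tilde{j}$ of Theorem~\ref{thm:reflection at omega_2} is produced over $V[G][H][C]$. Because $M^\kappa \subseteq M$ forces $V_{\kappa+1} \subseteq M$, we get $j(\mathbb{R}) \restriction \kappa = \mathbb{R}$, so $G$ itself serves as the $M$-generic for the first $\kappa$ stages of $j(\mathbb{R})$, and the collapse occurring at stage $\theta$ of $\mathbb{R}$ is fixed by $j$ and causes no trouble. The images $j[G \restriction [\theta, \kappa)]$ and $\hat{j}[H]$ (where $\hat{j}$ is the extension of $j$ through $\mathbb{R}$) are directed sets of size $\le \kappa$ lying in the tail of $j(\mathbb{R})$ above $j(\theta) > \kappa$ and in $j(\mathbb{P})$ respectively, both of which are sufficiently directed closed: $\mathbb{R}$ is designed so that the relevant tail is $\kappa^+$-directed closed on the $M$-side, and $j(\mathbb{P})$ is $j(\kappa)$-directed closed. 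Hence master conditions exist below which the required $M$-generics can be built, in a further $\kappa$-closed generic extension of $W[H]$ that preserves all stationary subsets of $\kappa$. This produces $\tilde{j} \colon H(\kappa^+)^{W[H]} \to M^*$ with $\crit(\tilde{j}) = \theta$ and $\tilde{j}(\kappa) > \kappa$. Since each $S_\xi \cap S^\kappa_{<\mu}$ concentrates on cofinality $< \mu < \theta = \crit(\tilde{j})$, the usual $\sup \tilde{j}[\kappa]$ argument shows that each $\tilde{j}(S_\xi \cap S^\kappa_{<\mu})$ reflects at $\eta := \sup \tilde{j}[\kappa] < \tilde{j}(\kappa)$, simultaneously because $\delta < \theta$; pulling back through $\tilde{j}$ yields $\alpha < \kappa$ at which every $S_\xi \cap S^\kappa_{<\mu}$, hence every $S_\xi$, reflects, and this reflection is genuine in $W[H]$ because $H(\kappa^+)^{W[H]}$ computes stationarity of subsets of $\alpha$ correctly.

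The main obstacle is the apparent conflict in the last two steps: making $\kappa$ the least inaccessible forces us to collapse every supercompact $\theta < \kappa$, so in $W$ no such $\theta$ is even inaccessible, and yet the reflection argument requires an embedding with critical point $\theta$. This is precisely the phenomenon already present in Theorem~\ref{thm:reflection at omega_2}, where $\kappa$ becomes $\omega_2$ and ceases to be weakly compact, and it is resolved in the same way: one never lifts $j$ to a class embedding of $W[H]$, only to a set embedding of $H(\kappa^+)^{W[H]}$ living in a further generic extension, so the smallness of $\theta$ in $W$ is irrelevant. The two points demanding genuine care are (a) arranging the collapse $\mathbb{R}$ so that the tail of $j(\mathbb{R})$ above $j(\theta)$ is $\kappa^+$-directed closed, which is what supplies the master conditions used to lift $j$; and (b) choosing the auxiliary extension that builds the $M$-generics to be $\kappa$-closed, so that it preserves the stationarity of the $S_\xi$ and the reflection witnessed in $M^*$ descends to honest simultaneous reflection in $W[H]$.
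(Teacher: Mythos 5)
Your proposal is correct and follows the same overall architecture as the paper's proof: an Easton-support iteration of collapses below $\kappa$ killing every smaller inaccessible, followed by thinning the given stationary sets to concentrate on a fixed cofinality $<\mu$, choosing a supercompact $\theta > \max(\delta,\mu)$, lifting a $\kappa$-supercompactness embedding with critical point $\theta$ through the whole forcing, and reflecting at $\sup j[\kappa]$. The one genuine divergence is in how the collapse is indexed, and it changes the mechanics of the lift. The paper fixes an external sequence $\langle \mu_i \rangle$ interleaving the supercompacts and iterates $\Col(\mu_{i+1}, <\mu_{i+2})$; applying $j$ then inflates the critical stage to $\Col(\mu_i, <j(\mu_{i+1}))$, which swallows the entire tail plus $\mathbb{Q}$ but forces the auxiliary generic-building poset to be only $\mu_i$-closed (it collapses $\kappa$), so the paper needs the restriction to $S^\kappa_{<\mu_i}$ already for \emph{preservation} of stationarity, not just for the $\sup j[\kappa]$ argument. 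Your recipe is local (collapse at each inaccessible), so $j(\mathbb{R}) \restriction \kappa = \mathbb{R}$ and every new stage of $j(\mathbb{R})$ sits above $\kappa$ and is $\kappa^+$-directed closed on the $M$-side; the master-condition generics can then be built by a poset that is $\kappa^+$-closed over $W[H]$ (or, under $\GCH$, constructed outright in $W[H]$), which is what makes your claim (b) -- preservation of \emph{all} stationary subsets of $\kappa$ -- actually achievable. Be aware that this claim is specific to your indexing and would be false for the paper's iteration. Two small points: the assertion that $\mathbb{R}$ is $\kappa$-c.c.\ is unjustified if $\kappa$ is not Mahlo (the inaccessibility of $\kappa$ in $W$ should instead come from factoring $\mathbb{R}$ as a small initial segment followed by a highly closed tail); and the ``main obstacle'' you describe is not one -- the embedding is chosen in $V$, where $\theta$ is still supercompact, and lifted, so there is no need to retreat to a set embedding of $H(\kappa^+)^{W[H]}$ as in the weakly compact case.
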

\begin{proof}
Let $\kappa$ be the least inaccessible limit of supercompact cardinals, and let $\{\mu_{i}\mid i<\kappa\}$ be a continuous, increasing sequence of cardinals cofinal in $\kappa$ such that:
\begin{enumerate}
  \item $\mu_0 = \omega$;
  \item for all limit ordinals $i < \kappa$, $\mu_{i+1} = \mu_i^+$;
  \item for all successor ordinals $i < \kappa$, $\mu_{i+1}$ is supercompact. 
\end{enumerate}
Let $\mathbb{P}$ be the Easton-support iteration of $\Col(\mu_{i+1},<\mu_{i+2})$ for $i<\kappa$. In $V^{\mathbb{P}}$, we have $\mu_i = \aleph_i$ for all $i < \kappa$ and $\kappa$ is the least inaccessible cardinal. We claim that $\Refl^*(<\kappa, \kappa)$ holds in $V^{\mathbb{P}}$. To see this, let $\mathbb{Q}$ be a $\kappa$-directed closed forcing in $V^{\mathbb{P}}$, let $\eta < \kappa$, and let $\{S_\xi\mid \xi<\eta\}$ be a sequence of stationary subsets of $\kappa$ in $V^{\mathbb{P}\ast\mathbb{Q}}$.

Since $\kappa$ is non-Mahlo, by thinning out the stationary sets if necessary, we can assume without loss of generality that there is a successor ordinal $i < \kappa$ such that $\eta < \mu_i$ and, for all $\xi < \eta$, $S_\xi \subseteq S^\kappa_{<\mu_i}$. Let $j\colon V\to M$ be a $\kappa$-supercompact embedding with critical point $\mu_{i+1}$.

$\mathbb{P}$ can be written as $\mathbb{P}_{i}\ast \Col(\mu_{i},<\mu_{i+1})\ast\mathbb{P}^{i+1}$, where $\mathbb{P}_{i}$ is the first $i$ steps in the iteration and $\mathbb{P}^{i+1}$ is $\mu_{i+1}$-directed closed, so we can write $j(\mathbb{P}\ast\mathbb{Q})$ as $\mathbb{P}_{i}\ast \Col(\mu_{i},<j(\mu_{i+1}))\ast j(\mathbb{P}^{i+1})\ast j(\mathbb{Q})$. By standard techniques, if $G$ is $\mathbb{P}$-generic over $V$ and $H$ is $\mathbb{Q}$-generic over $V[G]$, there is a $\mu_i$-closed poset $\mathbb{R}$ in $V[G][H]$ such that, if $I$ is $\mathbb{R}$-generic over $V[G][H]$, then, in $V[G][H][I]$, we can lift $j$ to an elementary embedding $j:V[G][H] \rightarrow M[G][H][I]$. Since $\mu_i$-closed forcing preserves stationary subsets of $S^\kappa_{<\mu_i}$ when $\kappa$ is inaccessible, we have that, for all $\xi < \eta$, $S_\xi$ is stationary in $V[G][H][I]$. Since $j``\kappa \in M[G][H][I]$ and is $(<\mu_i)$-club in $\delta = \sup(j``\kappa)$, we conclude that, for all $\xi < \eta$, $M[G][H]{i} \models ``j(S_\xi) \cap \delta$ is stationary in $\delta."$ In particular, since $j(\{S_\xi \mid \xi < \eta\}) = \{j(S_\xi) \mid \xi < \eta\}$, $M[G][H][I] \models ``j(\{S_\xi \mid \xi < \eta\})$ reflects simultaneously$."$ By elementarity, $\{S_\xi \mid \xi < \eta\}$ reflects simultaneously in $V[G][H]$.
\end{proof}

\section{Consistency results}\label{sec:consistency results}
In this section, we will apply the methods of Section \ref{sec: forcing preliminaries} to show that some of the results from Section \ref{sec: zfc results} are optimal. We start by separating principles of simultaneous reflection of finitely many stationary sets. In what follows, $I[\lambda]$ denotes the approachability ideal 
on $\lambda$. For information on $I[\lambda]$, we direct the reader to \cite{eisworth}.

\begin{thm}
  Let $\mu < \lambda$ be regular cardinals with $\lambda^{<\lambda} = \lambda$, and let $n$ be a natural number. Assume that $\Refl^*(n, S^\lambda_\mu)$ holds and $S^\lambda_\mu \in I[\lambda]$. Then there is a generic extension preserving cardinals and cofinalities in which $\Refl(n, S^\lambda_\mu)$ holds and $\Refl(n+1, S^\lambda_\mu)$ fails.  
\end{thm}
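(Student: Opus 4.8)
The plan is to prove the separation by forcing with a $\lambda$-strategically closed poset $\mathbb{Q}$ that adds a partition of $S^\lambda_\mu$ into $n+1$ stationary pieces with no common reflection point, and then to recover $\Refl(n,S^\lambda_\mu)$ in the extension by playing the hypothesis $\Refl^*(n,S^\lambda_\mu)$ off against a $\lambda$-directed closed ``cover'' of $\mathbb{Q}$. Concretely, a condition of $\mathbb{Q}$ should be a bounded approximation $p$ specifying a labelling $c_p\colon S^\lambda_\mu\cap(\gamma^p+1)\to n+1$, whose fibres $S_i=c^{-1}(i)$ are the intended sets, together with, for each $\delta\in S^\lambda_{>\mu}$ with $\delta\le\gamma^p$, a club $d^p_\delta\subseteq\delta$ and an index $g_p(\delta)\le n$ such that $d^p_\delta\cap c_p^{-1}(g_p(\delta))=\emptyset$; conditions are ordered by end-extension. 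In $V^{\mathbb{Q}}$ each $S_i$ is stationary by genericity, and the witnessing clubs guarantee that at every $\delta$ of cofinality $>\mu$ some $S_i\cap\delta$ is nonstationary, so $\{S_0,\dots,S_n\}$ reflects at no $\delta$ and $\Refl(n+1,S^\lambda_\mu)$ fails.

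The hypothesis $S^\lambda_\mu\in I[\lambda]$ is what makes $\mathbb{Q}$ work: at an approachable point $\alpha\in S^\lambda_\mu$ the approachability sequence supplies a canonical cofinal subset of $\alpha$ from which the labelling can be continued, and a winning strategy for the nonempty player in the closure game can be built so that at every limit of the play a designated fibre is kept nonstationary, providing the required $d_\gamma$ at game-limits of cofinality $>\mu$. This yields $\lambda$-strategic closure, whence $\mathbb{Q}$ adds no bounded subsets of $\lambda$; since $\lambda^{<\lambda}=\lambda$ we have $|\mathbb{Q}|=\lambda$, so $\mathbb{Q}$ is $\lambda^+$-c.c., and all cardinals and cofinalities are preserved. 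Crucially, I would impose \emph{no} coherence among the $d_\delta$, so that $\mathbb{Q}$ does \emph{not} generically introduce a narrow square sequence: by Theorem~\ref{thm:non-reflecting-from-square} and Theorem~\ref{fullReflectionThm} any $\square(\lambda,<\kappa)$ sequence with $\kappa<\lambda$ would already destroy $\Refl(2,S^\lambda_\mu)$, which is incompatible with keeping $\Refl(n,S^\lambda_\mu)$ for $n\ge 2$.

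To preserve $\Refl(n,S^\lambda_\mu)$ I cannot apply $\Refl^*$ to $\mathbb{Q}$ itself, since $\mathbb{Q}$ is only strategically closed. Instead I would construct a $\lambda$-directed closed forcing $\mathbb{D}$ of size $\le\lambda$ that projects onto $\mathbb{Q}$, with the quotient $\mathbb{D}/\mathbb{Q}$ being $<\lambda$-distributive (or at least preserving stationary subsets of $S^\lambda_\mu$); the extra generic data in $\mathbb{D}$ play the role that a thread plays in Lemma~\ref{lem: square+killing stat+thread is kappa closed} and Lemma~\ref{indexed_iteration_lemma}, namely resolving the amalgamation of conditions at limits of cofinality in $(\mu,\lambda)$. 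Since $\mathbb{D}$ is $\lambda$-directed closed of size $\le\lambda$, $\Refl^*(n,S^\lambda_\mu)$ gives $\Refl(n,S^\lambda_\mu)$ in $V^{\mathbb{D}}$. Given $n$ stationary sets in $V^{\mathbb{Q}}$, I would push them up to $V^{\mathbb{D}}$ (using that the quotient preserves their stationarity), reflect them simultaneously at some $\delta<\lambda$ there, and then note that, because $V^{\mathbb{Q}}\subseteq V^{\mathbb{D}}$ and $\cf(\delta)$ is preserved, the restrictions $S_j\cap\delta$ remain stationary in $\delta$ in $V^{\mathbb{Q}}$; thus they already reflect simultaneously at $\delta$ in $V^{\mathbb{Q}}$.

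The main obstacle is precisely the construction of the directed closed cover $\mathbb{D}$ and the control of its quotient. One needs a $\lambda$-directed closed poset that adds the same non-reflecting configuration as $\mathbb{Q}$ yet whose quotient does not kill the stationary sets one wishes to reflect; and, by the square obstruction above, this cover cannot be obtained by threading a narrow square sequence. Reconciling the competing demands—enough closure and genericity to invoke $\Refl^*(n)$ and to transfer reflection downward, but without ever producing a $\square(\lambda,<\kappa)$ sequence (which would collapse the whole hierarchy down to $\neg\Refl(2)$)—is the heart of the argument, and is exactly where the fine structure of $I[\lambda]$ and a careful fragile-set bookkeeping in the style of Lemma~\ref{lem: square+killing stat+thread is kappa closed} must be deployed.
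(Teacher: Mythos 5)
Your setup of the poset that kills $\Refl(n+1,S^\lambda_\mu)$ is essentially the paper's: bounded approximations to $n+1$ disjoint subsets of $S^\lambda_\mu$ together with the requirement that at every $\beta$ of uncountable cofinality some piece is non-stationary below $\beta$, with $\lambda$-strategic closure giving distributivity and preservation of cardinals. But the second half of your argument is not a proof: you explicitly defer the construction of the $\lambda$-directed closed cover $\mathbb{D}$ and the control of its quotient to ``the heart of the argument,'' and that is precisely the step that does all the work. Moreover, the shape you propose for it --- a single fixed $\mathbb{D}$ whose quotient over $\mathbb{Q}$ preserves stationary subsets of $S^\lambda_\mu$ --- cannot be carried out as stated, because any natural such cover must thread a club through the complement of some $S_i$ and therefore destroys the stationarity of that $S_i$; there is no one quotient that preserves all the stationary sets you might be handed.

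The missing idea is a pigeonhole argument that makes the cover depend on the family to be reflected. With $n+1$ pairwise disjoint pieces $S_0,\dots,S_n$ and only $n$ given stationary sets $A_0,\dots,A_{n-1}$, there must be an $i\le n$ such that every $A_j\setminus S_i$ is stationary (otherwise two distinct $S_i,S_{i'}$ would have stationary intersection). For that $i$ one takes $\mathbb{T}_i$ to be the forcing shooting a club through $\lambda\setminus S_i$; the two-step iteration $\mathbb{S}\ast\dot{\mathbb{T}}_i$ has a dense $\lambda$-directed closed subset of size $\le\lambda$ (conditions where the working part of the club is decided and reaches the top of the square-free approximation), so $\Refl^*(n,S^\lambda_\mu)$ applies to it directly --- no separate cover $\mathbb{D}$ is needed. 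The hypothesis $S^\lambda_\mu\in I[\lambda]$ is then used exactly where you did not place it: not for the strategic closure of $\mathbb{Q}$, but to show that $\mathbb{T}_i$ preserves the stationarity of each $A_j$. Since $A_j\setminus S_i\in I[\lambda]$, one builds an internally approachable chain $\langle N_\eta\mid\eta<\mu\rangle$ with $\sup(N\cap\lambda)=\delta\in A_j\setminus S_i$ and a decreasing $\mu$-sequence of conditions capturing a name for a club, whose closure point $\delta$ can legally be added to the condition because $\delta\notin S_i$; this master condition forces $\dot C\cap A_j\neq\emptyset$. Finally, simultaneous reflection of the $A_j$ at some $\delta$ in the $\mathbb{T}_i$-extension transfers down since stationarity of $A_j\cap\delta$ is downward absolute. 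Without the index selection and the $I[\lambda]$-based preservation argument, your proposal does not close.
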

It is interesting to compare this result to the result of Beaudoin in \cite{Beaudoin1991}, where similar separation of variations of $\PFA$ is obtained, using similar method. Let $\PFA^+(n)$ be the assertion that for every proper forcing, $\mathbb{P}$, a collection of $\aleph_1$ many dense subsets of $\mathbb{P}$, $\mathcal{D}$, and $n$ many names for stationary subsets of $\omega_1$, $\dot{S}_i$, there is a filter which meet every member of $\mathcal{D}$ and realizes each $\dot{S}_i$ as a stationary set. In \cite{Beaudoin1991}, Beaudoin shows that for any model of $\PFA^{+}(n)$ and $m  > n$ there is a generic extension in which $\PFA^+(n)$ still holds and there is a collection of $m$ stationary sets that does not reflect together.   
\begin{proof}
  Let $\mathbb{S}$ be the forcing that adds $n+1$ disjoint subsets of $S^\lambda_\mu$ that do not reflect simultaneously using bounded conditions. More precisely, conditions in $\mathbb{S}$ are functions 
  $s: (n+1) \times \gamma^s \rightarrow 2$ satisfying the following conditions, where, for $i < n+1$, $s_i: \gamma^s \rightarrow 2$ is defined by $s_i(\alpha) = s(i, \alpha)$ and, in a slight abuse of notation, we will also think of $s_i$ as the subset of $\gamma^s$ whose characteristic function is $s_i$.
  \begin{enumerate}
    \item{$\gamma^s < \lambda$.}
    \item{For all $i < n+1$, $s_i \subseteq S^\lambda_\mu$.}
    \item{For all $i < j < n+1$, $s_i \cap s_j = \emptyset$.}
    \item{For all ordinals $\beta \leq \gamma^s$ with $\cf(\beta) > \omega$, there is $i < n+1$ such that $s_i \cap \beta$ is not stationary in $\beta$.}
  \end{enumerate}
  If $s,t \in \mathbb{S}$, then $t \leq s$ iff $t \supseteq s$. Standard arguments show that $\mathbb{S}$ is $\lambda$-strategically closed and hence $\lambda$-distributive. Let $G$ be $\mathbb{S}$-generic over $V$, and, in $V[G]$, let $\{S_i \mid i < n+1 \}$ be the generic sets, i.e., for $i < n+1$, $S_i = \bigcup_{s \in G} s_i$. Clearly, $\{S_i \mid i < n+1 \}$ does not reflect simultaneously. Also, a simple genericity argument shows that, for every $i < n+1$, $S_i$ is stationary in $\lambda$. Thus, $\Refl(n+1, S^\lambda_\mu)$ fails in $V[G]$. 
  
  For $i < n+1$, let $\mathbb{T}_i$ be the forcing that adds a club in $\lambda$ disjoint from $S_i$. Conditions of $\mathbb{T}_i$ are closed, bounded subsets of $\lambda$ disjoint from $S_i$, and $\mathbb{T}_i$ is ordered by end-extension. By arguments similar to those found in Section \ref{sec: forcing preliminaries}, in $V$, for each $i < n+1$, $\mathbb{S}\ast \dot{\mathbb{T}}_i$ contains a dense $\lambda$-directed closed subset, namely the set of $(s, \dot{t}) \in \mathbb{S} \ast \dot{\mathbb{T}}_i$ such that:
  \begin{enumerate}
    \item{there is $t \in V$ such that $s \Vdash_{\mathbb{S}}``\dot{t} = \check{t}"$;}
    \item{$\gamma^s = \max(t) + 1$.}
  \end{enumerate}

  Let us work in $V[G]$ and show that $\Refl(n, S^\lambda_\mu)$ holds there. Fix $n$ stationary subsets of $S^\lambda_\mu$, $A_0, \dots, A_{n-1}$. Since $\{S_i \mid i < n+1\}$ are pairwise disjoint, there is $i < n+1$ such that, for all $j < n$, $A_j \setminus S_i$ is stationary in $\lambda$. Otherwise, by the pigeonhole principle, there would be $i < i^\prime < n+1$ and $j < n$ such that $A_j \setminus S_i$ and $A_j\setminus S_{i^\prime}$ are both non-stationary and hence $S_i \cap S_{i^\prime}$ is stationary. Since, in $V$, $\mathbb{S} \ast \dot{\mathbb{T}}_i$ has a dense $\lambda$-directed closed subset and $\Refl^*(n, S^\lambda_\mu)$ holds, if $H$ is $\mathbb{T}_i$-generic over $V[G]$, then $\Refl(n, S^\lambda_\mu)$ holds in $V[G\ast H]$.
  \begin{claim}
    For all $j < n$, $A_j$ is stationary in $V[G\ast H]$.
  \end{claim}

  \begin{proof}
    Work in $V[G]$. Fix $j < n$, $t \in \mathbb{T}_i$, and $\dot{C}$, a $\mathbb{T}_i$-name forced by $t$ to be a club in $\lambda$. Since $S^\lambda_\mu \in I[\lambda]$ in $V$ and $\mathbb{S}$ preserves all cofinalities, $S^\lambda_\mu \in I[\lambda]$ in $V[G]$. In particular, $A_j \setminus S_i \in I[\lambda]$. Thus, letting $\theta$ be a sufficiently large, regular cardinal and $\triangleleft$ a fixed well-ordering of $H(\theta)$, we can find an internally approachable chain $\langle N_\eta \mid \eta < \mu \rangle$ of elementary substructures of $(H(\theta), \in, \triangleleft)$ such that:
    \begin{enumerate}
      \item{$\mathbb{T}_i, t, \dot{C} \in N_0$;}
      \item{for all $\eta < \mu$, $|N_\eta| < \mu$;}
      \item{letting $N = \bigcup_{\eta < \mu} N_\eta$ and $\delta = \sup(N \cap \lambda)$, we have $\delta \in A_j \setminus S_i$.}
    \end{enumerate}
    Since $\mathbb{T}_i$ is $\mu$-closed, it is now straightforward to build a decreasing sequence $\langle t_\xi \mid \xi < \mu \rangle$ 
    of conditions from $\mathbb{T}_i$ satisfying the following.
    \begin{enumerate}
      \item{$t_0 = t$;}
      \item{for all $\xi < \mu$, $t_\xi \in N_{\xi + 1}$;}
      \item{for all $\eta < \mu$, $\sup(N_\eta \cap \lambda) < \max(t_{\eta + 1})$;}
      \item{for all $\eta < \mu$, there is $\alpha > \sup(N_\eta \cap \lambda)$ such that 
	  $t_{\eta + 1} \Vdash ``\check{\alpha} \in \dot{C}."$}
    \end{enumerate}
    Since $\delta \not\in S_i$, we have $t^* := \{\delta\} \cup \bigcup_{\xi < \mu} t_\xi \in \mathbb{T}_i$. Moreover, $t^*$ forces $\delta \in \dot{C}$; in particular, 
    $t^* \Vdash_{\mathbb{T}_i}``\dot{C} \cap \check{A_j} \neq \emptyset."$ Thus, $A_j$ remains stationary in $V[G\ast H]$. 
  \end{proof}
  Since $\Refl(n, S^\lambda_\mu)$ holds in $V[G\ast H]$, we obtain that $\{A_j \mid j < n\}$ reflects simultaneously in $V[G\ast H]$ and hence also in $V[G]$.
\end{proof}

\begin{rem}
  The assumption that $S^\lambda_\mu\in I[\lambda]$ occurs naturally in many cases. For example, for any regular, uncountable cardinal $\lambda$, $S^\lambda_\omega \in I[\lambda]$. Also, if $\lambda$ is strongly inaccessible, then $S^\lambda_\mu \in I[\lambda]$ for all regular $\mu < \lambda$, and if $\lambda = \mu^+$ and $\mu$ is regular then $S^{\lambda}_{<\mu}\in I[\lambda]$.

In addition, starting with infinitely many supercompact cardinals, iterating Levy collapses, and then forcing to shoot a club through the set of approachable points will yield a model in which $\Refl^*(< \kappa, S^\lambda_\mu)$ and $S^\lambda_\mu \in I[\lambda]$ both hold, where $\mu < \kappa$, $\mu$ is regular, $\kappa$ is singular, and $\lambda = \kappa^+$.
\end{rem}

Let us show next that $\Refl^*(< \kappa, S)$ actually implies the indestructibility of $\Refl(< \kappa, S)$ under a wider class of forcings. This fact will be useful in the results to follow. We first recall the following definitions.

\begin{defn}
Suppose $\mathbb{P}$ is a forcing poset, $\lambda$ is a cardinal such that $\lambda^{<\lambda}=\lambda$, and $\theta$ is a sufficiently large regular cardinal. For $M \prec H(\theta)$, let $\mathbb{P}_M = \mathbb{P} \cap M$. 
\begin{enumerate}
\item{Suppose $M \prec H(\theta)$. A condition $p \in \mathbb{P}$ is \emph{strongly $(M,\mathbb{P})$-generic} if $p$ forces that $G_{\mathbb{P}} \cap M$ is a $\mathbb{P}_M$-generic filter over $V$. Equivalently, every dense, open subset of $\mathbb{P}_M$ is pre-dense below $p$ in $\mathbb{P}$.}
\item{Suppose $M \prec H(\theta)$. $M$ is \emph{$\lambda$-suitable} for $\mathbb{P}$ if:
\begin{itemize}
\item{$\mathbb{P} \in M$;}
\item{$|M| = \lambda$;}
\item{$^{<\lambda}M \subseteq M$.}
\end{itemize}}
\item{Let $\lambda$ be a regular cardinal. $\mathbb{P}$ is \emph{strongly $\lambda$-proper} if, whenever $M \prec H(\theta)$ is $\lambda$-suitable for $\mathbb{P}$ and $p \in M \cap \mathbb{P}$, there is $q \leq p$ such that $q$ is strongly $(M, \mathbb{P})$-generic.}
\end{enumerate}
\end{defn}

\begin{thm}
Suppose $\lambda$ is a regular cardinal, $\lambda^{<\lambda} = \lambda$, $S \subseteq \lambda$ is stationary, and $\kappa \leq \lambda$. Then the following are equivalent:
\begin{enumerate}
\item{$\Refl^*(<\kappa, S)$;}
\item{$\Refl(< \kappa, S)$ holds in any forcing extension by a $\lambda$-directed closed, strongly $\lambda$-proper forcing poset.}
\end{enumerate}
\end{thm}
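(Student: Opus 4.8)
The plan is to prove the two implications separately; the substance is in $(1)\Rightarrow(2)$, while $(2)\Rightarrow(1)$ reduces to a remark about suitable models. For $(2)\Rightarrow(1)$ I would show that every $\lambda$-directed closed poset $\mathbb{P}$ with $|\mathbb{P}|\leq\lambda$ is automatically strongly $\lambda$-proper, so that $(2)$ applies to it and yields exactly $\Refl^*(<\kappa,S)$. The key observation is that if $M\prec H(\theta)$ is $\lambda$-suitable for $\mathbb{P}$, then $^{<\lambda}M\subseteq M$ forces $\lambda\subseteq M$: if $\beta<\lambda$ were least with $\beta\notin M$, then $\beta\subseteq M$, so the identity sequence $\langle\xi\mid\xi<\beta\rangle$ lies in $^{<\lambda}M\subseteq M$, whence its domain $\beta$ is in $M$, a contradiction. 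Since moreover $\mathbb{P}\in M$ and $|\mathbb{P}|\leq\lambda$, a surjection $f:\lambda\to\mathbb{P}$ in $M$ witnesses $\mathbb{P}\subseteq M$, so $\mathbb{P}_M=\mathbb{P}$. Then $\dot{G}\cap M=\dot{G}$ is simply the whole generic, which is trivially $\mathbb{P}_M$-generic, so every condition is strongly $(M,\mathbb{P})$-generic and $q=p$ witnesses strong $\lambda$-properness.

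For $(1)\Rightarrow(2)$, let $\mathbb{Q}$ be $\lambda$-directed closed and strongly $\lambda$-proper, and suppose for contradiction that some $p\in\mathbb{Q}$ forces $\langle\dot{S}_i\mid i<\theta\rangle$ (with $\theta<\kappa$) to be stationary subsets of $S$ that do not reflect simultaneously. I would fix large $\chi$ and an $M\prec H(\chi)$ that is $\lambda$-suitable for $\mathbb{Q}$ with $\mathbb{Q},p,S,\langle\dot{S}_i\mid i<\theta\rangle\in M$ (note $\theta\subseteq M$ since $\theta<\lambda\subseteq M$), and use strong $\lambda$-properness to pick $q\leq p$ strongly $(M,\mathbb{Q})$-generic. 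Let $G$ be generic with $q\in G$, and set $\mathbb{Q}_M=\mathbb{Q}\cap M$ and $G_M=G\cap\mathbb{Q}_M$. Since $^{<\lambda}M\subseteq M$ and $\mathbb{Q}$ is $\lambda$-directed closed, $\mathbb{Q}_M$ is a $\lambda$-directed closed poset of size $\leq\lambda$, and by strong genericity $G_M$ is $\mathbb{Q}_M$-generic over $V$; hence $\Refl^*(<\kappa,S)$ gives that $\Refl(<\kappa,S)$ holds in $V[G_M]$.

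The heart of the argument is then a transfer between $V[G_M]$ and $V[G]$. Each $\dot{S}_i\in M$, so $S_i:=\dot{S}_i^G\in M[G]$; invoking the standard theory of strongly generic conditions, $M[G]=M[G_M]\subseteq V[G_M]$, so in fact $S_i\in V[G_M]$, computed from $G_M$ alone. Because $\mathbb{Q}$ is $<\lambda$-closed, every club of $V[G_M]$ is a club of $V[G]$, so each $S_i$, being stationary in $V[G]$, remains a stationary subset of $S$ in the inner model $V[G_M]$. Applying $\Refl(<\kappa,S)$ in $V[G_M]$ to $\{S_i\mid i<\theta\}$ yields a $\delta<\lambda$ with $\cf(\delta)>\omega$ at which all the $S_i$ reflect. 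Finally, since $\mathbb{Q}$ and $\mathbb{Q}_M$ are both $<\lambda$-closed, $V$, $V[G_M]$, and $V[G]$ share the same bounded subsets of $\lambda$, hence the same clubs in $\delta$; thus each $S_i\cap\delta$ is stationary in $\delta$ in $V[G]$ as well, and $\{S_i\mid i<\theta\}$ reflects simultaneously at $\delta$ in $V[G]$, contradicting that $q\leq p$ forces non-reflection.

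The directed-closure and cardinality of $\mathbb{Q}_M$, together with the downward preservation and $\delta$-level absoluteness of stationarity, are routine and all rest on $<\lambda$-closure. The main obstacle — the one technical input I would need to deploy carefully — is the behaviour of strongly generic conditions: that a strongly $(M,\mathbb{Q})$-generic $q\in G$ makes $G_M=G\cap\mathbb{Q}_M$ genuinely $\mathbb{Q}_M$-generic over $V$ and, crucially, that names lying in $M$ are interpreted from $G_M$ alone (i.e.\ $M[G]=M[G_M]$). This is precisely what places each $S_i$ into the small, $\lambda$-directed closed extension $V[G_M]$ where the indestructibility hypothesis $\Refl^*(<\kappa,S)$ can be brought to bear.
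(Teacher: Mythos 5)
Your proposal is correct, and the main implication is argued along a genuinely different route from the paper's. The $(2)\Rightarrow(1)$ direction is the same observation the paper makes in one line (posets of size $\leq\lambda$ are trivially strongly $\lambda$-proper), which you simply flesh out. For $(1)\Rightarrow(2)$, both arguments rest on the same two pillars: that $\mathbb{P}_M=\mathbb{P}\cap M$ is $\lambda$-directed closed of size $\leq\lambda$ (so $\Refl^*$ applies to it), and that strong genericity together with elementarity makes any name in $M$ for a subset of $\lambda$ evaluate identically from $G$ and from $G_M$. But the paper then works downward and syntactically: it transfers the non-reflection witnesses (the club names $\dot{C}_\beta$) to $\mathbb{P}_M$-names, concludes via two claims that $p$ must force some $\dot{S}_{\eta,M}$ to be non-stationary over $\mathbb{P}_M$, and derives the contradiction by pushing the killing club up into $V[G]$. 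You instead work upward and semantically: you place the interpreted sets $S_i$ inside $V[G_M]$, note they remain stationary there, apply $\Refl(<\kappa,S)$ in $V[G_M]$ to get a reflection point $\delta$, and transfer the reflection to $V[G]$ using the fact that neither $\mathbb{Q}$ nor $\mathbb{Q}_M$ adds bounded subsets of $\lambda$, so stationarity in $\delta<\lambda$ is absolute between the two extensions. Your version is arguably cleaner, avoiding the auxiliary club names and the two claims about them; its one black box, ``$M[G]=M[G_M]$,'' is exactly the content the paper makes explicit by defining the names $\dot{S}_{\eta,M}$ via $q\Vdash_{\mathbb{P}_M}\check\alpha\in\dot{S}_{\eta,M}$ iff $q\Vdash_{\mathbb{P}}\check\alpha\in\dot{S}_\eta$, and the needed instance (decidability of $\check\alpha\in\dot{S}_i$ by a dense open subset of $\mathbb{P}_M$ lying in $M$) is routine, so this is a legitimate appeal rather than a gap.
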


\begin{proof}
If $\mathbb{P}$ is a forcing poset and $|\mathbb{P}| \leq \lambda$, then $\mathbb{P}$ is trivially strongly $\lambda$-proper, so (2) easily implies (1). Thus, assume $\Refl^*(< \kappa, S)$ holds, and let $\mathbb{P}$ be a $\lambda$-directed closed, strongly $\lambda$-proper forcing poset. Suppose for sake of contradiction that there is $p \in \mathbb{P}$, $\mu < \kappa$, and a set of $\mathbb{P}$-names $\dot{\mathcal{S}} = \{\dot{S}_\eta \mid \eta < \mu \}$ such that $p \Vdash_{\mathbb{P}}``\dot{\mathcal{S}}$ is a set of stationary subsets of $\check S$ that does not reflect simultaneously.$"$ We may thus fix a set of $\mathbb{P}$-names $\dot{\mathcal{C}} = \{\dot{C}_\beta \mid \beta \in S^\lambda_{>\omega} \}$ such that, for all $\beta \in S^\lambda_{>\omega}$, $p \Vdash_{\mathbb{P}}``\dot{C}_\beta$ is club in $\beta$ and there is $\eta < \mu$ such that $\dot{C}_\beta \cap \dot{S}_\eta = \emptyset."$

Let $\theta$ be a sufficiently large regular cardinal, let $\vartriangleleft$ be a fixed well-ordering of $H(\theta)$, and let $M \prec (H(\theta), \in, \vartriangleleft)$ be $\lambda$-suitable for $\mathbb{P}$ with $p, S, \dot{\mathcal{S}}, \dot{\mathcal{C}} \in M$. Let $\mathbb{P}_M = \mathbb{P} \cap M$. Since $\mathbb{P}$ is $\lambda$-directed closed and $^{<\lambda}M \subseteq M$, $\mathbb{P}_M$ is a $\lambda$-directed closed forcing of size $\lambda$. Thus, since $\Refl^*(< \kappa, S)$ holds, $\Vdash_{\mathbb{P}_M} ``\Refl(< \kappa, S)."$

For $\eta < \mu$, we may form a $\mathbb{P}_M$-name $\dot{S}_{\eta, M}$ for a subset of $S$ such that, for all $\alpha \in S$ and all $q \in \mathbb{P}_M$, $q \Vdash_{\mathbb{P}_M} ``\check\alpha \in \dot{S}_{\eta, M}"$ iff $q \Vdash_{\mathbb{P}} ``\check\alpha \in \dot{S}_\eta"$. Let $\dot{\mathcal{S}}_M = \{\dot{S}_{\eta, M} \mid \eta < \mu\}$. We may similarly define $\mathbb{P}_M$ names $\dot{C}_{\beta, M}$ for $\beta \in S^\lambda_{>\omega}$.

\begin{claim}
For all $\beta \in S^\lambda_{>\omega}$, $p \Vdash_{\mathbb{P}_M}``\dot{C}_{\beta, M}$ is club in $\check\beta."$
\end{claim} 

\begin{proof}
For $\alpha < \beta$, let $D_\alpha = \{q \in \mathbb{P} \mid$ for some $\alpha^* \in (\alpha, \beta)$, $q\Vdash_{\mathbb{P}}``\check\alpha \in \dot{C}_\beta."\}$ For limit ordinals $\gamma < \beta$, let $E_\gamma = \{q \in \mathbb{P} \mid q \Vdash_{\mathbb{P}}``\check\gamma \in \dot{C}_\beta"$ or, for some $\alpha < \gamma$, $q\Vdash_{\mathbb{P}}``\sup(\dot{C}_\beta \cap \check\gamma) < \check\alpha."\}$. Since $p \Vdash_{\mathbb{P}}``\dot{C}_\beta$ is club in $\check\beta,"$ each $D_\alpha$ and $E_\gamma$ is dense below $p$ in $\mathbb{P}$. In addition, each $D_\alpha$ and $E_\gamma$ is in $M$ and hence, by elementarity, is dense below $p$ in $\mathbb{P}_M$. The claim follows.
\end{proof}

\begin{claim}
For all $\beta \in S^\lambda_{>\omega}$, $p \Vdash_{\mathbb{P}_M}``$There is $\eta < \check\mu$ such that $\dot{C}_{\beta, M} \cap \dot{S}_{\eta, M} = \emptyset."$
\end{claim}

\begin{proof}
  Suppose for sake of contradiction that $\beta \in S^\lambda_{>\omega}$, $q \in \mathbb{P}_M$, $q \leq p$, and $q \Vdash_{\mathbb{P}_M}``$For all $\eta < \check\mu,$ $\dot{C}_{\beta, M} \cap \dot{S}_{\eta, M} \neq \emptyset."$ Since $\mathbb{P}_M$ is $\lambda$-directed closed and $^{<\lambda}M \subseteq M$, we may assume that there is a sequence $\langle \alpha_\eta \mid \eta < \mu \rangle \in M$ such that, for all $\eta < \mu$, $q \Vdash_{\mathbb{P}_M} ``\check{\alpha}_\eta \in \dot{C}_{\beta, M} \cap \dot{S}_{\eta, M}."$ But then, for all $\eta < \mu$, $q \Vdash_{\mathbb{P}}``\check{\alpha}_\eta \in \dot{C}_\beta \cap \dot{S}_\eta,"$ contradicting the assumption that $p \Vdash_{\mathbb{P}}``$There is $\eta < \check\mu$ such that $\dot{C}_\beta \cap \dot{S}_\beta = \emptyset."$
\end{proof}

Therefore, if $p \Vdash_{\mathbb{P}_M} ``\dot{\mathcal{S}}_M$ is a set of stationary sets$,"$ then $p$ forces $\dot{\mathcal{S}}_M$ to be a counterexample to $\Refl(< \kappa, S)$, which is a contradiction. Thus, it must be the case that $p \Vdash_{\mathbb{P}_M} ``$For some $\eta < \check\mu$, $\dot{S}_{\eta, M}$ is non-stationary.$"$ 

Fix $\eta < \mu$ and $q \in \mathbb{P}_M$ such that $q \leq p$ and $q \Vdash_{\mathbb{P}_M}``\dot{S}_{\eta, M}$ is non-stationary$."$ Let $\dot{D}$ be a $\mathbb{P}_M$-name such that $q \Vdash_{\mathbb{P}_M}``\dot{D}$ is club in $\check\lambda$ and $\dot{D} \cap \dot{S}_\eta = \emptyset."$ Find $r \in \mathbb{P}$ such that $r \leq q$ and $r$ is strongly $(M, \mathbb{P})$-generic, and let $G$ be $\mathbb{P}$-generic over $V$ with $r \in G$. 

Then $G_M := G \cap M$ is $\mathbb{P}_M$-generic over $V$ and $V[G_M] \subseteq V[G]$. Note that the interpretations of $\dot{S}_{\eta, M}$ in $V[G_M]$ and of $\dot{S}_\eta$ in $V[G]$ are equal. Call this interpretation $S_\eta$. Since $p \in G$, $S_\eta$ is a stationary subset of $S$ in $V[G]$ and hence also in $V[G_M]$. Let $D$ be the interpretation of $\dot{D}$ in $V[G_M]$. Since $q \in G_M$, $D$ is club in $\lambda$ and $D \cap S_\eta = \emptyset$, contradicting the fact that $S_\eta$ is stationary in $V[G_M]$.
\end{proof}

Let $\lambda^{<\lambda} = \lambda$, let $\mathbb{S} = \mathbb{S}(\lambda, 1)$, and, in $V^{\mathbb{S}}$, let $\mathcal{C}$ be the generic $\square(\lambda)$-sequence added by 
$\mathbb{S}$ and $\mathbb{T} = \mathbb{T}(\mathcal{C})$. In $V^{\mathbb{S}}$, let $\mathbb{P}$ be the iteration of length $2^\lambda$, taken with supports of size $<\lambda$, destroying 
the stationarity of $\mathbb{T}$-fragile subsets of $\lambda$. For each $\alpha < 2^\lambda$, let $\dot{S}_\alpha$ be a $\mathbb{P}_\alpha$-name for the $\mathbb{T}$-fragile subset of $\lambda$ destroyed by the $\alpha^{\mathrm{th}}$ iterand of $\mathbb{P}$, and let $\dot{C}_\alpha$ be a $\mathbb{P}_\alpha \ast \mathbb{T}$-name for a club in $\lambda$ disjoint from $\dot{S}_\alpha$.

\begin{lem} \label{strongProperIteration}
  $\mathbb{S} \ast \dot{\mathbb{P}} \ast \dot{\mathbb{T}}$ is strongly $\lambda$-proper.
\end{lem}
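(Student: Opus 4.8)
The plan is to reduce strong $\lambda$-properness of $\mathbb{R} := \mathbb{S} \ast \dot{\mathbb{P}} \ast \dot{\mathbb{T}}$ to a single structural fact: that restricting the support of a condition to $M$ is a projection from $\mathbb{R}$ onto $\mathbb{R}_M := \mathbb{R} \cap M$. I would first record the effect of $\lambda$-suitability. If $M \prec H(\theta)$ is $\lambda$-suitable, then ${}^{<\lambda}M \subseteq M$ forces $\lambda \subseteq M$: if $\alpha = \min(\lambda \setminus M)$, the identity enumeration of $\alpha$ is a sequence of length $<\lambda$ from $M$, hence lies in $M$, whence $\alpha \in M$, a contradiction. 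Consequently, since $\lambda^{<\lambda} = \lambda$ gives $|\mathbb{S}| = \lambda$ and $\Vdash_{\mathbb{S}} |\dot{\mathbb{T}}| = \check\lambda$, the entire $\mathbb{S}$-coordinate and the names for the $\mathbb{T}$-coordinate are absorbed into $M$; only the iteration $\mathbb{P}$, whose length $2^\lambda$ exceeds $\lambda$, genuinely reflects. Thus $\mathbb{R}_M$ differs from $\mathbb{R}$ only in that the supports of its $\mathbb{P}$-parts are required to lie in $a_M := M \cap 2^\lambda$.

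Next I would pass to the dense $\lambda$-directed closed subset $\mathbb{U} \subseteq \mathbb{R}$ furnished by Lemma~\ref{lem: square+killing stat+thread is kappa closed}; since $\mathbb{U} \in M$ and strong genericity is witnessed on a dense set, it suffices to strengthen the given $p \in \mathbb{R} \cap M$ into $\mathbb{U} \cap M$ and work there. Using $M \prec H(\theta)$ I would check that $a_M$ is closed under the name-dependencies of the $\mathbb{T}$-fragile sets $\dot{S}_\beta$ (each $\dot{S}_\beta \in M$ depends on $<\lambda$ coordinates, all lying in $M$), so that $\mathbb{R}_M$ really is the sub-iteration of $\mathbb{R}$ on $a_M$ and the map $\pi$ sending a condition to the restriction of its $\mathbb{P}$-support to $a_M$ (projecting each coordinate's name accordingly) is well defined into $\mathbb{R}_M$. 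Granting that $\pi$ is a projection, strong $(M,\mathbb{R})$-genericity of $p$ itself is immediate: since $p \in \mathbb{R}_M$ we have $\pi(p) = p$, so for any $r \le p$ one has $\pi(r) \le p$; given a $V$-dense open $D \subseteq \mathbb{R}_M$, density produces $r^{**} \le \pi(r)$ with $r^{**} \in D$, and the projection property yields $r' \le r$ with $\pi(r') \le r^{**}$, whence $r' \le r^{**}$, a common extension. Thus every $D$ is predense below $p$, and we may simply take $q = p$ to witness strong $\lambda$-properness.

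The crux — and the place I expect the real work — is verifying that $\pi$ is a projection, i.e.\ that given $r \in \mathbb{U}$ and $r^{**} \le \pi(r)$ in $\mathbb{R}_M$ one can amalgamate $r$ and $r^{**}$ into a single condition $r' \le r$ of $\mathbb{U}$ with $\pi(r') \le r^{**}$, namely the merge that uses the longer data of $r^{**}$ on the coordinates in $a_M$ and the data of $r$ elsewhere. The $\mathbb{S}$- and $\mathbb{T}$-coordinates cause no trouble because they are absorbed into $M$, and the coordinates outside $a_M$ are untouched; the difficulty is exactly that of Lemma~\ref{lem: square+killing stat+thread is kappa closed}, namely that at each club-shooting coordinate $\beta \in a_M$ the merged closed bounded set must remain disjoint from $\dot{S}_\beta$, and that the fragile-set names agree across $\mathbb{R}_M \subseteq \mathbb{R}$. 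This is where the thread component $\dot{\mathbb{T}}$ is essential: the defining clause of $\mathbb{U}$ forces the top point of each shot club to lie in $\dot{C}_\beta$, a club disjoint from $\dot{S}_\beta$, so the amalgamated club can be closed off through points of $\dot{C}_\beta$ and thereby avoids $\dot{S}_\beta$. Carrying this out uniformly across the $<\lambda$ active coordinates, exactly as in the construction of the $\lambda$-directed closed subset in Lemma~\ref{lem: square+killing stat+thread is kappa closed} but now relative to $M$, is the main obstacle and the technical heart of the argument.
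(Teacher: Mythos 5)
Your proposal is essentially the paper's proof: there, one restricts a condition of the dense $\lambda$-directed closed set $\mathbb{U}$ to $M$ (your map $\pi$), finds an extension of the restriction inside the given dense open $D \subseteq (\mathbb{S}\ast\dot{\mathbb{P}}\ast\dot{\mathbb{T}})\cap M$, and amalgamates with the original condition, which is exactly the projection property you isolate. The one place your account of the mechanism is off is that the real care goes not into closing the merged clubs off through $\dot{C}_\alpha$ (in the amalgamation, at each coordinate one closed set simply end-extends the other, so no new closure point is needed; the $\max(\check{c}_\alpha)\in\dot{C}_\alpha$ clause is what gives the $\lambda$-closure of $\mathbb{U}$ itself), but into defining the restricted name $\dot{p}'(\alpha)$ by cases --- forced to equal $\check{c}_\alpha$ below conditions forcing $\check{c}_\alpha\cap\dot{S}_\alpha=\emptyset$ and to equal $\emptyset$ otherwise --- so that the restriction really is a legitimate condition lying in $M$ even though $(s,\dot{p}'\restriction\alpha)$ is weaker than $(s,\dot{p}\restriction\alpha)$.
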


\begin{proof}
  Let $\theta$ be a sufficiently large regular cardinal and let $M \prec H(\theta)$ be $\lambda$-suitable for $\mathbb{S} \ast \dot{\mathbb{P}} \ast \dot{\mathbb{T}}$. We 
  claim that the empty condition is strongly $(M, \mathbb{S} \ast \dot{\mathbb{P}} \ast \dot{\mathbb{T}})$-generic. To see this, let $(s, \dot{p}, \dot{t}) \in 
  \mathbb{S} \ast \dot{\mathbb{P}} \ast \dot{\mathbb{T}}$, and let $D$ be a dense, open subset of $(\mathbb{S} \ast \dot{\mathbb{P}} \ast \dot{\mathbb{T}}) \cap M$. By extending $(s, \dot{p}, \dot{t})$ 
  if necessary, we may 
  suppose that $(s, \dot{p}, \dot{t})$ is in $\mathbb{U}$, the $\lambda$-directed closed subset of $\mathbb{S} \ast \dot{\mathbb{P}} \ast \dot{\mathbb{T}}$ described in the proof of Lemma 
  \ref{lem: square+killing stat+thread is kappa closed}. Let $a \in V$ be such that $s \Vdash_{\mathbb{S}}``\dom(\dot{p}) = \check{a}"$ and, for $\alpha \in a$, let $c_\alpha \in V$ be such that 
  $(s, \dot{p} \restriction \alpha) \Vdash_{\mathbb{S} \ast \dot{\mathbb{P}}_\alpha}``\dot{p}(\alpha) = \check{c}_\alpha"$. Let $t \in V$ be such that $s \Vdash_{\mathbb{S}}``\dot{t} = \check{t}."$

  Let $(s', \dot{p}', \dot{t}') \in (\mathbb{S} \ast \dot{\mathbb{P}} \ast \dot{\mathbb{T}}) \cap M$ satisfy the following requirements.
  \begin{enumerate}
    \item{$s' = s$ and $s' \Vdash_{\mathbb{S}}``\dot{t}' = \check t."$}
    \item{$s' \Vdash ``\dom(p') = \check a \cap M."$}
    \item{For all $\alpha \in a \cap M$, $\dot{p}'(\alpha)$ is forced by $s'$ to have the following property: if $(r, \dot{q}) \in \mathbb{S} \ast \dot{\mathbb{P}}_\alpha$ 
      and $(r, \dot{q}) \Vdash_{\mathbb{S} \ast \dot{\mathbb{P}}_\alpha} ``\check c_\alpha \cap \dot{S}_\alpha = \emptyset,"$ then $(r, \dot{q}) \Vdash_{\mathbb{S} \ast \dot{\mathbb{P}}_\alpha} 
      ``\dot{p}'(\alpha) = \check c_\alpha"$ and, if $(r, \dot{q}) \Vdash_{\mathbb{S} \ast \dot{\mathbb{P}}_\alpha} ``\check c_\alpha \cap \dot{S}_\alpha \neq \emptyset,"$ then 
      $(r, \dot{q}) \Vdash_{\mathbb{S} \ast \dot{\mathbb{P}}_\alpha} ``\dot{p}'(\alpha) = \emptyset."$}
  \end{enumerate}
  Such an $(s', \dot{p}', \dot{t}')$ can be defined, as ${^{<\lambda}}M \subseteq M$. Find $(s'', \dot{p}'', \dot{t}'') \leq (s', \dot{p}', \dot{t}')$ in $D$. 
  It is routine to verify that $(s'', \dot{p}'', \dot{t}'')$ and $(s, \dot{p}, \dot{t})$ are compatible in $\mathbb{S} \ast \dot{\mathbb{P}} \ast \dot{\mathbb{T}}$. This 
  shows that $D$ is pre-dense below the empty condition and hence that the empty condition is strongly $(M, \mathbb{S} \ast \dot{\mathbb{P}} \ast \dot{\mathbb{T}})$-generic.
\end{proof}

Note that, by essentially the same proof, Lemma \ref{strongProperIteration} remains true if $\mathbb{S}$ is any 
forcing of the form $\mathbb{S}(\lambda, < \kappa)$ and $\mathbb{T}$ is the associated 
threading forcing or if $\kappa < \lambda$ is an infinite, regular cardinal, $\mathbb{S} = \mathbb{S}^{\mathrm{ind}}(\lambda, \kappa)$, 
$i < \kappa$, and $\mathbb{T} = \mathbb{T}_i(\mathcal{C})$, where $\mathcal{C}$ is the generic $\square^{\mathrm{ind}}(\lambda, \kappa)$-sequence added by $\mathbb{S}$.

\begin{thm} \label{squareReflectionThm}
  Suppose $\lambda$ is an uncountable, regular cardinal, $\lambda^{<\lambda} = \lambda$, $S \subseteq \lambda$ is stationary, and 
  $\Refl^*(S)$ holds. Then there is a forcing 
  extension preserving all cofinalities and cardinalities $\leq \lambda$ in which $\square(\lambda)$ and $\Refl(S)$ both hold.
\end{thm}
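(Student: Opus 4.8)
The plan is to take $\mathbb{Q}$ to be exactly the forcing produced by Corollary \ref{cor: square with no fragile sets}, namely $\mathbb{Q} = \mathbb{S}(\lambda,1) \ast \dot{\mathbb{P}}$, and to let $\mathbb{T} = \mathbb{T}(\mathcal{C})$ be the associated threading forcing in $V^{\mathbb{Q}}$, where $\mathcal{C}$ is the generic $\square(\lambda)$ sequence. By Corollary \ref{cor: square with no fragile sets}, $\mathbb{Q}$ is $\lambda^+$-c.c.\ and forces $\square(\lambda)$, the poset $\mathbb{Q} \ast \dot{\mathbb{T}}$ contains a dense $\lambda$-directed closed subset, and there are no $\mathbb{T}$-fragile stationary subsets of $\lambda$ in $V^{\mathbb{Q}}$. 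First I would record that $\mathbb{Q}$ preserves all cofinalities and cardinalities $\leq \lambda$: since $\mathbb{Q} \ast \dot{\mathbb{T}}$ is, via its dense subset, ${<}\lambda$-closed, it preserves such cofinalities, and as $V^{\mathbb{Q}}$ sits between $V$ and $V^{\mathbb{Q} \ast \dot{\mathbb{T}}}$, a squeezing argument shows the intermediate model preserves them as well; the $\lambda^+$-c.c.\ handles everything above $\lambda$. Thus it remains only to verify $\Refl(S)$ in $V^{\mathbb{Q}}$, where $\square(\lambda)$ already holds.

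Next I would establish $\Refl(S)$ in the \emph{larger} model $V^{\mathbb{Q}\ast\dot{\mathbb{T}}}$. By Lemma \ref{strongProperIteration}, $\mathbb{Q} \ast \dot{\mathbb{T}}$ is strongly $\lambda$-proper, and it also contains a dense $\lambda$-directed closed subset. Hence the theorem characterizing $\Refl^*(<\kappa, S)$ in terms of forcing extensions (with $\kappa = 2$, so that $\Refl^*(<2, S)$ is just $\Refl^*(S)$) applies to $\mathbb{Q}\ast\dot{\mathbb{T}}$ (equivalently, to its dense $\lambda$-directed closed subset) and, using the hypothesis $\Refl^*(S)$, yields $\Refl(S)$ in $V^{\mathbb{Q}\ast\dot{\mathbb{T}}}$. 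Note that $\square(\lambda)$ \emph{fails} in this larger model, since $\mathbb{T}$ adds a thread through $\mathcal{C}$; the entire point of the argument is to descend back to $V^{\mathbb{Q}}$.

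Finally, to transfer reflection down, I would argue as follows. Let $T \subseteq S$ be stationary in $V^{\mathbb{Q}}$. Since $V^{\mathbb{Q}}$ has no $\mathbb{T}$-fragile stationary subsets of $\lambda$, $T$ is not $\mathbb{T}$-fragile, so there is a condition $t_0 \in \mathbb{T}$ forcing $\check{T}$ to remain stationary. Fix a $\mathbb{T}$-generic $G$ over $V^{\mathbb{Q}}$ with $t_0 \in G$; then in $V^{\mathbb{Q}}[G] = V^{\mathbb{Q}\ast\dot{\mathbb{T}}}$, $T$ is stationary and $\Refl(S)$ holds, so $T$ reflects at some $\alpha < \lambda$ with $\cf(\alpha) > \omega$. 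Because $\mathbb{Q}\ast\dot{\mathbb{T}}$ preserves cofinalities $\leq \lambda$, we have $\cf^{V^{\mathbb{Q}}}(\alpha) = \cf^{V^{\mathbb{Q}}[G]}(\alpha) > \omega$, and since every club of $\alpha$ in $V^{\mathbb{Q}}$ remains a club of $\alpha$ in $V^{\mathbb{Q}}[G]$, the set $T \cap \alpha$, which is stationary in $\alpha$ in $V^{\mathbb{Q}}[G]$, is already stationary in $\alpha$ in $V^{\mathbb{Q}}$. Thus $T$ reflects at $\alpha$ in $V^{\mathbb{Q}}$, giving $\Refl(S)$ there. The crux of the argument—and the main subtlety—is precisely this last descent: $\square(\lambda)$ and $\Refl(S)$ cannot coexist in $V^{\mathbb{Q}\ast\dot{\mathbb{T}}}$, so one must use non-fragility only to find a \emph{single} threaded extension in which $T$ survives as a stationary set (rather than hoping $\mathbb{T}$ preserves stationarity outright), and then exploit the downward absoluteness of the reflection point together with the preservation of its cofinality to pull the reflection back into the intended model $V^{\mathbb{Q}}$.
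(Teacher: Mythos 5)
Your proposal is correct and follows essentially the same route as the paper: force with $\mathbb{S}(\lambda,1)$ followed by the iteration killing $\mathbb{T}$-fragile stationary sets, then recover $\Refl(S)$ by using non-fragility to pass to a threaded extension in which the given stationary set survives, applying strong $\lambda$-properness together with $\Refl^*(S)$ there, and pulling the reflection point back down. The only (harmless) differences are that you invoke Corollary \ref{cor: square with no fragile sets} to get $\square(\lambda)$ in the extension rather than re-running the mutual-genericity product argument, and that you spell out the cardinal-preservation and downward-absoluteness steps that the paper leaves implicit.
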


\begin{proof}
  Let $\mathbb{S} = \mathbb{S}(\lambda, 1)$. Let $G$ be $\mathbb{S}$-generic over $V$ and, in $V[G]$, let $\mathcal{C} = \langle C_\alpha \mid \alpha < \lambda \rangle$ 
  be the generically-added $\square(\lambda)$-sequence. Let $\mathbb{T} = \mathbb{T}(\mathcal{C})$, and let $\mathbb{P}$ be an iteration of length $2^\lambda$, 
  taken with supports of size $< \lambda$, destroying the stationarity of all $\mathbb{T}$-fragile 
  subsets of $S$. Let $H$ be $\mathbb{P}$-generic over $V[G]$. We claim that $V[G\ast H]$ is the desired model.

  We first argue that $\square(\lambda)$ holds. In fact, $\mathcal{C}$ remains a $\square(\lambda)$-sequence in $V[G\ast H]$. 
  Work in $V[G]$, and suppose for sake of contradiction that there is $p \in H$ and a $\mathbb{P}$-name $\dot{t}$ 
  such that $p \Vdash_{\mathbb{P}}``\dot{t}$ is a thread through $\check{\mathcal{C}}."$ By Lemma \ref{lem: square+killing stat+thread is kappa closed},
  $\mathbb{P}$ has a dense $\lambda$-directed closed subset in $V[G]^\mathbb{T}$. In particular, $\mathbb{P} \times \mathbb{P}$ 
  is $\lambda$-distributive in $V[G]$, so $\lambda$ remains regular after forcing with $\mathbb{P} \times \mathbb{P}$. 
  Let $H_0 \times H_1$ be $\mathbb{P} \times \mathbb{P}$-generic over $V[G]$, with $(p,p) \in H_0 \times H_1$. For $i < 2$, 
  let $t_i$ be the interpretation of $\dot{t}$ in $V[G \ast H_i]$. By mutual genericity, $t_0 \neq t_1$. In $V[G\ast (H_0 \times H_1)]$,
  find $\alpha \in \acc(t_0) \cap \acc(t_1)$ such that $t_0 \cap \alpha \neq t_1 \cap \alpha$. Then $C_\alpha = t_0 \cap \alpha \neq t_1 \cap \alpha = C_\alpha$, 
  which is a contradiction. Thus, $\mathcal{C}$ remains a $\square(\lambda)$-sequence in $V[G\ast H]$.
 
  Next, we show that $\Refl(S)$ holds. Let $T \in V[G\ast H]$ be a stationary subset of $S$. By our definition of 
  $\mathbb{P}$, $T$ is not $\mathbb{T}$-fragile, so there is $t \in \mathbb{T}$ such that $t \Vdash_{\mathbb{T}}``T$ is stationary$."$ Let $I$ be $\mathbb{T}$-generic 
  over $V[G\ast H]$ with $t \in I$. Since 
  $\mathbb{S} \ast \dot{\mathbb{P}} \ast \dot{\mathbb{T}}$ is strongly $\lambda$-proper and has a dense $\lambda$-directed closed subset, 
  $\Refl(S)$ holds in $V[G\ast H\ast I]$. Thus, $T$ reflects in $V[G\ast H\ast I]$ so, \emph{a fortiori}, $T$ reflects in $V[G\ast H]$ as well.
\end{proof}

\begin{thm}
  Suppose $\lambda$ is an uncountable, regular cardinal, $\lambda^{<\lambda} = \lambda$, $S \subseteq \lambda$ is stationary, and 
  $\Refl^*(2, S)$ holds. Then there is a forcing extension preserving all cofinalities and cardinalities $\leq \lambda$ in which:
  \begin{enumerate}
    \item{$\square(\lambda, 2)$ holds;}
    \item{for every $T \subseteq S$ such that $T$ and $S \setminus T$ are both stationary, $\{T, S \setminus T\}$ reflects simultaneously.}  
  \end{enumerate}
\end{thm}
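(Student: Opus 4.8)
The plan is to imitate the proof of Theorem \ref{squareReflectionThm}, replacing the width-$1$ square forcing by the width-$2$ one and $\Refl^*(S)$ by $\Refl^*(2,S)$, with the genuinely new work concentrated in the reflection of \emph{partitions}. Let $\mathbb{S} = \mathbb{S}(\lambda, <3)$ be the forcing adding a $\square(\lambda,2)$ sequence $\mathcal{C}$, and let $\mathbb{T} = \mathbb{T}(\mathcal{C})$ be its threading forcing. Exactly as in the width-$1$ case, the combined poset $\mathbb{S}\ast\dot{\mathbb{T}}$ has a dense $\lambda$-directed closed subset: a single thread supplies, at every limit level, a coherent club that can be placed in one of the (at most two) slots of the approximation, so the failure of closure present in the pure threading forcing disappears once the thread is built together with the sequence. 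I would then take $\mathbb{P}$ to be the iteration of length $2^\lambda$, with supports of size $<\lambda$, destroying the stationarity of every $\mathbb{T}$-fragile subset of $S$, and let $V[G\ast H]$ be the extension by $\mathbb{S}\ast\dot{\mathbb{P}}$. By Lemma \ref{lem: square+killing stat+thread is kappa closed}, $\mathbb{S}\ast\dot{\mathbb{P}}\ast\dot{\mathbb{T}}$ has a dense $\lambda$-directed closed subset and, over $V^{\mathbb{S}\ast\mathbb{T}}$, $\mathbb{P}$ is $\lambda$-directed closed; by the remark following Lemma \ref{strongProperIteration}, $\mathbb{S}\ast\dot{\mathbb{P}}\ast\dot{\mathbb{T}}$ is strongly $\lambda$-proper; and, as in Corollary \ref{cor: square with no fragile sets}, $V[G\ast H]$ contains no $\mathbb{T}$-fragile stationary subset of $S$.

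To see that $\mathcal{C}$ remains a $\square(\lambda,2)$ sequence, i.e.\ that $\mathbb{P}$ adds no thread, I would run the mutual-genericity argument of Corollary \ref{cor: square with no fragile sets} with \emph{three} copies rather than two, since the width is now $2$. As $\mathbb{P}$ is $\lambda$-directed closed over $V^{\mathbb{S}\ast\mathbb{T}}$ and $\mathbb{T}$ is $\lambda$-distributive over $V^{\mathbb{S}}$, the composition $\mathbb{T}\ast(\mathbb{P}\times\mathbb{P}\times\mathbb{P})$ is $\lambda$-distributive over $V^{\mathbb{S}}$, so $\mathbb{P}\times\mathbb{P}\times\mathbb{P}$ preserves the regularity of $\lambda$. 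If $\mathbb{P}$ added a thread, then $\mathbb{P}\times\mathbb{P}\times\mathbb{P}$ would add three mutually generic, pairwise distinct threads $D_0,D_1,D_2$; at club-many $\alpha$ all of $D_0\cap\alpha, D_1\cap\alpha, D_2\cap\alpha$ would lie in $\mathcal{C}_\alpha$, and since $|\mathcal{C}_\alpha|\le 2$ two of them must agree at $\alpha$, so some fixed pair agrees on an unbounded set and the corresponding two threads coincide, contradicting mutual genericity.

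For the reflection of partitions, fix in $V[G\ast H]$ a set $T\subseteq S$ with $T$ and $S\setminus T$ both stationary. Neither is $\mathbb{T}$-fragile, and the point on which everything turns is that $\mathbb{T}$ preserves the stationarity of \emph{both} simultaneously; granting this, I would fix a $\mathbb{T}$-generic $I$ over $V[G\ast H]$ keeping $T$ and $S\setminus T$ stationary. Since $\mathbb{S}\ast\dot{\mathbb{P}}\ast\dot{\mathbb{T}}$ is $\lambda$-directed closed and strongly $\lambda$-proper, the equivalence characterizing $\Refl^*(2,S)$ by indestructibility under such posets shows that $\Refl(2,S)$ holds in $V[G\ast H\ast I]$; hence $\{T,S\setminus T\}$ reflects simultaneously there, at some $\delta$ of uncountable cofinality. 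Because every club of $\delta$ lying in $V[G\ast H]$ also lies in $V[G\ast H\ast I]$, the ordinal $\delta$ witnesses simultaneous reflection of $\{T,S\setminus T\}$ already in $V[G\ast H]$. One cannot improve this to full $\Refl(2,S)$, since $\square(\lambda,2)$ refutes it by Theorem \ref{thm:non-reflecting-from-square}; the non-reflecting pairs produced there are necessarily $\mathbb{T}$-fragile and are exactly what the iteration destroys.

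The main obstacle is the claim, italicized above, that $\mathbb{T}$ preserves the stationarity of $T$ and of $S\setminus T$ at the same time. Having killed every $\mathbb{T}$-fragile stationary subset of $S$, each of $T$ and $S\setminus T$ is kept stationary by \emph{some} condition of $\mathbb{T}$, but a priori by different ones, and forcing below a condition that preserves $T$ could destroy the stationarity of $S\setminus T$. To close this gap I would show that whether a given subset of $S$ is forced stationary is decided by the trivial condition of $\mathbb{T}$ — a weak homogeneity of the threading forcing inherited from the genericity of $\mathcal{C}$ — so that non-fragility upgrades to the assertion that the trivial condition forces $\check{T}$ to be stationary for \emph{every} stationary $T\subseteq S$, whence any $\mathbb{T}$-generic preserves both cells of the partition at once. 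Establishing this homogeneity (equivalently, that the fragile-killing iteration leaves $\mathbb{T}$ preserving all stationary subsets of $S$, and not merely those that are non-fragile in the weak sense) is the delicate step separating the partition statement from the single-set Theorem \ref{squareReflectionThm}.
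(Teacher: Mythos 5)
Your setup, the choice of $\mathbb{S}(\lambda,2)$, the iteration $\mathbb{P}$ killing $\mathbb{T}$-fragile subsets of $S$, and the preservation of the $\square(\lambda,2)$ sequence via three mutually generic copies of $\mathbb{P}$ all match the paper's proof. The gap is in the partition step, and it sits exactly where you flagged it: the homogeneity lemma you propose --- that the trivial condition of $\mathbb{T}$ decides the stationarity of every subset of $S$, equivalently that every condition of $\mathbb{T}$ preserves every non-fragile stationary subset of $S$ --- is not just unproven but false. The proof of Theorem \ref{fullReflectionThm} produces, for all sufficiently large $\alpha$ and each $i$, stationary sets $S_{\alpha,i}=\{\beta\in S \mid \forall C\in\mathcal{C}_\beta,\ C\cap\alpha\neq C_{\alpha,i}\}$; these are not $\mathbb{T}$-fragile (so they survive the iteration), yet any condition $C_{\beta,i}$ with $\alpha\in\acc(C_{\beta,i})$ and $C_{\beta,i}\cap\alpha=C_{\alpha,i}$ forces the generic thread to avoid $S_{\alpha,i}$ on a final segment of its accumulation points, killing its stationarity. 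So non-fragility cannot be upgraded to ``the trivial condition forces stationarity,'' and a single $\mathbb{T}$-generic chosen naively may well destroy one cell of your partition.

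The paper's resolution needs much less than homogeneity: it suffices that \emph{at least one} of the two cells is forced stationary by the trivial condition. This is proved by a product argument. If there were $t_0,t_1\in\mathbb{T}$ with $t_i\Vdash_{\mathbb{T}}``T_i$ is non-stationary$"$ for both $i<2$, take $I_0\times I_1$ generic for $\mathbb{T}\times\mathbb{T}$ below $(t_0,t_1)$; in the resulting extension both $T_0$ and $T_1$ are non-stationary, hence so is $S=T_0\cup T_1$, contradicting the fact that $S$ remains stationary there (this is where the dense $\lambda$-directed closed subset of $\mathbb{S}\ast\dot{\mathbb{P}}\ast\dot{\mathbb{T}}^2$ --- the $\rho=2$ case of the product lemma, which your outline never invokes --- enters). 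Having fixed, say, $\Vdash_{\mathbb{T}}``T$ is stationary$"$, you then choose $t\in\mathbb{T}$ witnessing the non-fragility of the \emph{other} cell $S\setminus T$ and force below $t$: in $V[G\ast H\ast I]$ both cells are stationary and $\Refl(2,S)$ holds by the indestructibility theorem, so the pair reflects there and hence downward. With this substitution for your homogeneity step the rest of your argument goes through as written.
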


\begin{proof}
  Let $\mathbb{S} = \mathbb{S}(\lambda, 2)$. Let $G$ be $\mathbb{S}$-generic over $V$ and, in $V[G]$, let $\mathcal{C} = 
  \langle \mathcal{C}_\alpha \mid \alpha < \lambda \rangle$ be the generically added $\square(\lambda, 2)$-sequence. Let $\mathbb{T} = 
  \mathbb{T}(\mathcal{C})$, and let $\mathbb{P}$ be an iteration of length $2^\lambda$, taken with supports of size $< \lambda$, destroying 
  the stationarity of all $\mathbb{T}$-fragile subsets of $S$. Let $H$ be $\mathbb{P}$-generic over $V[G]$. 
  We claim that $V[G\ast H]$ is the desired model.

  We first show that $\mathcal{C}$ remains a $\square(\lambda, 2)$-sequence in $V[G\ast H]$. Work in $V[G]$, and suppose for sake of contradiction that 
  there is $p \in H$ and a $\mathbb{P}$-name $\dot{t}$ such that $p \Vdash_{\mathbb{P}}``\dot{t}$ is a thread through $\mathcal{C}."$ By arguments as 
  in the proof of Theorem \ref{squareReflectionThm}, $\mathbb{P}^3$ is $\lambda$-distributive in $V[G]$. Let $H_0 \times H_1 \times H_2$ be 
  $\mathbb{P}^3$-generic over $V[G]$ with $(p,p,p) \in H_0 \times H_1 \times H_2$. For $i < 3$, let $t_i$ be the interpretation of $\dot{t}$ in 
  $V[G\ast H_i]$. In $V[G \ast (H_0 \times H_1 \times H_2)]$, let $\alpha \in \bigcap_{i < 3} \acc(t_i)$ be such that $t_0 \cap \alpha$, $t_1 \cap \alpha$, and 
  $t_2 \cap \alpha$ are pairwise distinct. Then $\{t_0 \cap \alpha, t_1 \cap \alpha, t_2 \cap \alpha\} \subseteq \mathcal{C}_\alpha$, contradicting 
  the fact that $|\mathcal{C}_\alpha| \leq 2$.

  We next show that (2) holds in $V[G\ast H]$. First note that, as $\mathbb{S} \ast \dot{\mathbb{P}} \ast \dot{\mathbb{T}}^2$ has a dense 
  $\lambda$-directed closed subset, $S$ remains stationary in $V^{\mathbb{S} \ast \dot{\mathbb{P}} \ast \dot{\mathbb{T}}^2}$. 
  In $V[G\ast H]$, let $T \subseteq S$ be such that $T$ and $S \setminus T$ are both stationary. Let $T_0 = T$ and $T_1 = S \setminus T$. We first 
  claim that there is $i < 2$ such that $\Vdash_{\mathbb{T}}``T_i$ is stationary$."$ To see this, suppose to the contrary that there are $t_0, t_1 \in \mathbb{T}$ 
  such that, for $i < 2$, $t_i \Vdash_{\mathbb{T}}``T_i$ is non-stationary$."$ Let $I_0 \times I_1$ be $\mathbb{T} \times \mathbb{T}$-generic over 
  $V[G\ast H]$ with $(t_0, t_1) \in I_0 \times I_1$. Then, in $V[G\ast H \ast (I_0 \times I_1)]$, $T_0$ and $T_1$ are both non-stationary, contradicting the fact 
  that $T_0 \cup T_1 = S$ and $S$ is stationary.

  Without loss of generality, suppose $\Vdash_{\mathbb{T}}``T$ is stationary$."$ Since $S \setminus T$ is not $\mathbb{T}$-fragile, there is 
  $t \in \mathbb{T}$ such that $t \Vdash_{\mathbb{T}}``S \setminus T$ is stationary$."$ Let $I$ be $\mathbb{T}$-generic over $V[G\ast H]$ with 
  $t \in I$. In $V[G\ast H\ast I]$, $T$ and $S \setminus T$ are both stationary and $\Refl(2, S)$ holds. Thus, $\{T, S \setminus T\}$ reflects 
  simultaneously in $V[G\ast H\ast I]$ and hence in $V[G\ast H]$ as well.
\end{proof}

\begin{thm}
  Suppose $\lambda$ is an uncountable, regular cardinal, $\kappa < \lambda$ is an infinite, regular cardinal, $S \subseteq \lambda$ is stationary, 
  and $\Refl^*( < \kappa, S)$ holds. Then there is a forcing extension preserving all cofinalities and cardinalities $\leq \lambda$ 
  in which $\square^{\mathrm{ind}}(\lambda, \kappa)$ and $\Refl(< \kappa, S)$ both hold.
\end{thm}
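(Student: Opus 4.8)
The plan is to follow the template of Theorem \ref{squareReflectionThm} and its $\square(\lambda,2)$ analogue, replacing the relevant poset by the indexed machinery of Section \ref{subsec: indexed square}. I would let $\mathbb{S} = \mathbb{S}^{\mathrm{ind}}(\lambda,\kappa)$, take $G$ to be $\mathbb{S}$-generic with generic sequence $\mathcal{C} = \langle C_{\alpha,i}\rangle$, set $\mathbb{T} = \bigoplus_{i<\kappa}\mathbb{T}_i(\mathcal{C})$ (the lottery sum of the threadings), and let $\mathbb{P}$ be the iteration of length $2^\lambda$, with supports of size $<\lambda$, destroying the stationarity of all $\mathbb{T}$-fragile subsets of $S$, exactly as in Lemma \ref{indexed_iteration_lemma}. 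With $H$ being $\mathbb{P}$-generic over $V[G]$, I claim $V[G*H]$ is the desired model; preservation of cardinals and cofinalities $\le\lambda$ follows from strong $\lambda$-properness of $\mathbb{S}*\mathbb{P}$ together with the $\lambda$-directed closed dense subsets provided by Lemma \ref{indexed_iteration_lemma}. To see that $\mathcal{C}$ remains a $\square^{\mathrm{ind}}(\lambda,\kappa)$ sequence, I argue as in Theorem \ref{squareReflectionThm}: using the lemma that clause (6) may be stated with a fixed index, any thread lives at a single index, so a hypothetical thread would give $p\in H$, a name $\dot D$, and a fixed $i<\kappa$ with $p\Vdash$ ``$\forall\alpha\in\acc(\dot D),\ \dot D\cap\alpha = \dot C_{\alpha,\check i}$''. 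Since $\mathbb{P}$ has a dense $\lambda$-directed closed subset in $V[G]^{\mathbb{T}_i}$ by Lemma \ref{indexed_iteration_lemma}, the product argument of Theorem \ref{squareReflectionThm} shows $\mathbb{P}\times\mathbb{P}$ is $\lambda$-distributive over $V[G]$, so $\lambda$ stays regular; forcing with $(p,p)$ then produces two threads $D_0\neq D_1$ at the \emph{same} index $i$, and at any common accumulation point $\alpha$ with $D_0\cap\alpha\neq D_1\cap\alpha$ one gets $D_0\cap\alpha = C_{\alpha,i} = D_1\cap\alpha$, a contradiction. The other clauses concern the fixed sequence $\mathcal{C}$ and survive because $\mathbb{P}$ adds no bounded subsets of $\lambda$.

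For $\Refl(<\kappa,S)$, fix in $V[G*H]$ a family $\{A_\eta\mid\eta<\mu\}$ of stationary subsets of $S$ with $\mu<\kappa$. Since $\mathbb{P}$ was chosen so that no stationary subset of $S$ is $\mathbb{T}$-fragile, for each $\eta$ there are $i_\eta<\kappa$ and a condition $t_\eta = C_{\beta_\eta,i_\eta}\in\mathbb{T}_{i_\eta}$ forcing $A_\eta$ to be stationary. Because the maps $\pi_{i,j}$ of Lemma \ref{lem: amalgamation for indexed square} are projections, stationarity is downward absolute along them, so $C_{\beta_\eta,j}\Vdash_{\mathbb{T}_j}$``$A_\eta$ is stationary'' for every $i_\eta\le j<\kappa$. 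I would then fix a limit $\gamma<\lambda$ above $\sup_\eta\beta_\eta$; by clause (5) together with $C_{\gamma,i}\subseteq C_{\gamma,j}$ for $i<j$, for each $\eta$ there is $i'_\eta\ge i_\eta$ with $\beta_\eta\in\acc(C_{\gamma,i'_\eta})$, and setting $j=\sup_\eta i'_\eta<\kappa$ (here is where $\kappa$ regular and $\mu<\kappa$ are used) gives $\beta_\eta\in\acc(C_{\gamma,j})$ for all $\eta$. Then $C_{\gamma,j}\le_{\mathbb{T}_j} C_{\beta_\eta,j}$ for every $\eta$, so the single condition $C_{\gamma,j}$ forces \emph{all} of the $A_\eta$ to remain stationary.

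Finally I would force with $\mathbb{T}_j$ below $C_{\gamma,j}$ to obtain a generic $I$, so that $V[G*H][I]=V^{\mathbb{S}*\mathbb{P}*\mathbb{T}_j}$. By the remark following Lemma \ref{strongProperIteration} this forcing is strongly $\lambda$-proper, and by Lemma \ref{indexed_iteration_lemma} it has a dense $\lambda$-directed closed subset; hence the equivalence established above between $\Refl^*(<\kappa,S)$ and indestructibility of $\Refl(<\kappa,S)$ under $\lambda$-directed closed, strongly $\lambda$-proper forcing yields $\Refl(<\kappa,S)$ in $V[G*H][I]$. Since every $A_\eta$ is stationary there, $\{A_\eta\mid\eta<\mu\}$ reflects simultaneously at some $\delta$; as $\cf(\delta)>\omega$ and ``$A_\eta\cap\delta$ is stationary in $\delta$'' are downward absolute to $V[G*H]$ (cofinalities $\le\lambda$ being preserved), the family reflects simultaneously in $V[G*H]$ as well, giving $\Refl(<\kappa,S)$.

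The main obstacle is precisely the step of producing a single threading forcing whose generic preserves the stationarity of all $\mu$ sets at once. Non-fragility only supplies, separately for each $\eta$, a condition in some $\mathbb{T}_{i_\eta}$ preserving $A_\eta$, and these conditions are in general pairwise incompatible, so one cannot simply take a common lower bound or meet their respective ``good'' open sets with one generic branch. The resolution, available exactly because $\kappa$ is regular and $\mu<\kappa$, is to pass first to a common index $j<\kappa$ via the projections of Lemma \ref{lem: amalgamation for indexed square} and then absorb all the $\beta_\eta$ as accumulation points of a single club $C_{\gamma,j}$ using clause (5); this is what renders the $\mu$ simultaneous preservation requirements compatible, and it is the place where the finiteness/width arguments of the previous theorems must be replaced by the indexed structure.
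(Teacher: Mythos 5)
Your proposal is correct and follows essentially the same route as the paper's proof: the same poset $\mathbb{S}^{\mathrm{ind}}(\lambda,\kappa)\ast\dot{\mathbb{P}}$ with $\mathbb{P}$ killing $\mathbb{T}$-fragile subsets of $S$ for the lottery sum $\mathbb{T}=\bigoplus_{i<\kappa}\mathbb{T}_i$, the same product argument for preservation of the indexed square, and the same key move of pushing the conditions $t_\eta$ through the projections $\pi_{i_\eta,j}$ to a common index and absorbing them below a single condition $C_{\gamma,j}$ before invoking the indestructibility of $\Refl(<\kappa,S)$ under $\lambda$-directed closed, strongly $\lambda$-proper forcing. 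The obstacle you flag at the end is exactly the one the paper's proof is built to overcome, and your resolution is the paper's.
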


\begin{proof}
  Let $\mathbb{S} = \mathbb{S}^{\mathrm{ind}}(\lambda, \kappa)$. Let $G$ be $\mathbb{S}$-generic over $V$ and, in $V[G]$, let $\mathcal{C} = 
  \langle C_{\alpha, i} \mid \alpha < \lambda, i(\alpha) \leq i < \kappa \rangle$ be the generically-added $\square^{\mathrm{ind}}(\lambda, \kappa)$-sequence. For $i < \kappa$, let $\mathbb{T}_i = \mathbb{T}_i(\mathcal{C})$, and let $\mathbb{T} = \bigoplus_{i < \kappa} \mathbb{T}_i$. In $V[G]$, let $\mathbb{P}$ be a forcing iteration of 
  length $2^\lambda$, taken with supports of size $< \lambda$, destroying the stationarity of all $\mathbb{T}$-fragile subsets of $S$. Let $H$ be $\mathbb{P}$-generic 
  over $V[G]$. We claim that $V[G\ast H]$ is the desired model.

  We first show that $\mathcal{C}$ remains a $\square^{\mathrm{ind}}(\lambda, \kappa)$-sequence in $V[G\ast H]$. Work in $V[G]$ and suppose to the contrary 
  that there is $p \in H$, $i < \kappa$, and $\dot{D}$ such that $p \Vdash_{\mathbb{P}} ``\dot{D}$ is a club in $\check\lambda$ and, for all $\alpha \in \acc(\dot{D})$, 
  $\dot{D} \cap \alpha = \check{C}_{\alpha, i}."$ By Lemma \ref{indexed_iteration_lemma}, $\mathbb{P} \times \mathbb{P}$ is $\lambda$-distributive in $V[G]$. Let $H_0 \times H_1$ be 
  $\mathbb{P} \times \mathbb{P}$-generic over $V[G]$ with $(p,p) \in H_0 \times H_1$. For $i < 2$, let $D_i$ be the interpretation of $\dot{D}$ in $V[G\ast H_i]$. 
  In $V[G\ast (H_0 \times H_1)]$, find $\alpha \in \acc(D_0) \cap \acc(D_1)$ such that $D_0 \cap \alpha \neq D_1 \cap \alpha$. Then $C_{\alpha, i} = D_0 \cap \alpha 
  \neq D_1 \cap \alpha = C_{\alpha, i}$, which is a contradiction.

  We next show that $\Refl(< \kappa, S)$ holds in $V[G\ast H]$. Thus, let $\mu < \kappa$, and let $\{S_\eta \mid \eta < \mu \}$ be a family of stationary subsets 
  of $S$. For all $\eta < \mu$, $S_\eta$ is not $\mathbb{T}$-fragile, so there is $i_\eta < \kappa$ and $t_\eta = C_{\gamma_\eta, i_\eta} \in \mathbb{T}_{i_\eta}$ such that 
  $t_\eta \Vdash_{\mathbb{T}_{i_\eta}}``\check{S_\eta}$ is stationary$."$ Fix a limit ordinal $\gamma^* < \lambda$ such that $\sup(\{\gamma_\eta \mid \eta < \mu\}) < \gamma^*$.
  Fix $i^* < \kappa$ such that $\sup(\{i_\eta \mid \eta < \mu\}) < i^*$ and, for all $\eta < \mu$, $\gamma_\eta \in \acc(C_{\gamma^*, i^*})$. Recall that, for $i < i^*$, 
  the function $\pi_{i, i^*} : \mathbb{T}_i \rightarrow \mathbb{T}_{i^*}$ sending $C_{\gamma, i} \in \mathbb{T}_i$ to $C_{\gamma, i^*} \in \mathbb{T}_{i^*}$ is a projection. 
  Thus, for all $\eta < \mu$, $\pi_{i_\eta, i^*}(t_\eta) \Vdash_{\mathbb{T}_{i^*}}``\check{S_\eta}$ is stationary$."$ 
  Let $t^* = C_{\gamma^*, i^*}$ and note that $t^* \in \mathbb{T}_{i^*}$ and, for all $\eta < \mu$, $t^* \leq \pi_{i_\eta, i^*}(t_\eta)$. 
  Thus, for all $\eta < \mu$, $t^* \Vdash_{\mathbb{T}_{i^*}} `` \check{S_\eta}$ is stationary$."$ Let $I$ be $\mathbb{T}_{i^*}$-generic over $V[G\ast H]$ with $t^* \in I$. Then, 
  in $V[G\ast H\ast I]$, $\Refl(< \kappa, S)$ holds and $\{S_\eta \mid \eta < \mu\}$ is a collection of stationary subsets of $S$. Thus, $\{S_\eta \mid \eta < \mu \}$ 
  reflects simultaneously in $V[G\ast H\ast I]$ and therefore also in $V[G\ast H]$.
\end{proof}

Examples of results that can be obtained by combining these theorems with the results from Subsection 
\ref{subsec: indestructible stationary reflection} include the following.

\begin{cor} \label{aleph_2_cor}
  Suppose there is a weakly compact cardinal.
  \begin{enumerate}
	\item{There is a forcing extension in which $\square(\aleph_2)$ 
	    and $\Refl(S^{\aleph_2}_{\aleph_0})$ both hold.}
	  \item{There is a forcing extension in which $\square(\aleph_2, 2)$ holds 
	      and, for all stationary $S \subseteq S^{\aleph_2}_{\aleph_0}$ such that 
	      $S^{\aleph_2}_{\aleph_0} \setminus S$ is stationary, $\{S, S^{\aleph_2}_{\aleph_0} \setminus 
	    S\}$ reflects simultaneously.}
	  \item{If $i < 2$, there is a forcing extension in which $\square^{\mathrm{ind}}(\aleph_2, \aleph_i)$ 
	    and $\Refl(< \aleph_i, S^{\aleph_2}_{\aleph_0})$ both hold.}
  \end{enumerate}
\end{cor}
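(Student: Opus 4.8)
The plan is to obtain all three items as direct instances of the three consistency theorems of this section, after first manufacturing, from a weakly compact cardinal, a single model in which the requisite indestructible reflection holds. Concretely, I would fix a weakly compact cardinal $\kappa$ with GCH and apply Theorem \ref{thm:reflection at omega_2} to pass to a model $W$ in which $\kappa = \aleph_2$ and $\Refl^*(\aleph_1, S^{\aleph_2}_{\aleph_0})$ holds. Since the iteration $\mathbb{L}$ of that theorem is $\kappa$-c.c.\ of size $\kappa$ and GCH holds below $\kappa$, a nice-name count shows $2^{\aleph_1} = \aleph_2$, hence $\aleph_2^{<\aleph_2} = \aleph_2$ in $W$; this is the cardinal-arithmetic hypothesis common to all three consistency theorems.

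The engine of the argument is that $\Refl^*(<\kappa', S)$ is downward monotone in $\kappa'$: a family of stationary sets of size $<\kappa'$ is, a fortiori, a family of size $<\kappa''$ whenever $\kappa' \leq \kappa''$. By the notational conventions, $\Refl^*(\aleph_1, S^{\aleph_2}_{\aleph_0})$ is literally $\Refl^*(<\aleph_2, S^{\aleph_2}_{\aleph_0})$. Unwinding the conventions once more ($\Refl^*(S) = \Refl^*(<2, S)$, $\Refl^*(2, S) = \Refl^*(<3, S)$), monotonicity yields, in $W$, all of $\Refl^*(S^{\aleph_2}_{\aleph_0})$, $\Refl^*(2, S^{\aleph_2}_{\aleph_0})$, and $\Refl^*(<\aleph_i, S^{\aleph_2}_{\aleph_0})$ for each $i < 2$. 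Each of these is precisely the reflection hypothesis of one of the consistency theorems, instantiated with $\lambda = \aleph_2$ and $S = S^{\aleph_2}_{\aleph_0}$ (and, for item (3), $\kappa = \aleph_i$, which is an infinite regular cardinal below $\aleph_2$ as required).

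It then remains only to invoke the appropriate theorem in $W$ for each item. For (1) I would apply Theorem \ref{squareReflectionThm}, producing a cofinality- and cardinality-preserving (below $\aleph_2$) extension of $W$ in which $\square(\aleph_2)$ and $\Refl(S^{\aleph_2}_{\aleph_0})$ both hold. For (2) I would apply the theorem producing $\square(\lambda, 2)$, whose conclusion gives exactly simultaneous reflection of $\{T, S^{\aleph_2}_{\aleph_0}\setminus T\}$ for every partition into two stationary pieces. For (3) I would apply, for each $i < 2$, the theorem immediately preceding this corollary, obtaining $\square^{\mathrm{ind}}(\aleph_2, \aleph_i)$ together with $\Refl(<\aleph_i, S^{\aleph_2}_{\aleph_0})$. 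In every case the forcing preserves all cofinalities and cardinalities $\leq \aleph_2$, so $S^{\aleph_2}_{\aleph_0}$ is computed identically and remains stationary in the final model, and the stated square and reflection principles hold there.

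I do not expect a substantive obstacle here, as each conclusion is an immediate application of an already-proved theorem; the corollary is purely a matter of assembling them. The only points demanding care are verifying that $\aleph_2^{<\aleph_2} = \aleph_2$ genuinely holds in $W$ (where the $\kappa$-c.c.\ of $\mathbb{L}$ and ground-model GCH are used) and correctly matching the numerical conventions so that downward monotonicity of $\Refl^*$ delivers exactly the hypothesis each theorem demands.
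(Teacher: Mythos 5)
Your proposal is correct and is exactly the argument the paper intends: the corollary is stated without proof as an instance of "combining these theorems with the results from Subsection \ref{subsec: indestructible stationary reflection}," i.e., force $\Refl^*(\aleph_1, S^{\aleph_2}_{\aleph_0})$ from the weakly compact via Theorem \ref{thm:reflection at omega_2}, note $\aleph_2^{<\aleph_2}=\aleph_2$ and the downward monotonicity of $\Refl^*$, and then apply the three consistency theorems of Section \ref{sec:consistency results} with $\lambda=\aleph_2$ and $S=S^{\aleph_2}_{\aleph_0}$. Your attention to the notational unwinding ($\Refl^*(\aleph_1,\cdot)=\Refl^*(<\aleph_2,\cdot)$) and to verifying the cardinal-arithmetic hypothesis is exactly the right bookkeeping.
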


\begin{rem}
  We note that Clause (1) of Corollary~\ref{aleph_2_cor} already follows from the results of \cite{harrington_shelah} 
  and requires only a Mahlo cardinal. In particular, Harrington and Shelah prove in \cite{harrington_shelah} 
  that, starting from a model in which there is a Mahlo cardinal $\kappa$, there is a forcing extension in 
  which $\kappa = \aleph_2$ and $S^{\aleph_2}_{\aleph_0}$ holds. If this forcing is done using a cardinal 
  $\kappa$ that is Mahlo but is not weakly compact in $L$, then $\square(\aleph_2)$ will hold in 
  the forcing extension.
\end{rem}

\begin{cor}
  Suppose there are infinitely many supercompact cardinals.
  \begin{enumerate}
	\item{There is a forcing extension in which $\square(\aleph_{\omega + 1})$ 
	  and $\Refl(\aleph_{\omega + 1})$ both hold.}
	\item{There is a forcing extension in which $\square(\aleph_{\omega + 1}, 2)$ holds 
	  and, for all $n < \omega$ and all stationary $S \subseteq S^{\aleph_{\omega + 1}}_{\aleph_n}$ 
	  such that $S^{\aleph_{\omega+1}}_{\aleph_n} \setminus S$ is stationary, 
	  $\{S, S^{\aleph_{\omega + 1}}_{\aleph_n} \setminus S\}$ reflects simultaneously.}
	\item{If $m < \omega$, there is a forcing extension in which $\square^{\mathrm{ind}}(\aleph_{\omega + 1}, \aleph_m)$ 
	  holds and, for all $n < \omega$, $\Refl(< \aleph_m, S^{\aleph_{\omega + 1}}_{\leq \aleph_n})$ holds.}
  \end{enumerate}
\end{cor}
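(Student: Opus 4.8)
The plan is to run the three consistency theorems of this section over a ground model furnished by Theorem~\ref{thm:indestructible singular successor reflection}. From the consistency of infinitely many supercompact cardinals, that theorem yields a model $V$ in which $\Refl^*(<\aleph_\omega, S^{\aleph_{\omega+1}}_{\leq\aleph_n})$ holds for every $n<\omega$; we may further assume the construction gives $2^{\aleph_\omega}=\aleph_{\omega+1}$, so that $\lambda:=\aleph_{\omega+1}$ satisfies $\lambda^{<\lambda}=\lambda$, the standing hypothesis of the three theorems. Two elementary facts about $\lambda$ will be used throughout. First, since $\lambda$ is the successor of a singular cardinal of cofinality $\omega$, every ordinal below $\lambda$ has cofinality $\aleph_n$ for some $n<\omega$, and, as the nonstationary ideal on $\lambda$ is $\lambda$-complete, every stationary $T\subseteq\lambda$ has $T\cap S^\lambda_{\aleph_n}$ stationary for some $n$. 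Second, all forcings used below are $\lambda$-directed closed (or have dense such subsets), hence $<\lambda$-distributive, so this cofinality analysis is unchanged in the extensions.

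For part~(1) I would first check that $\Refl^*(\aleph_{\omega+1})$ holds in $V$: if $\mathbb{P}$ is $\lambda$-directed closed of size $\leq\lambda$ and $T\subseteq\lambda$ is stationary in $V^{\mathbb{P}}$, I concentrate $T$ on some $S^\lambda_{\aleph_n}$ and apply $\Refl(<\aleph_\omega,S^\lambda_{\leq\aleph_n})$, which holds in $V^{\mathbb{P}}$ by the assumed indestructibility; reflection of $T\cap S^\lambda_{\aleph_n}$ is reflection of $T$. Theorem~\ref{squareReflectionThm} with $S=\lambda$ then gives the desired extension. Part~(2) is the same idea for pairs: any two stationary subsets of $\lambda$ concentrate on at most two cofinalities $\aleph_{n_0},\aleph_{n_1}$, so $\Refl(2,S^\lambda_{\leq\max(\aleph_{n_0},\aleph_{n_1})})$ yields $\Refl^*(2,\aleph_{\omega+1})$ in $V$, and the theorem producing $\square(\lambda,2)$ applies with $S=\lambda$. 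That forcing destroys every $\mathbb{T}$-fragile subset of $\lambda$, and its proof applies verbatim to any stationary set preserved by $\mathbb{T}^2$: given a partition of the canonical (hence $\mathbb{T}^2$-indestructible) set $S^\lambda_{\aleph_n}$ into stationary $T_0,T_1$, one piece is forced stationary by every condition of $\mathbb{T}$ while the other is non-fragile, so below a common condition both are stationary, and $\Refl(2,\lambda)$ in the threaded extension yields simultaneous reflection of $\{T_0,T_1\}$. This is exactly clause~(2).

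Part~(3) is where the real obstacle appears, and it is the reason the statement is phrased one cofinality at a time. One cannot apply the $\square^{\mathrm{ind}}(\lambda,\kappa)$-theorem with $S=\lambda$, because $\Refl^*(<\aleph_m,\aleph_{\omega+1})$ fails as soon as $m\geq1$: the collection $\{S^\lambda_{\aleph_n}\mid n<\omega\}$ has size $\aleph_0<\aleph_m$, yet it cannot reflect simultaneously at any $\delta<\lambda$, since that would force $\cf(\delta)\geq\aleph_{\omega+1}$. I would instead force with $\mathbb{S}^{\mathrm{ind}}(\lambda,\aleph_m)\ast\dot{\mathbb{P}}$, where $\mathbb{P}$ destroys the stationarity of \emph{all} $\mathbb{T}$-fragile subsets of $\lambda$ rather than of a single target set. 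As in that theorem, $\mathbb{P}\times\mathbb{P}$ is $\lambda$-distributive, so no thread is added and $\square^{\mathrm{ind}}(\aleph_{\omega+1},\aleph_m)$ is preserved. The reflection clause is then handled separately for each $n$: given $\mu<\aleph_m$ stationary subsets of $S^\lambda_{\leq\aleph_n}$, each is a non-fragile subset of $\lambda$, so using the projections $\pi_{i,i^*}$ of Lemma~\ref{lem: amalgamation for indexed square} I amalgamate the individual threading conditions into a single $t^*\in\mathbb{T}_{i^*}$ forcing all of them stationary at once; forcing below $t^*$ and applying $\Refl(<\aleph_m,S^\lambda_{\leq\aleph_n})$ in that threaded extension gives simultaneous reflection, which descends to the final model.

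The step I expect to require the most care is justifying that $\Refl(<\aleph_m,S^\lambda_{\leq\aleph_n})$ holds in the threaded extension $V[G\ast H\ast I]$. This follows because $\mathbb{S}^{\mathrm{ind}}(\lambda,\aleph_m)\ast\dot{\mathbb{P}}\ast\dot{\mathbb{T}}_{i^*}$ is $\lambda$-directed closed and strongly $\lambda$-proper (by the argument of Lemma~\ref{strongProperIteration} and the remark following it), so the equivalence between $\Refl^*(<\aleph_m,S^\lambda_{\leq\aleph_n})$ and indestructibility of $\Refl(<\aleph_m,S^\lambda_{\leq\aleph_n})$ under such forcing transports the ground-model hypothesis into the extension. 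Everything else is bookkeeping: the single forcing $\mathbb{S}^{\mathrm{ind}}(\lambda,\aleph_m)\ast\dot{\mathbb{P}}$ achieves the conclusion for all $n<\omega$ at once, precisely because $\mathbb{P}$ was chosen to kill every fragile subset of $\lambda$.
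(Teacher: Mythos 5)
Your proposal is correct and is exactly the combination the paper intends: the corollary is stated without proof as an application of the three theorems of Section 4 to the ground model of Theorem \ref{thm:indestructible singular successor reflection}, and you have executed that combination faithfully, including the one genuinely delicate point (that for parts (2) and (3) one cannot take $S=\lambda$, since $\Refl^*(<\aleph_m,\aleph_{\omega+1})$ fails for $m\geq 1$, and must instead kill all $\mathbb{T}$-fragile subsets of $\lambda$ and then localize the reflection argument to each $S^{\lambda}_{\leq\aleph_n}$ separately). No gaps.
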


\section{Open questions}

\begin{question} \label{q1}
Let $\kappa < \lambda$ be uncountable cardinals, with $\lambda$ regular, and suppose $\square(\lambda, < \kappa)$ holds. Must it be the case that $\Refl(< \kappa, S)$ fails for every stationary $S \subseteq \kappa$?
\end{question}

A case of particular interest is the following.

\begin{question} \label{q1.5}
  Can $\square(\omega_2, \omega)$ and $\Refl(\aleph_0, S^{\omega_2}_\omega)$ consistently hold simultaneously? 
\end{question}

\begin{question} \label{q2}
  Let $\kappa < \lambda$ be uncountable cardinals, with $\lambda$ regular, and suppose $\square(\lambda, < \kappa)$ holds. Must there be a full 
  $\square(\lambda, < \kappa)$-sequence?
\end{question}

Note that a positive answer to Question \ref{q2} would imply a positive answer to Question \ref{q1} (and hence a negative answer to Question \ref{q1.5}).

\begin{question} \label{q3}
  Is there an analogue of Theorem \ref{fullReflectionThm} dealing with $\square(\mu^+, < \mu)$ when $\mu$ is singular?
\end{question}

\providecommand{\bysame}{\leavevmode\hbox to3em{\hrulefill}\thinspace}
\providecommand{\MR}{\relax\ifhmode\unskip\space\fi MR }
\providecommand{\MRhref}[2]{%
  \href{http://www.ams.org/mathscinet-getitem?mr=#1}{#2}
}
\providecommand{\href}[2]{#2}

\end{document}